\newtheorem{thm}{Theorem}[section]
\newtheorem{cor}[thm]{Corollary}
\newtheorem{lem}[thm]{Lemma}
\newtheorem{prop}[thm]{Proposition}
\theoremstyle{definition}
\newtheorem{defn}[thm]{Definition}
\theoremstyle{remark}
\numberwithin{equation}{section}
\newcommand{\R}{\mathbb R}
\newcommand{\eps}{\varepsilon}
\newcommand{\p}{\partial}
\newcommand{\comment}[1]{}
\begin{document}

\title{Non-local minimal surfaces}%
\author{L. Caffarelli}
\address{Department of Mathematics,
University of Texas at Austin, Austin, TX 78712, USA } \email{\tt
caffarel@math.utexas.edu}
   \author{J.-M. Roquejoffre}
   \address{Institut de Math\'ematiques (UMR CNRS 5219), Universit\'e Paul Sabatier, 31062 Toulouse Cedex 4, France}
   \email{\tt roque@mip.ups-tlse.fr}\footnote{J.-M. Roquejoffre was
supported by the Institut Universitaire de France.}

    \author{O. Savin}\address{Mathematics Department, Columbia University, 2990 Broadway, New York, NY 10027, USA}
\email{\tt savin@math.columbia.edu} \footnote{O. Savin was
supported by N.S.F. Grant DMS-07-01037 and a Sloan Fellowship.}

\maketitle

\section{\bf{Introduction}}

In this paper we study the geometric properties, existence,
regularity and related issues for a family of surfaces which are
boundaries of sets minimizing certain integral norms. These
surfaces can be interpreted as a non-infinitesimal version of
classical minimal surfaces.

Our work is motivated by the structure of interphases that arise
in classical phase field models when very long space correlations
are present. Motion by mean curvature is obtained classically in
two different ways. One way is as an asymptotic limit of phase
field models involving a double well potential, that is as the
steepest descent of the Ginzburg-Landau energy functional
$$\varepsilon \int |\nabla u|^2 dx +\frac{1}{\eps} \int F(u)dx.$$

Another way is as a continuous limit of the following process
(cellular automata, see \cite{MBO}). Denote by $\chi_\Omega$ the
characteristic function of the set $\Omega$ and by
$\mathcal{C}\Omega$ the complement of $\Omega$. The surface
$S_{k+1}=\partial \Omega_{k+1}$ at time $t_{k+1}=t_k + \delta$ is
generated from $S_k=\partial \Omega_k$  by solving the heat
equation
$$u_t-\triangle u=0, \quad u(\cdot,0)=u_k,$$ for a small
interval of time $\varepsilon$, with initial data
$$u_k:=\chi_{\Omega_k}-\chi_{\mathcal C \Omega_k}.$$
Thus $u(x,\eps)$ is obtained by simply convolving $u_k$ with the
Gauss kernel
$$G_\eps(x)=(4\pi \eps)^{-\frac{n}{2}} e^{-\frac{|x|^2}{4
\eps}},$$ and define $$\Omega_{k+1}=\{u(x,\varepsilon)>0\}, \quad
S_{k+1}=\partial \Omega_{k+1}.$$ If $\delta \sim \eps^2$, $S_k$ is
a discrete approximation to motion by mean curvature at time $k
\delta$ (see \cite{E}). This can be thought as letting the two
phases $\Omega$ and $\mathcal{C} \Omega$ mix for a short time
$\eps$ and then segregate them according to density.

One example of long range correlation would consist in replacing
the heat equation by a pure jump Levy process. The simplest and
``more analytical family" of such processes is of course diffusion
by fractional Laplace $(-\triangle)^\sigma$, $0<\sigma<1$. In this
case we replace the gaussian above with the fundamental solution
of
$$ u_t+(-\triangle )^\sigma u=0$$
which is of the form

$$G(x,t) \sim \frac{t}{(|
x|^2+t^\frac{1}{\sigma})^\frac{n+ 2\sigma}{2}}.$$

If $\sigma \ge 1/2$ the process still converges to motion by mean
curvature by taking the time step $\eps \sim \delta^{2\sigma}$ for
$\sigma>1/2$ and $\eps \sim \delta \log \delta$ for $\sigma=1/2$.
When $\sigma <1/2$ the limiting model corresponds now to a
non-local surface diffusion (see \cite{CSo}). The normal velocity
at a point $x_0 \in S$ satisfies
$$ v(x_0) \sim \int_{\mathbb{R}^n} (\chi_\Omega(x)-\chi_{\mathcal C \Omega}(x))|x-x_0|^{-n-2\sigma}dx.$$

Going back to the phase field model, the $(-\triangle) ^\sigma$
diffusion corresponds to the steepest descent for the energy

$$ (1-\sigma) \int\int\frac{(u(x)-u(y))^2}{|x-y|^{n+2 \sigma}}dxdy + \int F(u)dx,$$
that is the diffusion part of the energy is now the ``$\sigma$
fractional derivative" of $u$ or the $H^\sigma$ seminorm of $u$.

There is an extensive literature on the asymptotics for this
problem (see for example \cite{I, IPS, Sl}) but most mathematical
results involve the hypothesis of finite first moments for the
diffusion kernel, and that implies that the resulting interphase
dynamics is still infinitesimal ($\sigma>1/2$ in our discussion
above).

In this paper we intend to study the ``minimal surfaces" arising
from the cases in which the surface evolution is non-local
($\sigma<1/2$), i.e. surfaces $S=\partial \Omega$ whose
Euler-Lagrange equation is
$$ \int(\chi_\Omega(y)-\chi_{\mathcal {C}\Omega}(y))|y-x|^{-n-2 \sigma}dy=0 \quad \quad \mbox{for $x\in S$}.$$

Surprisingly such surfaces can be attained by minimizing the
$H^\sigma$ norm of the indicator function $\chi_\Omega$.
Precisely, for $\sigma<1/2$ and $\Omega$ reasonably smooth,
$\|\chi_\Omega\|_{H^\sigma}$ becomes finite whereas for
$\sigma=1/2$ this is not true, i.e. we can not obtain classical
minimal surfaces as sets minimizing an $H^\sigma$ norm.

The main result of this paper is that $S$ is
a smooth hypersurface except for a closed singular set of
$\mathcal {H}^{n-2}$ Hausdorff dimension. This parallels the classical
minimal surface theory (the reader may find it useful to have it in 
mind, see for example \cite{G}), except that we do 
not have in this paper the optimal dimension (in the classical
minimal surface theory it is $n-8$). 

Our main steps are

a) existence of minimizers and uniform positive density of
$\Omega$ and $\mathcal {C}\Omega$

b) The Euler-Lagrange equation in the viscosity sense

c) Flatness implies $C^{1, \alpha}$ regularity

d) A monotonicity formula and existence of tangent cones

e) Existence of an ``energy gap" between minimal cones and hyperplanes

\

\section{\bf{Definitions, notations and main result}}

As pointed out above, we will consider minimizers of the $H^\sigma$ seminorm, $\sigma< 1/2$, of the
characteristic function $\chi_E$ of a set $E$ which is fixed
outside a domain $\Omega \subset  \mathbb{R}^n$,

\

$$\|\chi_E\|_{H^\sigma}^2=\int \int
\frac{|\chi_E(x)-\chi_E(y)|^2}{|x-y|^{n+2\sigma}}dxdy$$

\

$$=2\int \int
\frac{ \chi_E(x) \chi_{\mathcal{C}E}(y)}{|x-y|^{n+2\sigma}}dxdy,$$
where $\mathcal{C}E$ denotes the complement of $E$.

 We denote for simplicity
$$s:= 2 \sigma, \quad 0<s<1,$$ and

$$L(A,B):=\int \int \frac{1}{|x-y|^{n+s}}\chi_A(x) \chi_{B}(y)dxdy.$$

\

Clearly $$L(A,B) \ge 0, \quad L(A,B)=L(B,A),$$

$$L(A_1 \cup A_2, B)=L(A_1, B)+L(A_2,B) \quad \mbox{for $A_1 \cap A_2= \emptyset$}.$$

\

\begin{defn}{\label{2.1}} {\it(Local energy integral)} For a bounded set $\Omega$, and for $E \subset
\mathbb{R}^n$ we define

$$\mathcal{J}_\Omega(E):=L(E \cap \Omega, \mathcal
{C} E)+ L( E \setminus \Omega,\mathcal {C} E \cap \Omega)$$ to be
the ``$\Omega$-contribution" for the $H^{s/2}$-norm of the
characteristic function of $E$.
\end{defn}

\

\begin{defn} We say that $E$ is a minimizer for $\mathcal {J}$ in
$\Omega$ if for any set $F$ with $F \cap (\mathcal {C} \Omega)=E
\cap (\mathcal {C} \Omega)$ we have

$$\mathcal{J}_\Omega(E) \le \mathcal{J}_\Omega(F).$$
\end{defn}

\

{\it Remarks.} The set $E \cap (\mathcal {C} \Omega)$ plays the
role of ``boundary data" for $E \cap \Omega$.

If $\Omega$ is a bounded Lipschitz domain, then $inf
\mathcal{J}_\Omega$ is bounded by $\mathcal{J}_\Omega(E \setminus \Omega)< \infty$.

\

A minimizer $E$ of $\mathcal{J}_\Omega$ satisfies the following
two conditions

\begin{equation}{\label{sup}}
 L(A,E)-L(A,\mathcal{C}(E \cup A)) \le 0  \quad \quad \mbox{ if
$A \subset \mathcal{C} E \cap \Omega$}
\end{equation}

\begin{equation}{\label{sub}}
 L(A, E \setminus A)-L(A,\mathcal{C}E) \ge 0  \quad \quad \mbox{if $A \subset E\cap \Omega$}.
\end{equation}

Notice that the term $L(A,E)$ in (\ref{sup}) is finite since it is
bounded by $\mathcal{J}_\Omega(E)$.

\begin{defn}
If $E$ satisfies (\ref{sup}) we say that $E$ is a variational
supersolution and if it satisfies (\ref{sub}) we say that $E$ is a
(variational) subsolution.
\end{defn}

If $E$ is both a variational subsolution and supersolution then it
is a minimizer for $\mathcal{J}_\Omega$. Indeed, if $F \cap
(\mathcal {C} \Omega)=E \cap (\mathcal {C} \Omega)$ and we denote
by $A^+=F \setminus E$, $A^-=E \setminus F$ we find

$$\mathcal{J}_\Omega(F)-\mathcal{J}_\Omega(E)=[L(A^-, E \setminus A^-)-L(A^-,\mathcal{C}E)]-$$

$$
[L(A^+,E)-L(A^+,\mathcal{C}(E \cup A^+))]+ 2L(A^-,A^+)\ge 0.$$

The main result of this paper can now be formulated as follows.

\begin{thm}\label{t0.1}{\bf Main theorem}

If $E$ minimizes $J_{B_1}$, then $\partial E\cap B_{1/2}$ is, to the possible exception of a
closed set of finite
${\mathcal H}^{n-2}$ dimension, a $C^{1,\alpha}$ hypersurface around each of its points.

\end{thm}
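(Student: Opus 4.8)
The plan is to mimic the De Giorgi–Federer regularity program for classical minimal surfaces, exploiting the five ingredients listed as (a)–(e) in the introduction. The architecture is: (i) a $C^{1,\alpha}$ "flatness implies regularity" theorem, reducing everything to showing that near $\mathcal H^{n-2}$-almost every point the boundary is flat; (ii) a monotonicity formula producing, via a blow-up, tangent cones that are minimizers of the same functional and are homogeneous (i.e.\ actual cones); (iii) a dimension reduction argument à la Federer, which shows that the singular set cannot have dimension exceeding $n-2$ unless there exists a nontrivial singular minimizing cone in $\mathbb{R}^{n-2}$; and (iv) an "energy gap"/classification statement ruling out such cones in low dimension, so that the Federer machine bottoms out at dimension $n-2$ rather than $n-1$.

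\medskip

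In more detail, I would proceed as follows. First, use step (a) to obtain a minimizer $E$ of $\mathcal J_{B_1}$ with uniform positive density estimates for both $E$ and $\mathcal C E$ at every boundary point of $\partial E\cap B_{1/2}$; this gives, as in the classical theory, that $\partial E$ has locally finite $\mathcal H^{n-1}$ measure and no "hidden" boundary, and lets us pass to $L^1_{loc}$ limits of minimizers with the support of the boundaries converging in Hausdorff distance. Second, invoke step (c): there is $\eps_0>0$ so that if $\partial E\cap B_1$ lies in a slab $\{|x\cdot\nu|<\eps_0\}$ then $\partial E\cap B_{1/2}$ is a $C^{1,\alpha}$ graph. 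Third, use the monotonicity formula of step (d): the rescaled energy $r^{-(n-s)}$ (appropriately normalized) of $E$ in $B_r(x_0)$ is monotone in $r$ for $x_0\in\partial E$, hence has a limit; compactness of minimizers then yields a blow-up limit $E_\infty$ which, because the monotone quantity is constant, is a cone, and is itself a minimizer. Define the singular set $\Sigma(E)=\{x_0\in\partial E\cap B_{1/2}: \partial E \text{ is not } C^{1,\alpha}\text{ near }x_0\}$; by the flatness theorem, $x_0\in\Sigma$ forces every tangent cone at $x_0$ to be genuinely singular (no tangent cone is a halfspace), since a halfspace tangent cone plus the density/compactness bound would force flatness at small scales.

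\medskip

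Fourth — and this is the Federer dimension-reduction step — one shows $\dim_{\mathcal H}\Sigma(E)\le n-k^\*$, where $k^\*$ is the smallest dimension in which a singular minimizing cone exists; the argument is the standard one: if $\Sigma$ had larger dimension, blowing up at a point of positive $\mathcal H^{n-k^\*+1}$-density of $\Sigma$ would produce, by iterating the cone construction, a minimizing cone in $\mathbb{R}^{n-k^\*+1}$ (or lower) whose singular set still has positive dimension, contradicting minimality of $k^\*$ (and, in the base case, contradicting the halfspace rigidity in $\mathbb R^1$). Fifth, we must establish the "energy gap" of step (e): there is no singular minimizing cone in $\mathbb R^1$ (trivial) and, more substantively, minimizing cones are rigid enough that the first admissible singular dimension is $\ge 2$ above the line, i.e.\ $k^\*\ge 2$; this is what the energy-gap statement between minimal cones and hyperplanes buys us, and plugging $k^\*\ge 2$ into the dimension reduction gives $\dim_{\mathcal H}\Sigma\le n-2$. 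Finally one upgrades "$\le n-2$" to "closed set of finite $\mathcal H^{n-2}$ measure" by the usual covering argument combined with the uniform density bounds.

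\medskip

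The main obstacle, I expect, is step (d): setting up a clean monotonicity formula for this nonlocal functional. Unlike the classical perimeter, $\mathcal J_\Omega$ is genuinely nonlocal, so the energy in $B_r$ does not decouple from the configuration outside, and the natural monotone quantity must be built from the extension characterization of the $H^{s/2}$ seminorm (the Caffarelli–Silvestre-type $\sigma$-harmonic extension to $\mathbb{R}^{n+1}_+$ with weight $y^{1-s}$), where a localized energy on half-balls can be shown monotone after the correct scaling. Making the blow-up limit a genuine cone, and ensuring the limiting object is still a minimizer of the nonlocal energy (so that the inductive cone-reduction closes up), are the delicate points; the flatness theorem (c) and the density estimates (a) I would treat as given by the earlier sections and use essentially verbatim as in the De Giorgi–Reifenberg–Federer scheme.
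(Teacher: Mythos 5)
Your proposal follows essentially the same De Giorgi--Federer architecture as the paper: density estimates, improvement of flatness, a monotonicity formula built on the weighted extension to $\mathbb{R}^{n+1}_+$, blow-up along a compactness argument to homogeneous minimizing cones, the energy gap, and Federer dimension reduction bottoming out at the absence of singular minimal cones in $\mathbb{R}^1$, which yields the $n-2$ bound. The only slight misattributions --- asserting locally finite $\mathcal{H}^{n-1}$ measure of $\partial E$ (the paper establishes only $\mathcal{H}^{n-s}$ finiteness at that stage, and neither is needed) and assigning to the energy gap the job of proving that the first singular dimension is at least $2$ (in the paper that comes from the trivial one-dimensional base case, while the gap serves to stabilize regular points and close the compactness arguments) --- do not affect the correctness of the scheme.
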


\

\section{{\bf Existence and compactness of minimizers}}

\

In this section we prove some basic properties of minimizers.

\begin{prop}{\bf Lower semicontinuity of $\mathcal{J}$}

If $\chi_{E_n} \to \chi_E$ in $L^1_{loc}$ then

$$\liminf J_\Omega(E_n) \ge J_\Omega(E).$$
\end{prop}

\

{\it Proof:} Recall that
$$L(A,B)=\int\int \frac{1}{|x-y|^{n+s}} \chi_A(x)\chi_B(y)dxdy.$$

It is clear that if $\chi_{A_n} \to \chi_A$ , $\chi_{B_n} \to
\chi_B$ in $L^1_{loc}(\mathbb{R}^n)$ then any sequence contains a
subsequence, say $n_k$ such that for a.e. $(x,y)$
$$\chi_{A_{n_k}}(x)\chi_{B_{n_k}}(y) \to \chi_A(x)\chi_B(y).$$

Fatou's lemma implies
$$\liminf_k L(A_{n_k},B_{n_k}) \ge L(A, B).$$

\qed

\

\begin{thm}{\bf Existence of minimizers}

 Let $\Omega$ be a bounded Lipschitz domain and $E_0 \subset \mathcal{C} \Omega$ be a given
 set. There exists a set $E$, with $E \cap \mathcal{C} \Omega=E_0$ such
 that

 $$\inf_{F \cap \mathcal {C}
 \Omega=E_0}\mathcal{J}_\Omega(F)=\mathcal{J}_\Omega(E).$$
\end{thm}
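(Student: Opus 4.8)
The plan is to use the direct method of the calculus of variations. First I would fix a minimizing sequence $F_k$ with $F_k \cap \mathcal{C}\Omega = E_0$ and $\mathcal{J}_\Omega(F_k) \to \inf_{F \cap \mathcal{C}\Omega = E_0} \mathcal{J}_\Omega(F)$; since the candidate $E_0 \cup \Omega$ (or rather $E_0$ extended arbitrarily inside $\Omega$) has finite energy for a bounded Lipschitz $\Omega$, as noted in the remarks, this infimum is finite, so the $\mathcal{J}_\Omega(F_k)$ are uniformly bounded.

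Next I would extract a convergent subsequence. The natural compactness comes from the fact that the characteristic functions $\chi_{F_k}$ are all equal to $\chi_{E_0}$ outside $\Omega$, which is bounded, and inside they are bounded in $L^1(\Omega)$ and, because of the energy bound, lie in a bounded set of the fractional Sobolev space $H^{s/2}_{loc}$; the compact embedding $H^{s/2}(\Omega') \hookrightarrow L^1(\Omega')$ on bounded domains then yields a subsequence with $\chi_{F_k} \to \chi_E$ in $L^1_{loc}(\mathbb{R}^n)$ for some measurable set $E$. One should check that the limit function is genuinely a characteristic function (its values are $0$ or $1$ a.e., which passes to the $L^1_{loc}$ limit along a further subsequence converging a.e.) and that $\chi_E = \chi_{E_0}$ outside $\Omega$, which is immediate since $\chi_{F_k}$ is constant there.

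Finally I would invoke the lower semicontinuity proposition just proved: since $\chi_{F_k} \to \chi_E$ in $L^1_{loc}$, we have $\liminf_k \mathcal{J}_\Omega(F_k) \ge \mathcal{J}_\Omega(E)$. Combining with the fact that $E$ is admissible (it agrees with $E_0$ on $\mathcal{C}\Omega$) and that $\mathcal{J}_\Omega(F_k)$ converges to the infimum, we conclude $\mathcal{J}_\Omega(E) \le \inf_{F \cap \mathcal{C}\Omega = E_0} \mathcal{J}_\Omega(F) \le \mathcal{J}_\Omega(E)$, so equality holds and $E$ is the desired minimizer.

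The main obstacle I anticipate is the compactness step: one must be careful that the energy $\mathcal{J}_\Omega$ controls enough of a fractional seminorm to give $L^1_{loc}$ precompactness. The subtlety is that $\mathcal{J}_\Omega$ only contains the ``$\Omega$-contribution'' — the terms $L(E \cap \Omega, \mathcal{C}E)$ and $L(E \setminus \Omega, \mathcal{C}E \cap \Omega)$ — rather than the full $H^{s/2}$ seminorm, and the interactions between $\Omega$ and far-away regions are included but interactions entirely outside $\Omega$ are not. However, since the exterior data $E_0$ is fixed, the far exterior part of the seminorm is a fixed finite quantity (for bounded Lipschitz $\Omega$), so a uniform bound on $\mathcal{J}_\Omega(F_k)$ does translate into a uniform bound on $\|\chi_{F_k}\|_{H^{s/2}(\Omega')}$ for any bounded $\Omega' \supset \supset \Omega$; this is what makes the compact embedding applicable. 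One should state this reduction carefully.
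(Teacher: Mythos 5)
Your proposal is correct and follows essentially the same route as the paper: direct method, with the uniform bound on $\mathcal{J}_\Omega(F_k)$ giving an $H^{s/2}$ bound on $\chi_{F_k\cap\Omega}$, hence $L^1$ compactness, and then the lower semicontinuity proposition. The extra care you take in checking that the local energy controls a genuine fractional seminorm (by adding the fixed finite term $L(\Omega,\mathcal{C}\Omega)$ for a bounded Lipschitz domain) is exactly the detail the paper leaves implicit.
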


\

{\it Proof:} The infimum is bounded since $\mathcal{J}_\Omega(E_0)
< \infty$. Let $F_n$ be a sequence of sets so that
$\mathcal{J}_\Omega(F_n)$ converges to the infimum. The $H^{s/2}$
norms of the characteristic functions of $F_n \cap \Omega$ are
bounded. Thus, by compactness, there is a subsequence that
converges in $L^1(\mathbb{R}^n)$ to a set $E \cap \Omega$. Now the
result follows from the lower semicontinuity.

\qed

Next we prove the following compactness theorem.

\begin{thm}{\label{cl_limit}} Assume $E_n$ are minimizers for $\mathcal J_{B_1}$ and

$$E_n \to E \quad \mbox{in $L^1_{loc}(\mathbb{R}^n)$}.$$
Then $E$ is a minimizer for $\mathcal J_{B_1}$ and

$$ \lim_{n \to \infty} \mathcal J_{B_1}(E_n) = \mathcal J_{B_1}(E).$$
\end{thm}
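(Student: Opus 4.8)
The statement to prove is Theorem~\ref{cl_limit}: $L^1_{loc}$-limits of minimizers are minimizers, with convergence of energies. I would attack this in two stages. First, upper semicontinuity of the energy \emph{along minimizers}, i.e. $\limsup_n \mathcal J_{B_1}(E_n)\le \mathcal J_{B_1}(E)$; combined with the already-proved lower semicontinuity this forces $\mathcal J_{B_1}(E_n)\to \mathcal J_{B_1}(E)$. Second, the minimality of $E$ itself. The natural strategy is a diagonal/competitor argument: given any competitor $F$ with $F\cap \mathcal C B_1 = E\cap \mathcal C B_1$, build competitors $F_n$ for $E_n$ (so $F_n\cap \mathcal C B_1 = E_n\cap \mathcal C B_1$) with $F_n\to F$ in $L^1_{loc}$ and $\mathcal J_{B_1}(F_n)\to \mathcal J_{B_1}(F)$, then pass to the limit in $\mathcal J_{B_1}(E_n)\le \mathcal J_{B_1}(F_n)$ using lower semicontinuity on the left and the constructed convergence on the right.

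The construction of $F_n$ is the crux. Since the competitors must agree with $E_n$ outside $B_1$ and $F$ only agrees with $E$ there, one cannot simply take $F_n=F$. The fix is to glue: fix a radius $r<1$ close to $1$, and set $F_n := (F\cap B_r)\cup(E_n\setminus B_r)$. Then $F_n$ is an admissible competitor for $E_n$, and $F_n\to (F\cap B_r)\cup(E\setminus B_r)=:F^{(r)}$ in $L^1_{loc}$ (using $E_n\to E$). One then needs: (i) $\mathcal J_{B_1}(F_n)\to \mathcal J_{B_1}(F^{(r)})$, and (ii) $\mathcal J_{B_1}(F^{(r)})\to \mathcal J_{B_1}(F)$ as $r\to 1$. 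For (i) I would split the bilinear form $L$ according to whether the arguments lie inside or outside a large ball $B_R$: the "far" contributions involve $L(A, \mathcal C B_R)$-type terms which are uniformly small (since $A\subset B_1$ forces decay like $\int_{|y|>R}|y|^{-n-s}\,dy\to 0$), while the "near" contributions converge by the $L^1_{loc}$ convergence via the dominated/Fatou argument used in the lower-semicontinuity proof — but now I get full convergence, not just a liminf, because I can bound $L$ restricted to bounded regions by a convergent dominating integral. The point is that $L(A,B)$ with both $A,B\subset B_R$ is a continuous functional of $(\chi_A,\chi_B)$ in $L^1(B_R)$ modulo the mild logarithmic/polynomial singularity of the kernel, which is integrable near the diagonal since $s<1$; this needs a short estimate, e.g. $\int_{B_R}|x-y|^{-n-s}\chi_{|\chi_A(x)-\chi_{A'}(x)|=1}\,dx \le C\|\chi_A-\chi_{A'}\|_{L^1}^{?}$ — actually one must be slightly careful, and the cleanest route is to note $\chi_{A_n}(x)\chi_{B_n}(y)\to\chi_A(x)\chi_B(y)$ a.e. along a subsequence and dominate by $\chi_{B_R}(x)\chi_{B_R}(y)|x-y|^{-n-s}$, which is \emph{not} integrable — so instead I dominate the \emph{difference} and use that symmetric differences have small measure.

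Let me restructure (i) more carefully, as this is the main obstacle: the kernel $|x-y|^{-n-s}$ is not integrable on $B_R\times B_R$ (the singularity at the diagonal is of order $n+s\ge n$), so naive dominated convergence fails. The resolution is to \emph{a priori} bound $\mathcal J_{B_1}(E_n)$ uniformly (it equals the infimum, hence is $\le \mathcal J_{B_1}(E_n\setminus B_1)$, which is uniformly bounded once we know $E_n\cap \mathcal C B_1$ is, say, controlled — in the relevant setting the boundary data is fixed or the $E_n$ come with uniform density estimates from step (a); alternatively, $\mathcal J_{B_1}(E_n)\le \mathcal J_{B_1}(F_n)$ for the specific competitor $F_n$ built from $F$, giving a uniform bound). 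With $\sup_n \mathcal J_{B_1}(E_n)<\infty$, the interaction $L(E_n\cap B_1, \mathcal C E_n)$ across the relevant regions is uniformly bounded, and one can then show the \emph{near-diagonal} part of the energy difference $|\mathcal J_{B_1}(F_n)-\mathcal J_{B_1}(F^{(r)})|$ is small by a uniform absolute-continuity argument (the energy of any minimizer-like set is uniformly absolutely continuous with respect to the measure in a neighborhood of the diagonal, because the total is bounded), while the region away from the diagonal is handled by genuine dominated convergence. This "no concentration of energy at the diagonal" is exactly what prevents energy from being lost in the limit and is the heart of why $\lim \mathcal J_{B_1}(E_n)=\mathcal J_{B_1}(E)$ holds — I expect the authors to phrase it via the splitting $\mathcal J = \mathcal J|_{\text{near}} + \mathcal J|_{\text{far}}$ and control each piece. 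Once both (i) and (ii) are in hand, the inequality $\mathcal J_{B_1}(E)\le \liminf \mathcal J_{B_1}(E_n) \le \limsup \mathcal J_{B_1}(E_n) \le \lim_n \mathcal J_{B_1}(F_n) = \mathcal J_{B_1}(F^{(r)}) \xrightarrow{r\to1} \mathcal J_{B_1}(F)$ gives minimality of $E$, and taking $F=E$ gives $\mathcal J_{B_1}(E_n)\to \mathcal J_{B_1}(E)$.
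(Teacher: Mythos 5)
Your overall architecture -- glue a competitor onto $E_n$, compare energies, let $n\to\infty$, and combine with lower semicontinuity -- is exactly the paper's. But your execution has a genuine gap, and it is one you create yourself by gluing at a radius $r<1$. The paper takes $F_n:=(F\cap B_1)\cup(E_n\setminus B_1)$, i.e. it glues exactly at $\partial B_1$. Then $F_n$ and $F$ agree \emph{inside} $B_1$ and differ only on $D_n:=(E_n\Delta E)\setminus B_1$, a set disjoint from $B_1$. Since every pair $(x,y)$ counted by $\mathcal J_{B_1}$ has at least one point in $B_1$, any pair whose status changes must have its other point in $D_n$, so
$$\left|\mathcal J_{B_1}(F)-\mathcal J_{B_1}(F_n)\right|\le L\bigl(B_1,\,(E_n\Delta E)\setminus B_1\bigr)=:b_n,$$
and the only singularity left is the codimension-one one across $\partial B_1$: writing $a_n(\rho)=\mathcal H^{n-1}((E_n\Delta E)\cap\partial B_\rho)$ one gets $b_n\le C\int_1^{r_0}a_n(\rho)(\rho-1)^{-s}d\rho+Cr_0^{-s}$, which tends to $0$ because $(\rho-1)^{-s}$ is integrable ($s<1$), $\int_1^{r_0}a_n\to0$ by the $L^1_{loc}$ convergence, and $r_0$ is arbitrary. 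No near-diagonal analysis and no intermediate set $F^{(r)}$ are needed; taking $F=E$ gives $\limsup\mathcal J_{B_1}(E_n)\le\mathcal J_{B_1}(E)$ and general $F$ gives minimality.

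By contrast, with your choice $r<1$ the difference set $(E_n\Delta E)\setminus B_r$ penetrates into $B_1$, so the energy discrepancy between $F_n$ and $F^{(r)}$ genuinely contains self-interaction terms of the form $L(A_n,\mathcal C A_n)$ with $A_n\subset B_1\setminus B_r$ of small measure -- and small measure does \emph{not} control fractional perimeter. You correctly identify this as the main obstacle, but your proposed fix does not close it: the assertion that a uniform bound on $\mathcal J_{B_1}(E_n)$ yields ``uniform absolute continuity of the energy near the diagonal'' is false as a general principle (a bounded sequence of measures need not be equi-integrable; energy can concentrate on the annulus $B_1\setminus B_r$ uniformly in $n$). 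Step (ii), $\mathcal J_{B_1}(F^{(r)})\to\mathcal J_{B_1}(F)$, has the same unaddressed difficulty. So as written the proof does not go through; the repair is simply to glue at $\partial B_1$ so that the error term is an interaction between two disjoint sets and the diagonal never enters.
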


\

{\it Proof:} Assume $F=E$ outside $B_1$. Let

$$F_n:= (F \cap B_1) \cup (E_n \setminus B_1),$$

then

$$\mathcal{J}_{B_1}(F_n) \ge \mathcal{J}_{B_1}(E_n).$$

It is easy to check that

$$|\mathcal{J}_{B_1}(F) -\mathcal{J}_{B_1}(F_n)| \le L(B_1, (E_n \Delta E) \setminus B_1).$$

We denote $$ b_n:= L(B_1, (E_n \Delta E) \setminus B_1)$$ and
obtain

$$\mathcal{J}_{B_1}(F) + b_n \ge
\mathcal{J}_{B_1}(E_n) .$$

It suffices to prove that $b_n \to 0$. Then we will get

$$\mathcal{J}_{B_1}(F) \ge \limsup \mathcal J_{B_1}(E_n)$$

\noindent
and the Theorem follows from the lower semicontinuity of
$\mathcal{J}$:

$$\liminf \mathcal J_{B_1}(E_n) \ge \mathcal J_{B_1}(E).$$

Define now
$$a_n(r):= \mathcal {H} ^{n-1} ((E_n \Delta E) \cap \p B_r),$$
we then obtain that for any $r_0>1$

$$b_n \le C \int_1^{r_0} a_n(r)(r-1)^{-s}dr+ C
r_0^{-s}$$ where $C$ is a universal constant. Since

$$\int_1^{r_0}a_n(r)dr \to 0, \quad \quad \mbox{$a_n(r) \le
Cr_0^{n-1}$ for $r \le r_0$},$$ we find
$$\limsup b_n \le Cr_0^{-s},$$
which proves the theorem because $r_0$ is arbitrary.

\qed

\

\section{\bf{Uniform density estimates}}

\

Let $E$ be a measurable set. We say that $x$ belongs to the
interior of $E$, (in the measure sense) if there exists $r>0$ such
that $|B_r(x)\setminus E|=0$. We will always assume that the sets
we consider, by possibly modifying them on a set of measure $0$,
contain their interior and do not intersect the interior of their
complement.

In this case we see that $x \in \p E$ if and only if for any
$r>0$, $|B_r(x) \cap E|>0$ and $|B_r(x) \cap \mathcal {C}E|>0$.
Notice that $\p E$ is a closed set and the interior is an open
set.

\begin{thm}{\label{un_de}}{\bf Uniform density estimate}

Assume  $E$ is a variational subsolution in $\Omega$. There exists
$c>0$ universal (depending on $n$, $s$) such that if $x \in \p E$
and $B_r(x) \cap E \subset \Omega$

$$| E \cap B_r(x)| \ge cr^n.$$
\end{thm}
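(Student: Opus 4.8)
The plan is to argue by a De Giorgi--type iteration on dyadic balls, exploiting the subsolution inequality \eqref{sub} together with the uniform lower bound for $\mathcal{J}$ to feed the density back into itself. Write $B_\rho = B_\rho(x)$ and set $V(\rho) := |E\cap B_\rho|$, which by the definition of $\partial E$ (in the measure sense) satisfies $V(\rho)>0$ for all $\rho\le r$. First I would apply \eqref{sub} with the competitor obtained by deleting $A := E\cap B_\rho$ from $E$; this is admissible because $B_\rho\subset\Omega$. The inequality gives
$$L(E\cap B_\rho,\ E\setminus B_\rho)\ \ge\ L(E\cap B_\rho,\ \mathcal{C}E).$$
The right-hand side controls the ``interaction at infinity'' of the ball with the complement, while the left-hand side is bounded above by the interaction of $B_\rho$ with all of $\mathbb{R}^n\setminus B_\rho$, which is a purely geometric quantity. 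This is the basic perimeter-type inequality replacing $\mathrm{Per}(E,B_\rho)\le \mathrm{Per}(B_\rho)$ in the classical theory.

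Next I would combine this with a fractional isoperimetric/Sobolev inequality: $L(E\cap B_\rho,\mathcal{C}(E\cap B_\rho))\ge c\,V(\rho)^{(n-s)/n}$, valid for sets of finite $H^{s/2}$ seminorm. Splitting $\mathcal{C}(E\cap B_\rho) = (\mathcal{C}E)\cup (B_\rho\setminus E)$ and using the subsolution estimate to dominate the first piece, one is left controlling $L(E\cap B_\rho,\ B_\rho\setminus E)$ from above by something like $\rho^{-s}\big(V(\rho')-V(\rho)\big)$ after integrating in the radial variable over $\rho\in(\rho_0,\rho_1)$, using that $L(E\cap B_{\rho_0},\ B_{\rho_1}\setminus B_{\rho_0})\le C(\rho_1-\rho_0)^{-s}\big(V(\rho_1)-V(\rho_0)\big)$ — the same kind of estimate already used in the proof of Theorem~\ref{cl_limit} for the quantity $b_n$. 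Putting the pieces together yields a differential inequality of the form
$$V(\rho)^{(n-s)/n}\ \le\ C\,\rho^{-s}\,\big(V(2\rho)-V(\rho)\big)$$
(up to adjusting constants and radii), i.e. a bound of $V(\rho)$ by a controlled increment of $V$ at a larger scale.

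Then I would iterate this dyadically. Set $\rho_k := r\,2^{-k}$ and $V_k := V(\rho_k)$. The differential inequality becomes a recursion $V_k^{(n-s)/n}\le C\,2^{ks}\,r^{-s}(V_{k-1}-V_k)$, and a standard fast-geometric-decay lemma (à la De Giorgi) shows that if $V_k$ were to decay faster than any fixed power $\rho_k^n$ — equivalently if $\limsup_{k}\rho_k^{-n}V_k = 0$, which is exactly the statement that the density fails at $x$ — then the recursion forces $V_k = 0$ for large $k$, contradicting $x\in\partial E$. Hence $\liminf_{\rho\to 0}\rho^{-n}V(\rho) \ge c_0 > 0$ for a universal $c_0$; a further routine comparison between scales upgrades this to the clean statement $|E\cap B_r(x)|\ge c r^n$ at every scale $r$ with $B_r(x)\subset\Omega$.

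The main obstacle I anticipate is making the radial slicing estimate of $L(E\cap B_\rho,\ B_{\rho'}\setminus B_\rho)$ precise with the correct power of $(\rho'-\rho)$ and a clean dependence on $\rho$, since the kernel $|x-y|^{-n-s}$ is only borderline integrable across the sphere $\partial B_\rho$; one must track carefully which part of the interaction is ``short range'' (contributing the $(\rho'-\rho)^{-s}$ singular weight, integrable in $\rho$ precisely because $s<1$) and which is ``long range'' and harmless. The fractional isoperimetric inequality itself I would take as known, but its interplay with the localization to $B_\rho$ — i.e. converting $L(E\cap B_\rho, B_\rho\setminus E)$ plus the subsolution term into the full isoperimetric deficit — is the step that needs care to keep all constants universal.
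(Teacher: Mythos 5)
Your ingredients are the right ones and match the paper's: the subsolution inequality with $A=E\cap B_\rho$ gives $L(A,\mathcal{C}A)=L(A,\mathcal{C}E)+L(A,E\setminus A)\le 2L(A,E\setminus A)\le 2L(A,\mathcal{C}B_\rho)$; the fractional Sobolev inequality gives $V(\rho)^{(n-s)/n}\le C\,L(A,\mathcal{C}A)$; and the cross-sphere interaction is controlled by the radial slicing $L(A,\mathcal{C}B_\rho)\le C\int_0^\rho a(\tau)(\rho-\tau)^{-s}\,d\tau$, whose singular weight is tamed by integrating the whole inequality in the radius (using $s<1$), exactly the point you flag as delicate. (Minor slip: $\mathcal{C}(E\cap B_\rho)=\mathcal{C}E\cup(E\setminus B_\rho)$, not $\mathcal{C}E\cup(B_\rho\setminus E)$; the leftover term is $L(E\cap B_\rho,\,E\setminus B_\rho)$, and the paper slices in the $x$-variable of $A$ rather than in the $y$-variable of the annulus, which avoids your increment $V(2\rho)-V(\rho)$ altogether.)

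The genuine gap is in the iteration. You iterate on dyadic radii $\rho_k=r2^{-k}\to 0$ and claim the recursion $V_k^{(n-s)/n}\le C2^{ks}r^{-s}(V_{k-1}-V_k)$ forces $V_k=0$ for large $k$. It does not: writing $w_k=V_k/\rho_k^n$ the recursion yields $w_k\le C'w_{k-1}^{n/(n-s)}$, which under a smallness assumption gives only doubly exponential decay of $w_k$ (hence of $V_k/\rho_k^n$) to $0$, never exact vanishing at a finite stage; and $V(\rho)>0$ for all $\rho$ with $V(\rho)/\rho^n\to0$ very fast is perfectly consistent with $x\in\partial E$, so no contradiction is reached. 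Moreover, proving $\liminf_{\rho\to0}\rho^{-n}V(\rho)\ge c_0$ would not by itself give the bound at the fixed scale $r$ of the statement. The De Giorgi iteration must be run on radii accumulating at a \emph{positive} radius: the paper takes $t_k=\frac12+2^{-k}$ (after normalizing $r=1$), so that $v_k=V_{t_k}$ decreases to the limit $V_{1/2}$, and the superlinear recursion $v_{k+1}^{(n-s)/n}\le C2^{k+1}v_k$ forces $v_k\to0$, i.e.\ $|E\cap B_{1/2}|=0$, which does contradict $x\in\partial E$ via the measure-theoretic definition of the boundary. The contrapositive (``$|E\cap B_{1/2}(x)|>0\Rightarrow|E\cap B_1(x)|\ge c$'') is then exactly the density estimate at every admissible scale. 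With that change of target radii your argument closes; as written, the final contradiction is not there.
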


If $E$ is a minimizer for $\mathcal {J}_\Omega$ then both $E$ and
$\mathcal{C}E$ satisfy the uniform density estimate. Theorem
\ref{un_de} is a consequence of the following lemma.

\begin{lem} \label{l2.1} Assume $E$ is a subsolution in $B_1$. There exists $c>0$ universal such that, if
$|E \cap B_1|\leq c$ then $|E\cap B_{1/2}|=0$.
\end{lem}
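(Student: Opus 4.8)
The plan is to exploit the subsolution inequality (\ref{sub}) with the clever choice $A = E \cap B_r$ for a suitable radius $r \in (1/2, 1)$, and to set up a differential inequality for the decreasing quantity $u(r) := |E \cap B_r|$. The starting point is that for $A = E \cap B_r \subset E \cap B_1$ the inequality $L(A, E \setminus A) \ge L(A, \mathcal{C}E)$ holds. On the right-hand side we simply throw away all of $\mathcal{C}E$ except the part living just outside $B_r$ but inside $B_1$: this will be bounded below by a constant (coming from the kernel $|x-y|^{-n-s}$ being comparable to $1$ on bounded sets) times $|A|$ times the measure of $(\mathcal{C}E \cap B_1) \setminus B_r$, which is $|B_1| - |E\cap B_1| - (|B_r| - u(r)) \ge c_0$ once $|E\cap B_1| \le c$ is small and $r$ is not too close to $1$. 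So the right side is bounded below by $c_1 u(r)$.

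For the left-hand side $L(E\cap B_r, E\setminus (E\cap B_r))$, the point is that $E \setminus (E \cap B_r) \subset \mathcal{C}B_r$, so this is a double integral over $E\cap B_r$ against a set outside $B_r$. This is exactly the kind of quantity controlled by the fractional perimeter / isoperimetric-type reasoning, and the standard trick is to dominate it by the full contribution $L(E\cap B_r, \mathcal{C}B_r) = \int_{E\cap B_r}\left(\int_{\mathcal{C}B_r}|x-y|^{-n-s}dy\right)dx$. Using that the inner integral is comparable to $\mathrm{dist}(x,\p B_r)^{-s}$, one gets an estimate of the form $L(E\cap B_r, \mathcal{C}B_r) \le C\left(u(r)^{1-s/n} u'(r)^{\,?} + \dots\right)$ — more precisely, after a coarea/layer-cake argument slicing $E\cap B_r$ by distance to $\p B_r$, one arrives at something like $L(E\cap B_r, \mathcal{C}B_r) \le C\, u'(r)^{s}\, u(r)^{1-s}$ or, more robustly, a bound in terms of $u(r)$ and the relative isoperimetric deficit. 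Combining the two sides yields a differential inequality $c_1 u(r) \le C u'(r)^{s} u(r)^{1-s}$, i.e. $u(r) \le C' u'(r)$, valid for a.e. $r$ in an interval like $(1/2, 3/4)$.

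Integrating this ODE inequality over $r \in (1/2, 3/4)$ would give exponential decay, but since we want to conclude $u(1/2) = 0$ outright from the smallness $u(1) \le c$, the correct mechanism is a dyadic iteration: if $u$ is small on $B_1$ then the inequality forces $u$ to be proportionally smaller on a slightly smaller ball, and iterating on radii $r_k = 1/2 + 2^{-k-1}$ drives the measure to $0$ on $B_{1/2}$ (this is the classical De Giorgi-type clean-up, adapted to the fractional setting). Concretely, one shows $u(r_{k+1}) \le C 2^{k s} \cdot (\text{something})$ with a fast-enough gain that the product telescopes to zero provided the initial $c$ is chosen small enough depending only on $n, s$.

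The main obstacle I expect is making the left-hand-side estimate $L(E\cap B_r, \mathcal{C}B_r) \gtrsim$ (a power of $u(r)$ times a derivative term) genuinely rigorous: this is the fractional analogue of the perimeter lower bound and requires care because $E\cap B_r$ can be spread thinly against $\p B_r$. The honest way is to use the fractional relative isoperimetric inequality — $L(F\cap B_r, \mathcal{C}(F\cap B_r)) \ge c\, |F\cap B_r|^{(n-s)/n}$ for sets of measure $\le |B_r|/2$ — which directly gives $u(r)^{(n-s)/n} \le C L(E\cap B_r, E\setminus(E\cap B_r)) + C u'(r)$ (the $u'$ term absorbing the interaction of $E\cap B_r$ with $E\setminus B_r$ across $\p B_r$), and then feeding in the subsolution inequality turns this into $u(r)^{(n-s)/n} \le C u(r) + C u'(r)$, whence for $u(r)$ small the $u(r)$ term is absorbed and $u(r)^{(n-s)/n} \le C u'(r)$; this superlinear-type inequality, once $u$ is small, propagates smallness inward and kills $u$ on $B_{1/2}$ by the iteration above. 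Getting the geometry of the slicing-in-$r$ argument and the bookkeeping of the two radii (the test set radius versus the radius where we read off the measure) exactly right is the delicate part; everything else is the kernel being bounded above and below on compact sets.
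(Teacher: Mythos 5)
Your final route is in spirit the paper's: the fractional Sobolev/isoperimetric lower bound $|E\cap B_r|^{(n-s)/n}\le C\,L(E\cap B_r,\mathcal C(E\cap B_r))$, the subsolution inequality to reduce everything to $L(E\cap B_r,E\setminus B_r)\le L(E\cap B_r,\mathcal C B_r)$, and a De Giorgi iteration on dyadic radii. (Your first attempt, lower-bounding $L(A,\mathcal C E)$ by $c_1u(r)$, only yields $u\le Cu'$ and exponential decay, as you note; it should simply be discarded.) But there is a genuine gap at exactly the step you flag as delicate: the bound $L(E\cap B_r,\mathcal C B_r)\le Cu'(r)$, or $Cu'(r)^s u(r)^{1-s}$, is neither proved nor true pointwise in $r$. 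Writing $a(\rho)=\mathcal H^{n-1}(E\cap\partial B_\rho)$, so that $u'(r)=a(r)$ a.e., the correct estimate (from $\int_{\mathcal C B_r}|x-y|^{-n-s}\,dy\le C(r-|x|)^{-s}$ and the coarea formula) is $L(E\cap B_r,\mathcal C B_r)\le C\int_0^r a(\rho)(r-\rho)^{-s}\,d\rho$: it involves the whole profile $\{a(\rho)\}_{\rho<r}$ against a singular kernel and cannot be controlled by $a(r)$ alone (mass of $E$ concentrated just inside $\partial B_r$ makes the integral large while $a(r)$ stays small). So the pointwise differential inequality $u(r)^{(n-s)/n}\le Cu'(r)$, on which your whole clean-up rests, is not available.

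The missing idea is to integrate rather than differentiate: integrating $V_r^{(n-s)/n}\le C\int_0^r a(\rho)(r-\rho)^{-s}\,d\rho$ in $r$ over $[0,t]$ and using Fubini together with $s<1$ gives $\int_0^t V_r^{(n-s)/n}\,dr\le Ct^{1-s}\int_0^t a(\rho)\,d\rho=Ct^{1-s}V_t$, and it is this integrated inequality that feeds the iteration: with $t_k=\frac12+2^{-k}$ and $v_k=V_{t_k}$ one gets $2^{-(k+1)}v_{k+1}^{(n-s)/n}\le C_0v_k$, whose superlinear character drives $v_k\to0$ once $v_0\le c$. Separately, your intermediate claim that feeding in the subsolution inequality yields $u(r)^{(n-s)/n}\le Cu(r)+Cu'(r)$, with the $Cu(r)$ term later ``absorbed,'' is muddled: the subsolution property is used precisely to write $L(A,\mathcal C A)=L(A,\mathcal C E)+L(A,E\setminus A)\le 2L(A,E\setminus A)\le 2L(A,\mathcal C B_r)$, and no $Cu(r)$ term arises; an upper bound of the form $L(A,\mathcal C E)\le Cu(r)$ is false in general, since $\mathcal C E$ meets $A$ along $\partial E$ at distance zero.
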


\begin{proof} For $r\in(0,1]$, set $$V_r=|E\cap B_r|, \quad a(r)=\mathcal H ^{n-1}(E \cap \p
B_r).$$

We apply the Sobolev inequality

$$\|u\|_{L^p} \le C \|u\|_{H^{s/2}},
\quad \quad \quad \mbox{$p=\frac{2n}{n-s}$}$$ for $u=\chi_{E\cap
B_r}$ and obtain

 $$V_r^{\frac{n-s}{n}}\leq C L(A, \mathcal C A) \quad \quad \mbox{ with $A:=E \cap B_r$}.$$

From (\ref{sub}) we find
$$L(A, \mathcal {C} A) = L(A, \mathcal{C} E)
+L(A, E \setminus A)$$

$$\le 2L(A, E \setminus A) \le 2 L(A, \mathcal C B_r). $$

If $x \in A$ then

$$\int_{\mathcal C B_r} \frac{1}{|x-y|^{n+s}}dy \le C
\int_{r-|x|}^\infty\frac{1}{\rho^{n+s}}\rho^{n-1}d \rho \le
C(r-|x|)^{-s},$$

hence

$$L(A,\mathcal C B_r)=\int\int \frac{\chi_A(x)\chi_{\mathcal C
B_r}(y)}{|x-y|^{n+s}}dxdy\le C \int_0^ra(\rho)(r-\rho)^{-s}.$$

We conclude that

$$V_r^{\frac{n-s}{n}} \le C \int_0^ra(\rho)(r-\rho)^{-s}.$$

Integrating the inequality above between $0$ and $t$ we find

\begin{equation}{\label{e2.6}}
\int_0^t V_r^{\frac{n-s}{n}}dr \le C t^{1-s} \int_0^t a(\rho)d
\rho=Ct^{1-s}V_t.
\end{equation}

The proof is now of the standard De Giorgi iteration: set
$$t_k=\frac{1}{2}+\frac{1}{2^k},\ \ \ \ v_k=V_{t_k};$$ notice that
$t_0=1$ and $t_\infty=\frac{1}{2}$. Equation (\ref{e2.6}) yields
$$2^{-(k+1)}v_{k+1}^\frac{n-s}{n} \le C_0 v_k$$
with $C_0$ universal constant. This implies $v_k \to 0$ as $k \to
\infty$ if $v_0 \le c$ with $c$ universal, small enough.

\end{proof}

\

\begin{cor}{\label{cl_ba}}{\bf Clean ball condition}

Assume  $E$ is a minimizer for $\mathcal{J}_\Omega$, $x \in \p E$
and $B_r(x) \subset \Omega$. There exist balls

$$B_{cr}(y_1) \subset E \cap B_r(x) ,\quad \quad B_{cr}(y_2) \subset \mathcal{C}E \cap B_r(x)$$
for some small $c>0$ universal.

\end{cor}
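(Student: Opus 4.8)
The plan is to derive the clean ball condition directly from Theorem~\ref{un_de} and Lemma~\ref{l2.1}, applied to both $E$ and $\mathcal{C}E$. Since $E$ is a minimizer of $\mathcal{J}_\Omega$, both $E$ and $\mathcal{C}E$ are variational subsolutions, so the uniform density estimate holds for each of them. First I would fix $x \in \partial E$ with $B_r(x) \subset \Omega$, and look at the concentric ball $B_{r/2}(x)$. By the density estimate applied to $E$ we get $|E \cap B_{r/2}(x)| \ge c (r/2)^n$, and likewise $|\mathcal{C}E \cap B_{r/2}(x)| \ge c(r/2)^n$; in particular both $E$ and $\mathcal{C}E$ occupy a definite fraction of $B_{r/2}(x)$.

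Next I would run a covering/pigeonhole argument: cover $B_{r/2}(x)$ by a controlled number $N(n)$ of balls of radius $\rho r$ for a small dimensional constant $\rho$ to be chosen. If \emph{every} such small ball intersected $\partial E$, then near each of their centers both $E$ and $\mathcal{C}E$ would have positive density on a scale comparable to $\rho r$, which is fine — but the point is the opposite: I want to find one small ball contained entirely in $E$ and another entirely in $\mathcal{C}E$. The mechanism is Lemma~\ref{l2.1} (rescaled): if a ball $B_{\rho r}(y)$ satisfies $|E \cap B_{\rho r}(y)| \le c (\rho r)^n$ with $c$ the universal constant of the lemma, then $|E \cap B_{\rho r/2}(y)| = 0$, i.e. $B_{\rho r / 2}(y) \subset \mathcal{C}E$ (using that $\mathcal{C}E$ contains its interior). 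So it suffices to locate a small ball in which $E$ has small relative measure, and symmetrically one in which $\mathcal{C}E$ has small relative measure.

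To locate such balls, suppose for contradiction that every ball $B_{\rho r}(y) \subset B_{r/2}(x)$ has $|E \cap B_{\rho r}(y)| > c(\rho r)^n$. Pick a maximal $\rho r$-separated set of points $y_i$ in, say, $B_{r/4}(x)$; there are at least $c' \rho^{-n}$ of them and the balls $B_{\rho r/2}(y_i)$ are disjoint and contained in $B_{r/2}(x)$. But this forces $|\mathcal{C}E \cap B_{r/2}(x)|$ to be small — more precisely, I would instead argue with $\mathcal{C}E$: if $\mathcal{C}E$ had measure at least $c(\rho r)^n$ in every such small ball, summing over the disjoint family gives $|\mathcal{C}E \cap B_{r/2}(x)| \ge c' \rho^{-n} \cdot c (\rho r /2)^n = c'' r^n$, which is consistent, so that naive count is not contradictory. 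The correct argument is: among the disjoint balls $B_{\rho r/2}(y_i)$, since $|E \cap B_{r/2}(x)| \le |B_{r/2}(x)| = \omega_n (r/2)^n$, only a bounded-by-$(\text{fixed fraction})$ number of them can have $|E \cap B_{\rho r /2}(y_i)|$ comparable to their full volume; choosing $\rho$ small and the number of balls large enough (still dimensional), at least one of them has $|E \cap B_{\rho r/2}(y_i)|$ below the Lemma~\ref{l2.1} threshold, hence contains a sub-ball lying in $\mathcal{C}E$. The symmetric statement with $E,\mathcal{C}E$ interchanged, using $|\mathcal{C}E\cap B_{r/2}(x)|$ does not exceed the volume of the ball, produces the ball inside $E$. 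Translating everything back to $B_r(x)$ and absorbing all dimensional and $s$-dependent constants into a single $c>0$ finishes the proof.

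The main obstacle is the bookkeeping in the pigeonhole step: one must choose $\rho$ small enough that Lemma~\ref{l2.1}'s smallness hypothesis $|E\cap B_{\rho r}(y)| \le c_0 (\rho r)^n$ can be guaranteed for at least one ball of a disjoint family whose total count is forced to be large, while simultaneously keeping the family inside $B_{r/2}(x)$ and making sure the conclusion balls $B_{\rho r/2}(y_i)$ still sit inside $B_r(x)$. This is where one has to be careful that the universal constant from the density estimate and the universal constant from the De~Giorgi lemma interact correctly; none of it is deep, but the quantifiers must be ordered properly: first fix $\rho = \rho(n,s)$, then the final $c = c(n,s)$.
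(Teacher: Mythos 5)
The reduction to ``find a ball of radius $\rho r$ in which $E$ has measure below the threshold of Lemma \ref{l2.1}'' is fine, and so is your rescaled use of that lemma once such a ball is found. The gap is in the step that is supposed to locate the ball. Your pigeonhole claim --- that because $|E\cap B_{r/2}(x)|\le |B_{r/2}(x)|$ only a bounded fraction of the disjoint balls $B_{\rho r/2}(y_i)$ can have $E$-measure comparable to their full volume --- is false: that volume constraint is vacuous, and nothing you invoke rules out the configuration in which $E$ occupies, say, $99\%$ of every single small ball. The two-sided density estimates only tell you that $E$ and $\mathcal{C}E$ each fill a definite fraction of $B_{r/2}(x)$; a fine lamination or checkerboard pattern at a scale much smaller than $\rho r$ satisfies this and defeats the pigeonhole at every fixed $\rho$. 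Even the refined count using $|\mathcal{C}E\cap B_{r/2}(x)|\ge cr^n$ only yields a ball in which the $E$-fraction is not close to $1$, which is far from the smallness $|E\cap B_{\rho r}(y)|\le c_0(\rho r)^n$ that Lemma \ref{l2.1} requires. A qualitative escape via Lebesgue density points of $\mathcal{C}E$ does produce a clean ball of \emph{some} radius, but not of radius $cr$ with $c$ universal, which is the whole content of the corollary.

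The missing ingredient is the minimality of $E$, which you never use beyond the density estimates. The paper's proof covers $B_r(x)$ by cubes of size $\delta r$ and observes that each cube $Q$ meeting $\partial E$ contributes at least $c(\delta r)^{n-s}$ to $L(E\cap B_r,\mathcal{C}E\cap B_r)$ (by the two-sided density estimates applied in the tripled cube), while minimality bounds this quantity from above by $C_0 r^{n-s}$; hence at most $C\delta^{s-n}$ cubes meet $\partial E$, which is $o(\delta^{-n})$. Since $|E\cap B_r(x)|\ge cr^n$ forces $E$ to meet at least $c\delta^{-n}$ of the cubes, for $\delta$ small universal some cube meets $E$ but not $\partial E$ and therefore lies entirely in $E$; symmetrically for $\mathcal{C}E$. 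It is this energy upper bound coming from minimality (essentially the $\mathcal{H}^{n-s}$ Minkowski-content bound on $\partial E$ recorded in Corollary \ref{c4.1}) that excludes the laminated configurations and makes the counting work; your argument needs this input to close.
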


\begin{proof}
Assume $x=0$ and $r=1$. We decompose the space into cubes of size
$\delta$. We show that $N_\delta$, the number of cubes that
intersect $\p E \cap B_1$, satisfies $$N_\delta \le C
\delta^{s-n}.$$

Let $Q_\delta \subset B_1$ be a cube such that $\p E \cap Q_\delta
\ne \emptyset$. From the density estimate,

$$|E \cap Q_{3 \delta}|, |\mathcal C E \cap Q_{3 \delta}| \ge c
\delta^n$$ which implies

$$ L(E \cap Q_{3 \delta},\mathcal C E \cap Q_{3 \delta}) \ge
c \delta^{n-s}.$$ Adding all these inequalities we obtain

$$L(E \cap B_1, \mathcal C E \cap B_1) \ge c_0 N_\delta \delta^{n-s}.$$

On the other hand, from minimality

$$L(E \cap B_1, \mathcal C E \cap B_1) \le L(E\cap B_1, \mathcal C E) \le L(E \cap B_1, \mathcal C B_1) \le C_0,$$
which proves the bound on $N_\delta$.

Since $0 \in \p E$, the density estimate implies that at least $c
\delta^{-n}$ of the cubes from $B_1$ intersect $E \cap B_1$. Thus,
if $\delta$ is chosen small universal, there exists a cube of size
$\delta$ which is completely included in $E \cap B_1$.

\end{proof}

Theorem \ref{un_de} has the following (classical) corollary, useful in
several places of the sequel.
\begin{cor}
\label{c4.1} (i) If $E$ minimizes $\mathcal{J}_\Omega$ then

$$\mathcal{H}^{n-s}(\p E  \cap \Omega) < \infty.$$

(ii) (Improvement of Theorem \ref{cl_limit}) Assume $E_k$ are minimizers for $\mathcal J_{B_1}$ and

$$E_k \to E \quad \mbox{in $L^1_{loc}(\mathbb{R}^n)$}.$$
For every $\varepsilon>0$, $\partial E_k$ is in an $\varepsilon$-neighborhood
of $\partial E$ as soon as $n$ is large enough.

\end{cor}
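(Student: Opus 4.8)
The plan is to prove the two parts separately, both using the uniform density estimate (Theorem \ref{un_de}) as the essential input. For part (i), I would estimate the size of $\partial E \cap \Omega'$ for a slightly smaller domain $\Omega' \Subset \Omega$ by a covering argument of the type already used in Corollary \ref{cl_ba}. Fix a small $\delta>0$ and decompose $\mathbb{R}^n$ into a grid of cubes of side $\delta$. For each cube $Q_\delta$ meeting $\partial E$ with $Q_{3\delta} \subset \Omega$, the density estimate applied at a point of $\partial E \cap Q_\delta$ gives $|E \cap Q_{3\delta}| \ge c\delta^n$ and $|\mathcal{C}E \cap Q_{3\delta}| \ge c\delta^n$, hence $L(E \cap Q_{3\delta}, \mathcal{C}E \cap Q_{3\delta}) \ge c\delta^{n-s}$. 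Summing over a bounded-overlap subfamily and comparing with $L(E \cap \Omega', \mathcal{C}E) \le \mathcal{J}_\Omega(E) < \infty$ (finite by minimality, since it is bounded by $\mathcal{J}_\Omega(E\setminus\Omega)$), one gets $N_\delta \le C\delta^{s-n}$ for the number of such cubes, with $C$ independent of $\delta$. Each such cube has diameter $\sqrt{n}\,\delta$, so $\sum (\operatorname{diam} Q_\delta)^{n-s} \le C\,n^{(n-s)/2}$ uniformly in $\delta$, which by definition of Hausdorff measure (and a covering of $\partial E \cap \Omega'$ by these cubes) bounds $\mathcal{H}^{n-s}(\partial E \cap \Omega') $. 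Exhausting $\Omega$ by such $\Omega'$ gives the claim. The only mild subtlety is the bounded-overlap bookkeeping when passing from the full grid to a disjointified subfamily, but that is a standard combinatorial fact about the $3\delta$-dilations of a $\delta$-grid.

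For part (ii), the strategy is a contradiction argument combining the $L^1_{loc}$ convergence with the density estimate, which is uniform (the constant $c$ depends only on $n,s$). Suppose for some $\varepsilon>0$ there are infinitely many $k$ and points $x_k \in \partial E_k \cap \Omega'$ with $B_\varepsilon(x_k) \cap \partial E = \emptyset$; passing to a subsequence, $x_k \to x_\infty$. Then $B_{\varepsilon/2}(x_\infty)$ is disjoint from $\partial E$ for $k$ large, so (using that $E$ contains its interior and misses the interior of its complement) either $B_{\varepsilon/2}(x_\infty) \subset E$ up to a null set or $B_{\varepsilon/2}(x_\infty) \subset \mathcal{C}E$ up to a null set. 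In the first case, the density estimate applied to $\mathcal{C}E_k$ at $x_k$ gives $|\mathcal{C}E_k \cap B_{\varepsilon/4}(x_\infty)| \ge c(\varepsilon/4)^n$ for all large $k$, contradicting $|E_k \triangle E| \to 0$ in $L^1(B_{\varepsilon/2}(x_\infty))$ since $|\mathcal{C}E \cap B_{\varepsilon/4}(x_\infty)| = 0$. The second case is symmetric, using the density estimate for $E_k$. This establishes that $\partial E_k$ lies in an $\varepsilon$-neighborhood of $\partial E$ for $k$ large; a matching argument (which also needs that $\partial E$ is nonempty near each of its points, immediate from the definition of boundary) shows conversely that $\partial E$ lies in an $\varepsilon$-neighborhood of $\partial E_k$, so that the boundaries converge in Hausdorff distance locally.

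The main obstacle, such as it is, lies in part (ii): one must be careful that the density estimate of Theorem \ref{un_de} requires $B_r(x) \cap E \subset \Omega$, so the argument as stated is cleanest on compactly contained subdomains, and the statement "$\partial E_k$ is in an $\varepsilon$-neighborhood of $\partial E$" should be read locally (the excerpt's phrasing with $\Omega=B_1$ and the minimizers defined there makes this automatic on $B_{1/2}$, say). The density estimate itself does the real work; everything else is routine measure theory and covering.
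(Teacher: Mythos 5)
Your proposal is correct and follows essentially the same route as the paper: part (i) via the cube-counting bound $N_\delta \le C\delta^{s-n}$ from the clean ball argument, and part (ii) via the uniform density estimate contradicting $L^1_{loc}$ convergence. If anything you are slightly more complete than the paper, which writes out only the case where $x_k$ is far from $E$ itself and leaves the symmetric case (far from $\mathcal{C}E$, handled by the density estimate for $\mathcal{C}E_k$) implicit.
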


\begin{proof} Fact (i) is straightforward from the proof of Corollary \ref{cl_ba}. Let us prove (ii): 
for this, assume the existence of a (possibly relabeled) sequence $(x_k)$ and $\varepsilon_0>0$
such that
$$ x_k\in\partial E_k\ \ \ \hbox{and}\ \ \ d(x_k,E)\geq\varepsilon_0.$$
By Theorem \ref{un_de} we have
$$\vert E_k\backslash E\vert\geq\vert E_k\cap B_{\varepsilon_0/2}(x_k)\vert\geq c\varepsilon_0^n,$$
contradicting the $L^1_{loc}$ convergence of $E_k$ to $E$.

\end{proof}
We will prove later that $\p E \cap \Omega$ has in fact $n-1$
Hausdorff dimension.

\

\section{{\bf The Euler-Lagrange equation in the viscosity sense}}

\

As we pointed out in the introduction, the Euler-Lagrange equation
for $H^{s/2}$ minimization is the $(s/2)$-Laplacian. The theorem
below can be thought as saying

$$\triangle^{s/2}(\chi_E-\chi_{\mathcal {C} E})=0 \quad \mbox{along $\p E$.}$$

\begin{thm}{\label{E-L}} Assume $E$ is a supersolution, $0 \in
\partial E$
and the unit ball $B_1(-e_n)$ is included in $E$. Then

$$ \int_{\R^n} \frac{\chi_E-\chi_{\mathcal{C}E}}{|x|^{n+s}}dx
\le 0.$$

\end{thm}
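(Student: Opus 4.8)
The plan is to exploit the supersolution inequality (\ref{sup}) by choosing a well-designed competitor set $A$ near the boundary point $0$, and then to let $A$ shrink to nothing so that the relevant differences of $L$-terms converge to the desired principal-value integral. Concretely, pick a small ball $A = B_\rho(-\rho e_n)$ or, more robustly, a small half-ball-like region $A_\rho \subset \mathcal C E \cap \Omega$ sitting just on the complement side of $0$, scaling like $\rho$. Since $B_1(-e_n) \subset E$ we know a definite neighborhood on one side is inside $E$, so such $A_\rho$ can be constructed and $A_\rho \subset \mathcal C E$. The inequality (\ref{sup}) gives $L(A_\rho, E) \le L(A_\rho, \mathcal C(E \cup A_\rho))$. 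Dividing by $|A_\rho|$ and passing to the limit $\rho \to 0$, I expect the left side to tend to $\int_{\R^n} \chi_E(x)|x|^{-n-s}\,dx$ (evaluated as a density limit at $0$) and the right side to $\int_{\R^n}\chi_{\mathcal C E}(x)|x|^{-n-s}\,dx$, which rearranges to the claim $\int (\chi_E - \chi_{\mathcal C E})|x|^{-n-s}\,dx \le 0$.

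The key steps, in order, would be: (1) Normalize and set up the competitor $A_\rho$ inside $\mathcal C E \cap \Omega$ near $0$; here one must be slightly careful that $A_\rho$ is genuinely in $\mathcal C E$, using only that $0 \in \partial E$ means every neighborhood of $0$ meets $\mathcal C E$ in positive measure, combined with the one-sided ball condition to control geometry. Actually it is cleaner to take $A_\rho$ to be a tiny ball centered at a point $z_\rho \in \mathcal C E$ with $|z_\rho| \to 0$; by the clean ball-type reasoning there are complement points arbitrarily close to $0$. (2) Write out $L(A_\rho, E)$ and $L(A_\rho, \mathcal C(E\cup A_\rho))$ as integrals, factor out the size of $A_\rho$, and identify the integral kernels. (3) Show the error from replacing $\mathcal C(E \cup A_\rho)$ by $\mathcal C E$ is negligible: $L(A_\rho, A_\rho) = O(\rho^{n-s})$ which is $o(|A_\rho|) = o(\rho^n)$ since $s < 1$, so it vanishes after dividing by $|A_\rho|$. (4) Justify that the singular integral $\int |x - z_\rho|^{-n-s}(\chi_E - \chi_{\mathcal C E})(x)\,dx$ converges, as $z_\rho \to 0$, to the principal-value integral at $0$; this is where one needs the $C^{1,1}$-type barrier hypothesis — the inclusion $B_1(-e_n) \subset E$ guarantees the integral is not $-\infty$ and gives the local cancellation near $0$ that makes the principal value finite and stable under the limit.

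The main obstacle I anticipate is step (4): controlling the near-singularity behavior of the integral uniformly as the base point moves to $0$. The integrand $|x|^{-n-s}$ is not integrable near the origin, and only the cancellation $\chi_E - \chi_{\mathcal C E}$ coming from the fact that $\partial E$ separates the two phases with a one-sided tangent ball makes things finite. One must show that $\int_{B_\delta(0)} |x - z_\rho|^{-n-s} (\chi_E - \chi_{\mathcal C E})\,dx$ is bounded uniformly in small $\rho$ and tends, as $\delta, \rho \to 0$ appropriately, to the analogous quantity for the principal value at $0$; the barrier ball reduces this to comparing $E$ near $0$ with a half-space up to a quadratic error, for which the kernel $|x|^{-n-s}$ with $s<1$ is exactly on the integrable side. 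Handling the interaction of this local estimate with the shrinking competitor $A_\rho$ (whose own diameter is comparable to $|z_\rho|$) requires choosing the scales so that $\mathrm{diam}(A_\rho) \ll |z_\rho| \to 0$, so that $A_\rho$ looks like a point mass at $z_\rho$ while $z_\rho$ itself is still safely away from the singularity during the limiting argument.
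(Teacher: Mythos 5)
Your overall scheme---perturb by a set $A_\rho\subset\mathcal C E\cap\Omega$ concentrating at the origin, apply (\ref{sup}), divide by $|A_\rho|$ and pass to the limit---is in the right spirit (the paper also normalizes by $|A|$), but two of your steps fail as stated, and the failure points to the one idea that is genuinely missing. First, step (3) is wrong on both counts: $L(A_\rho,A_\rho)=\int_{A_\rho}\int_{A_\rho}|x-y|^{-n-s}\,dx\,dy$ is $+\infty$ for \emph{any} set of positive measure, since $\int_{|y-x|<1}|x-y|^{-n-s}\,dy$ diverges for every $s>0$ (the condition $s<1$ only tames the kernel transversally to a hypersurface, not across the diagonal of a solid set); and even the claimed bound $O(\rho^{n-s})$ would not be $o(|A_\rho|)$, because $\rho^{n-s}/\rho^{n}=\rho^{-s}\to\infty$. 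This is precisely why (\ref{sup}) is written with $\mathcal C(E\cup A)$ rather than $\mathcal C E$: the self-interaction cannot be treated as an error term. Second, in step (4) the two quantities you want the normalized sides to converge to, $\int_E|x|^{-n-s}dx$ and $\int_{\mathcal C E}|x|^{-n-s}dx$, are each $+\infty$ (already $\int_{B_1(-e_n)}|x|^{-n-s}dx=\infty$), so "rearranging" them is an $\infty-\infty$ manipulation; the statement only makes sense as a principal value, and the principal value based at your point $z_\rho$ is generically $-\infty$, since at an interior (in measure) point of $\mathcal C E$ the integrand equals $-|x-z_\rho|^{-n-s}$ on a full neighborhood. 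The tangent ball hypothesis gives cancellation at $0$, not at $z_\rho$, so nothing uniform in $\rho$ passes to the limit.

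The missing idea is a cancellation mechanism built into the choice of $A$ itself. The paper takes $A^-:=B_{1+\eps}(-e_n)\setminus E$ (all of the complement inside a slightly enlarged tangent ball, not a small ball at a complement point), reflects it radially across the sphere $\partial B_{1+\eps}(-e_n)$ to produce $A^+$, and perturbs by $A=A^-\cup A^+$ together with the reflected set $F=T(B_\delta\cap\mathcal C(E\cup A))$. Because $A$ is (up to a remainder $A_2$) symmetric under $T$ and $T$ is an approximate isometry near the sphere, the non-integrable near-boundary contributions cancel in pairs, leaving the principal value at $0$ plus errors of size $\max\{d_x,d_y\}\,|x-y|^{-n-s}$; the remaining term $\eps\,L(A^-,F)$ is controlled along a sequence $\eps\to 0$ by the uniform density estimate (Lemma \ref{e-l_lemma}). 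Your proposal contains neither the reflection nor any substitute for it, so the core analytic difficulty---extracting a finite, signed principal value from two individually divergent integrals---is not addressed. If you want to salvage a perturbation-at-a-point argument, you would have to build the symmetry into $A_\rho$ from the start; as written, the limit in your step (2) does not exist.
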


\

In order to fix ideas, we prove first a comparison principle
between $\p E$ and the hyperplane $\{x_n=0\}$. The same techniques
will be used in the proof of Theorem \ref{E-L}. More precisely,
assume $E$ is a minimizer in $B_1$ and $ \{x_n \le 0\} \setminus
B_1 \subset E$. We want to show that $\{x_n \le 0\} \subset E$.
Define

$$A^-:=\{x_n \le 0\} \setminus E,$$
then from the minimality of $E$ we obtain

$$ 0 \ge L(A^-,E) - L(A^-,\mathcal C (E\cup A^-)).$$
It is not obvious that we reach a contradiction if $|A^-|>0$. We
would like to consider another set as perturbation and make use of
symmetry in order to obtain cancellations in the integrals.

For this let $T$ be the reflection across $\{x_n=0\}$ i.e.
$T(x',x_n)=(x',-x_n)$ and let

$$A^+=T(A^-) \setminus E.$$
Define $$A=A^- \cup A^+,$$ and decompose it into two sets: $A_1$
which is symmetric with respect to $\{x_n=0\}$ and the remaining
part $A_2 \subset A^-$ i.e,

$$A_1=A^+ \cup T(A^+), \quad A_2=A^- \setminus T(A^+).$$

Finally, let $F$ be the reflection of $\mathcal C (E \cup A)$,
then from our hypothesis $$F \subset \{x_n \le 0\} \subset E.$$
The minimality of $E$ implies

$$0 \ge L(A, E)-L(A, \mathcal C (E \cup A))=\sum (L(A_i, E)-L(A_i, \mathcal C (E \cup A)))$$

$$=L(A_1, E \setminus F)+L(A_2, E\setminus F) +
(L(A_2,F)-L(T(A_2),F)).$$ All three terms are nonnegative and are
$0$ only if $|A_2|=0$ and either $|A_1|=0$ or $|E\setminus F|=0$.
At this point we remark that we can repeat the argument above for
the hyperplane $\{x_n=-\eps\}$ instead of $\{x_n=0\}$ and in this
case $|E\setminus F| >0$. In conclusion we obtain $|A^-|=0$ which
proves the comparison principle.

\

We are now ready to prove Theorem \ref{E-L}. Again we consider
symmetric sets as perturbations by using the radial reflection
across a sphere. The proof is more involved since the
cancellations have now error terms but they are balanced by using
the positive density property.

\

{\bf Proof of Theorem \ref{E-L}:} Without loss of generality
assume that $E$ contains $B_2(-2e_n)$. We will show

$$ \limsup_{\delta \to 0} \int_{\mathbb{R}^n \setminus B_\delta}
\frac{\chi_E-\chi_{\mathcal{C}E}}{|x|^{n+s}}dx \le  0.$$

\

Fix $\delta>0$ small, and $\eps \ll \delta$.

We denote by $d_x$ the distance from $x$ to the sphere $\partial
B_{1+\eps}(-e_n)$.

Let $T$ be the radial reflection with respect to the sphere
$\partial B_{1+\eps}(-e_n)$ in the annulus $1- 2 \delta < d_x < 1
+ 2 \delta$ i.e,

$$\frac{x+Tx}{2}+e_n=(1+\eps) \frac{x+e_n}{|x+e_n|},$$
and notice that

$$|DT(x)| \le 1 + 3 d_x, \quad |T(x)-T(y)| \ge (1-3
\max \{d_x,d_y\})|x-y|.$$

We define various sets:

 $$A^-:=B_{1+\eps}(-e_n) \setminus E$$

$$A^+:=T(A^-) \setminus E, \quad A:= A^- \cup A^+.$$

\

We decompose $A$ into two disjoint sets $A_1$ and $A_2$, with
$A_1=T(A_1)$,

$$A=A_1 \cup A_2,\quad A_1:=T(A^+) \cup A^+, \quad A_2 \subset A^- \subset B_{1+
\eps}(-e_n).$$

and define

$$F:=T(B_\delta \cap \mathcal{C}(E \cup A)).$$

It is easy to check that

$$F \subset B_{1+ \eps}(-e_n) \setminus A^- \subset E \cap B_\delta.$$

We have

$$ L(A,E)-L(A,\mathcal{C}(E \cup A)= $$

$$ [L(A, E \setminus B_\delta) -L(A,\mathcal{C}E \setminus B_\delta)]+
[L(A, F)-L(A, T(F))] + L(A,(E \cap B_\delta) \setminus F) $$

$$:= I_1+I_2 +I_3 \le 0.$$

Since $I_3 \ge 0$ we obtain

$$I_1+I_2 \le 0.$$

\

\

We estimate $I_1$ by using that $A \subset B_{2 \sqrt \eps}$, thus

\

$$\left | \frac{1}{|A|}I_1 -\int_{\mathbb{R}^n \setminus B_\delta}
\frac{\chi_E-\chi_{\mathcal{C}E}}{|y|^{n+s}}dy \right | \le C
\int_{\mathbb{R}^n \setminus B_\delta}\frac{\sqrt
\eps}{|y|^{n+s+1}}dy$$

\begin{equation}{\label{1}}
\le C \eps^{1/2} \delta^{-1-s}.
\end{equation}

\

\

To estimate $I_2$ we write

$$I_2=[L(A_1, F) - L(A_1, T(F))]+[L(A_2, F)-L(A_2,
T(F)].$$

\

By changing the variables $x\to Tx$, $y \to Ty$ we have

\

$$ L(A_1, T(F))= \int \int  \chi_{
A_1}(x)\chi_{F}(y) \frac{|DT(x)||DT(y)|}{|Tx-Ty|^{n+s}} dxdy$$

\

$$\le \int \int \chi_{A_1}(x)\chi_{F}(y) \frac{ 1+C \max
\{d_x,d_y\}}{|x-y|^{n+s}}dxdy.$$

\

Also by changing $y \to Ty$ we find

\

$$L(A_2,T(F))= \int \int \chi_{A_2}(x) \chi_{F}(y)
\frac{|DT(y)|}{|x-Ty|^{n+s}}dxdy $$

\

$$\le \int \int \chi_{A_2}(x) \chi_{F}(y) \frac{1+C d_y}{|x-y|^{n+s}}dxdy
$$
and we have used that

\begin{equation}{\label{2}}
|x-y|\le |x-Ty| \quad \mbox{for $x, y \in B_{1+\eps}$}.
\end{equation}

We conclude that

\

$$-I_2 \le C  \int \int \chi_{A}(x)\chi_{F}(y) \frac{ \max
\{d_x,d_y\}}{|x-y|^{n+s}}dxdy. $$

\

 We estimate the contribution in the integral above for $x$ outside  $B_{1+\eps}(-e_n)$, i.e. $x \in
A^+$, by changing $x \to Tx$ and using (\ref{2})

\

$$\int \int \chi_{A^+}(x)\chi_{F}(y)\frac{\max
\{d_x,d_y\}}{|x-y|^{n+s}}dxdy$$

\

\

$$\le \int \int \chi_{A^-}(x)\chi_{F}(y) \frac{ \max
\{d_x,d_y\}|DT(x)|}{|Tx-y|^{n+s}}dxdy $$

\

\

$$ \le 2 \int \int \chi_{A^-}(x)\chi_{F}(y) \frac{ \max
\{d_x,d_y\}}{|x-y|^{n+s}}dxdy.$$

\

Hence

$$-I_2 \le C \int \int  \chi_{ A^-}(x)\chi_{F}(y) \frac{
\max\{d_x,d_y\}}{|x-y|^{n+s}}dxdy.$$

\

For fixed $x \in A^-$

$$\int_{B_\delta \setminus B_{2d_x}(x)}
\frac{\max\{d_x,d_y\}}{|x-y|^{n+s}}dy \le C\int_{2d_x}^ {2
\delta}\frac{r}{r^{n+s}}r^{n-1}dr \le C \delta ^{1-s}.$$

\

When $y \in B_{2d_x}(x)$, $$\max\{d_x,d_y\} \le 3d_x \le 3 \eps,$$
thus

$$
-I_2 \le C \delta^{1-s} |A| + C \eps \int \int \frac{ \chi_{
A^-}(x)\chi_{F}(y)}{|x-y|^{n+s}}dxdy $$

\begin{equation}{\label{3}}
=C \delta^{1-s} |A|+ C \eps L(A^-, F).
\end{equation}

\

We will prove the following lemma:

\begin{lem}{\label{e-l_lemma}} There exists a sequence of $\eps \to 0$ such that

$$\eps L(A^-, F) \le C \eps^\eta |A^-|$$
where $\eta$ is such that $0< \eta < 1-s$.

\end{lem}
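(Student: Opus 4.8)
The plan is to prove the lemma by contradiction, after first peeling off the part of $F$ that causes no trouble at any scale. Write $A^-=A^-_\eps$, $F=F_\eps$, and record the facts that are already at our disposal: since $E$ is a supersolution, $\mathcal C E$ is a subsolution, so the density estimate (Theorem \ref{un_de}) applies to $\mathcal C E$; applied at $0\in\partial E$ at scale $\eps$, together with $B_\eps(0)\subset B_{1+\eps}(-e_n)$, it gives $|A^-|\ge c\,\eps^n$. Because $B_1(-e_n)\subset E$ one has $A^-\subset B_{C\sqrt\eps}$ and $A^-\subset\{1\le |x+e_n|<1+\eps\}$, while $F\subset E\cap B_{1+\eps}(-e_n)\cap B_{C\delta}$ and $F\cap A^-=\emptyset$. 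Finally, applying the subsolution inequality (\ref{sub}) to $\mathcal C E$ with $A=\mathcal C E\cap B_r$ and using $L(\mathcal C E\cap B_r,\mathcal C E\setminus B_r)\le L(B_r,\mathcal C B_r)\le C r^{n-s}$ gives $L(\mathcal C E\cap B_r,E)\le C r^{n-s}$.

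First I would split $F=(F\setminus \widetilde F)\cup\widetilde F$, where $\widetilde F=\{y\in F:\ d_y\le 2\eps\}$ and $d_y$ denotes the distance of $y$ to the reflection sphere $\partial B_{1+\eps}(-e_n)$. If $y\in F\setminus\widetilde F$ then $y$ sits more than $2\eps$ inside that sphere, whereas every $x\in A^-$ lies within $\eps$ of it, so $|x-y|\ge \tfrac12 d_y$; combining this with the confinement $F\subset B_{C\delta}$ (hence $F$ has angular opening $\lesssim\delta$), a dyadic decomposition of $F\setminus\widetilde F$ in the variable $d_y$ yields $\int_{F\setminus\widetilde F}|x-y|^{-n-s}\,dy\le C\eps^{-s}$ for all $x\in A^-$, and therefore $L(A^-,F\setminus\widetilde F)\le C\eps^{-s}|A^-|$. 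Since $\eta<1-s$ this is $o(\eps^{\eta-1}|A^-|)$, so it is acceptable for every small $\eps$, and we are reduced to estimating $L(A^-,\widetilde F)$.

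Here $A^-\subset \mathcal C E$ and $\widetilde F\subset E$ both live in $B_{C\sqrt\eps}$ in shells of thickness $\sim\eps$ about $\partial B_{1+\eps}(-e_n)$, and they are disjoint but with no a priori separation, since $\partial E$ sits between them. To gain the missing power I would argue by contradiction. If the lemma fails there is $\eps_0>0$ with $L(A^-_\eps,F_\eps)>C\eps^{\eta-1}|A^-_\eps|$ for all $\eps\in(0,\eps_0)$, hence (by the previous paragraph) $L(A^-_\eps,\widetilde F_\eps)>c\,\eps^{\eta-1}|A^-_\eps|$ on $(0,\eps_0)$. Using $\int_{\widetilde F_\eps}|x-y|^{-n-s}dy\le C\,\mathrm{dist}(x,\partial E)^{-s}$ and then integrating this reversed inequality in $\eps$ against a suitable weight $w(\eps)$ --- chosen so that, thanks to $|A^-_\eps|\ge c\eps^n$, the right-hand side integrates to $+\infty$ --- I would interchange the order of integration: because $\chi_{A^-_\eps}(x)=\chi_{\mathcal C E}(x)\,\chi_{\{|x+e_n|-1<\eps\}}$, the $\eps$-integral can be carried out first, reducing the left-hand side (after the co-area formula in the level sets $\partial B_{1+t}(-e_n)$ of $x\mapsto |x+e_n|-1$) to a one-dimensional integral involving only the distribution of $\mathrm{dist}(\cdot,\partial E)$ on those spheres, which the clean ball / covering estimates of Corollary \ref{cl_ba} control; obtaining a finite value would contradict the divergence of the right-hand side.

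The hard part is this last step. Bounding $\int_{\widetilde F_\eps}|x-y|^{-n-s}dy$ merely by $\mathrm{dist}(x,\partial E)^{-s}$ --- equivalently, bounding $L(A^-,\widetilde F)\le L(A^-,E)$ --- only reproduces the insufficient exponent $(n-s)/2$; one must instead exploit that $\widetilde F_\eps$ is a \emph{thin} subset of $E$ (a shell of thickness $\sim\eps$), which is what improves the gain to the $\eps^{1-s/2}$ one expects (e.g. when $\partial E$ is a hyperplane, $L(A^-_\eps,\widetilde F_\eps)\sim\eps^{(n+1)/2-s}$ and $|A^-_\eps|\sim\eps^{(n+1)/2}$, so $\eps L/|A^-|\sim\eps^{1-s}$). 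The reason the conclusion is only stated along a sequence $\eps_k\to 0$ is precisely that $A^-_\eps$ may, for exceptional $\eps$, hug $\partial E$ (hence $\widetilde F_\eps$) so tightly that $\int_{A^-_\eps}\mathrm{dist}(x,\partial E)^{-s}dx$ is anomalously large; only after averaging in $\eps$ does one reach scales at which $\mathrm{dist}(\cdot,\partial E)^{-s}$ is integrable against $A^-_\eps$ with the right constant. A convenient, essentially dimension-free choice of weight is $w(\eps)=1/(\eps\,|A^-_\eps|)$, admissible because $\eps\mapsto|A^-_\eps|$ is monotone.
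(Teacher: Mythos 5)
Your setup is sound, and your final averaging-in-$\eps$ scheme (argue by contradiction, integrate the reversed inequality in $\eps$, and play it against the lower bound $|A^-_\eps|\ge c\eps^n$ coming from the density estimate applied to $\mathcal C E$) is exactly the mechanism the paper uses to produce the good sequence $\eps_k\to 0$. But the decisive estimate is left unproved, and you say so yourself: after discarding $F\setminus\widetilde F$ you still need a bound on $L(A^-,\widetilde F)$ that beats $\mathrm{dist}(\cdot,\p E)^{-s}$, you correctly observe that the naive bound only yields the insufficient exponent, and the proposed repair (``exploit that $\widetilde F_\eps$ is thin'', ``reduce to a one-dimensional integral controlled by the covering estimates'') is never carried out. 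As written this is a plan with a hole exactly where the main inequality should be; note also that Corollary \ref{cl_ba} is stated for minimizers, whereas here $E$ is only assumed to be a variational supersolution.

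The missing idea is that you should not estimate the interaction of $A^-$ with $F$, with $E$, or with $\p E$ at all. Apply the supersolution inequality (\ref{sup}) to the admissible set $A=A^-\subset\mathcal C E\cap\Omega$: since $F\subset E$ and $E\cup A^-\supset B_{1+\eps}(-e_n)$,
$$L(A^-,F)\le L(A^-,E)\le L(A^-,\mathcal C(E\cup A^-))\le L(A^-,\mathcal C B_{1+\eps}(-e_n))\le C\int_{A^-}d_x^{-s}\,dx,$$
where $d_x$ is now the distance to the \emph{sphere} $\p B_{1+\eps}(-e_n)$, not to $\p E$. Writing $a(r)=\mathcal H^{n-1}(\mathcal C E\cap\p B_{1+r}(-e_n))$, the right-hand side equals $C\int_0^\eps a(r)(\eps-r)^{-s}\,dr$ while $|A^-|=\int_0^\eps a(r)\,dr$, so the lemma reduces to the purely one-dimensional claim that $\eps\int_0^\eps a(r)(\eps-r)^{-s}dr\le\eps^\eta\int_0^\eps a(r)dr$ along a sequence $\eps\to0$. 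This is where your averaging belongs: assuming the reverse inequality for all small $\eps$, integrating in $\eps$ over $(0,\lambda)$ and using Fubini gives $\lambda^{1-s}\int_0^\lambda a\ge c\lambda^\eta\int_0^{\lambda/2}a$, hence $\int_{\lambda/2}^\lambda a\ge M\int_{\lambda/4}^{\lambda/2}a$ for any fixed $M$ once $\lambda$ is small; iterating over $\lambda=2^{-k}$ forces $|\mathcal C E\cap B_{1+2^{-k}}(-e_n)|$ to decay faster than geometrically, contradicting the density bound you already invoked. So your last paragraph contains the right engine, but it can only run after the reduction through (\ref{sup}), which is the step your proposal lacks.
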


Now the proof of the theorem follows. Indeed, since
$$I_1/|A| \le -I_2/|A|$$ we let $\eps \to 0$ and use (\ref{1}),
(\ref{3}) and the lemma to conclude

$$\int_{\R^n \setminus B_\delta}\frac{\chi_E-\chi_{\mathcal C
E}}{|y|^{n+s}}dy\le C \delta^{1-s}.$$

\qed

\

{\it Proof of Lemma \ref{e-l_lemma}:} We use (\ref{sup}) for $A^-$
and find

$$L(A^-, F) \le  L(A^-, E) \le L(A^-, \mathcal {C}(E \cup A)
\le L(A^-, \mathcal {C} (B_{1+\eps}(-e_n)).$$

\

If $x \in B_{1+\eps}(-e_n)$ then

$$\int_{\mathcal {C}  B_{1+\eps}(-e_n)}
\frac{1}{|x-y|^{n+s}} dy \le C \int _{d_x}^\infty
\frac{1}{r^{n+s}}r^{n-1}dr \le C d_x^{-s}.$$

We denote

$$a(r):= \mathcal {H} ^{n-1} (\mathcal {C} (E \cap \partial
B_{1+r}(-e_n))),$$

and prove that for a sequence of $\eps \to 0$

\

$$\eps \int_0^\eps a(r) (\eps -r)^{-s} dr \le \eps^\eta \int_0^\eps a(r)
dr.$$

\

Assume by contradiction that for all $\eps$ small we have the
opposite inequality i.e.

$$\int_0^\eps a(r) (\eps -r)^{-s} dr > \eps^{\eta-1} \int_0^\eps a(r)
dr.$$

Integrating in $\eps$ between $0$ and $\lambda$ we find

$$\lambda^{1-s}\int_0^\lambda a(r)dr \ge c(s, \eta) \lambda
^\eta \int_0^{\lambda /2}a(r) dr$$
hence, for any fixed $M>0$

 $$\int_{\lambda/2}^\lambda a(r)dr \ge M
\int_{\lambda/4}^{\lambda /2} a(r)dr$$
provided that $\lambda$ is
small. Writing this inequality for $\lambda=2^{-k}$, $k \ge k_0$
we obtain

$$|\mathcal{C}E \cap B_{1+2^{-k}}(-e_n)| =\int_{0}^{2^{-k}} a(r)dr \le
(M/2)^{k_0-k}$$ for all $k \ge k_0$.

On the other hand, positive density of the complement at $0$ gives

$$|\mathcal{C}E \cap B_{1+2^{-k}}(-e_n)|\ge |\mathcal{C}E \cap
B_{2^{-k}}| \ge c2^{-nk}$$
and we reach a contradiction if we
choose $M>2^{n+1}$.

\qed

\

Some consequences of the Euler-Lagrange equation are the following.

\begin{cor}

a) If $E \cap \mathcal{C}\Omega$ is contained in the strip $\{a\le
x_n \le b\}$, then $E$ is contained in the same strip.

b) Hyperplanes are local minimizers.

c) If $x_0 \in \p \Omega \cap \p E$ and $\mathcal{C} \cap E$ has
an interior tangent ball at $x_0$, then $E$ is a viscosity
supersolution at $x_0$.
\end{cor}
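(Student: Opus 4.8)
The plan is to deduce all three statements from Theorem \ref{E-L} together with the clean ball condition (Corollary \ref{cl_ba}) and the comparison-principle argument sketched just before the proof of Theorem \ref{E-L}.

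For part (a), I would argue by contradiction using a sliding argument. Suppose $E\cap\mathcal C\Omega\subset\{a\le x_n\le b\}$ but $E$ is not contained in $\{a\le x_n\le b\}$; say there is a point of $E$ with $x_n>b$ (the case $x_n<a$ is symmetric). Consider the hyperplanes $\{x_n=t\}$ for $t$ slightly below $b$: for $t$ large these do not touch $\bar E$ from above inside $\Omega$ (since outside $\Omega$ the set $E$ lies below $x_n=b$), while by assumption for some $t$ they do. Let $t_0$ be the first contact value, so $\{x_n> t_0\}$ contains no point of $E$ and there is a touching point $x_0\in\partial E$ with a half-space $\{x_n< t_0\}$ playing the role of the ball $B_1(-e_n)$ in Theorem \ref{E-L} (one should first inflate to a genuine interior tangent ball, which exists by the clean ball condition, or pass to the limit as the radius tends to infinity). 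At $x_0$, since $E\subset\{x_n\le t_0\}$ while outside $\Omega$ we even have $E\subset\{x_n\le b< t_0\}$ (strictly, if we choose $t_0$ between $b$ and the top of $E$), one computes
$$\int_{\R^n}\frac{\chi_E-\chi_{\mathcal C E}}{|x-x_0|^{n+s}}\,dx< 0$$
is impossible: the half-space above contributes $\mathcal C E$ entirely while a strictly larger region than a half-space below is needed, forcing the integral to be strictly negative, contradicting Theorem \ref{E-L} unless $\partial E$ agrees with the hyperplane near $x_0$ — and the clean ball condition rules out $\partial E$ containing a flat piece with $\mathcal C E$ only on one side. Iterating down to $t_0=b$ gives the claim. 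The main obstacle here is the standard one for sliding arguments with nonlocal operators: the touching point analysis requires that the tangent plane actually be touched, which needs the a priori density/clean-ball information to promote "touching in the closure" to a genuine geometric tangency, and one must handle the degenerate case where the contact set is large.

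For part (b), I would simply verify that if $E=\{x_n\le 0\}$ then at every boundary point $x_0\in\{x_n=0\}$ we have, by oddness of $\chi_E-\chi_{\mathcal C E}$ under $x\mapsto 2x_0-x$ about $x_0$, that $\int_{\R^n\setminus B_\delta}\frac{\chi_E-\chi_{\mathcal C E}}{|x-x_0|^{n+s}}dx=0$ for all $\delta$; hence the hyperplane satisfies the Euler–Lagrange equation with equality, and the comparison argument preceding Theorem \ref{E-L} (which shows a competitor cannot lie on one side of a hyperplane unless it coincides with it) upgrades this to local minimality — more carefully, one checks directly from Definition \ref{2.1} that for any competitor $F$ agreeing with $\{x_n\le 0\}$ outside $\Omega$, the reflection symmetry makes the cross terms cancel and $\mathcal J_\Omega(F)\ge\mathcal J_\Omega(\{x_n\le 0\})$, with the variational sub/supersolution inequalities (\ref{sub}),(\ref{sup}) holding by the odd-reflection cancellation exactly as in the comparison argument in Section 5. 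Part (c) is essentially a restatement of Theorem \ref{E-L}: if $x_0\in\partial\Omega\cap\partial E$ and $\mathcal C E$ (I read "$\mathcal C\cap E$" as a typo for the tangent ball being on the $E$ side, i.e. $E$ has an interior tangent ball at $x_0$), then that tangent ball plays the role of $B_1(-e_n)$, and the definition of viscosity supersolution at $x_0$ is precisely the conclusion $\int\frac{\chi_E-\chi_{\mathcal C E}}{|x-x_0|^{n+s}}dx\le 0$ of Theorem \ref{E-L}, once one notes the theorem's proof only used the supersolution property of $E$ inside $\Omega$ and the existence of the tangent ball, both available here; the fact that $x_0\in\partial\Omega$ rather than in the interior is irrelevant since the perturbations $A^-,A^+,F$ used in that proof are supported in an arbitrarily small neighborhood of $x_0$, hence eventually inside any fixed domain containing $x_0$ in its closure, and the supersolution inequality (\ref{sup}) is used only for such small sets. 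I expect part (a) to be the genuine content requiring care; (b) and (c) follow quickly from symmetry and from Theorem \ref{E-L} respectively.
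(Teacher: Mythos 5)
The paper states this corollary without proof; the intended derivation is a direct application of the comparison principle established at the start of Section 5 together with Theorem \ref{E-L}. For (a) this is essentially two lines: since $E$ minimizes, both $E$ and $\mathcal{C}E$ are variational supersolutions, and the hypothesis gives $\{x_n> b\}\setminus\Omega\subset\mathcal{C}E$ and $\{x_n< a\}\setminus\Omega\subset\mathcal{C}E$, so the comparison principle applied to $\mathcal{C}E$ (with the half-spaces $\{x_n>b\}$ and $\{x_n<a\}$) yields $E\subset\{a\le x_n\le b\}$. Your sliding argument can probably be repaired, but as written it applies Theorem \ref{E-L} to the wrong set: at the top contact point the tangent ball lies in $\mathcal{C}E$, so the theorem must be applied to $\mathcal{C}E$, giving $\int(\chi_E-\chi_{\mathcal{C}E})|x-x_0|^{-n-s}dx\ge 0$, and it is \emph{this} that contradicts the strict negativity coming from the slab $\{b<x_n<t_0\}\setminus\Omega$; moreover you do not handle a contact point falling on $\p\Omega$, which is precisely where (c) would be needed.

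The genuine gap is part (b). Satisfying the Euler--Lagrange equation with equality does not imply minimality, and the ``odd-reflection cancellation'' does not verify (\ref{sup}) for $H=\{x_n\le 0\}$: for $A\subset\{x_n>0\}\cap\Omega$ the pointwise bound $|x-Ty|\ge|x-y|$ only gives $L(A,H)\le L(A,\{x_n>0\})=L(A,\mathcal{C}(H\cup A))+L(A,A)$, leaving an uncancelled $L(A,A)$; testing with $A=\{0<x_n<M\}\cap\{|x'|<R\}$, $R\gg M$, shows the desired inequality is nearly an equality, so no crude pointwise reflection bound can close it (the paper's decomposition into $A_1,A_2$ works only for the special symmetrized perturbation used in the comparison principle, not for arbitrary $A$). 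The clean argument is: by Section 3 there exists a minimizer $F$ with exterior datum $H\cap\mathcal{C}\Omega$; the comparison principle applied to $F$ and to $\mathcal{C}F$ forces $F=H$ a.e., hence $H$ itself is a minimizer. Finally, in (c) your justification that the perturbation sets of Theorem \ref{E-L} are ``eventually inside any fixed domain containing $x_0$ in its closure'' is false when $x_0\in\p\Omega$, since every neighborhood of $x_0$ meets $\mathcal{C}\Omega$; the admissibility of $A^{\pm}$ in (\ref{sup}) at such a point is exactly the issue, and it is resolved by using the hypothesis on the exterior datum $E\cap\mathcal{C}\Omega$ near the tangent sphere, not by localization alone.
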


Finally, an important observation is the following comparison
tool.

\begin{lem}

 Let $E_\delta$ be the $\delta$ neighborhood of $E$, i.e.

$$ E_\delta=\{ x \ | \ dist(x, E) \le \delta\}.$$
Then if $x_0 \in \p E_\delta$ realizes its distance at $y_0\in \p E$ then $E$ has at $y_0$ an external tangent ball and

$$\int_{\R^n} \frac{\chi_{E_\delta}-\chi_{\mathcal{C}E_\delta}}{|x-x_0|^{n+s}}dx \ge \int_{\R^n} \frac{\chi_E-\chi_{\mathcal{C}E}}{|x-y_0|^{n+s}}dx$$
in the principal value sense.

In particular, if $E$ is a viscosity solution at $y_0$ then $E_\delta$ is a viscosity subsolution at $x_0$.

\end{lem}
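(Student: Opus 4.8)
The plan is to fix the geometry first: since $x_0\in\partial E_\delta$ realizes its distance to $E$ at $y_0\in\partial E$, the open ball $B_\delta(x_0)$ is disjoint from $E$ by definition of $E_\delta$, and $y_0\in\partial B_\delta(x_0)$; this is exactly the statement that $E$ has an external tangent ball $B_\delta(x_0)$ at $y_0$. Writing $x_0-y_0=\delta\nu$ for the outward unit vector $\nu$, the whole problem is invariant under the translation $z\mapsto z+\delta\nu$, so it is natural to compare the two integrals by this translation. Set $E'=E+\delta\nu$. Then $y_0+\delta\nu=x_0$, and after the change of variables $x\mapsto x+\delta\nu$ in the second integral we get
$$\int_{\R^n}\frac{\chi_E-\chi_{\mathcal C E}}{|x-y_0|^{n+s}}\,dx=\int_{\R^n}\frac{\chi_{E'}-\chi_{\mathcal C E'}}{|x-x_0|^{n+s}}\,dx,$$
in the principal value sense (both principal values are taken at the same point $x_0$, so the symmetric cancellation near $x_0$ is unaffected). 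Hence the claimed inequality reduces to
$$\int_{\R^n}\frac{\chi_{E_\delta}-\chi_{\mathcal C E_\delta}}{|x-x_0|^{n+s}}\,dx\ \ge\ \int_{\R^n}\frac{\chi_{E'}-\chi_{\mathcal C E'}}{|x-x_0|^{n+s}}\,dx,$$
which, subtracting, amounts to showing $\int (\chi_{E_\delta}-\chi_{E'})|x-x_0|^{-n-s}\,dx\ge 0$ (the kernel is now integrable near $x_0$ after the cancellation, or one keeps the principal value; either way the difference of the two integrands is integrable).

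The key geometric fact to establish is the pointwise inclusion $E'=E+\delta\nu\subset E_\delta$. Indeed if $z\in E$ then $z+\delta\nu$ has distance at most $\delta$ from $E$, so $z+\delta\nu\in E_\delta$ by definition of the $\delta$-neighbourhood. Therefore $\chi_{E_\delta}\ge\chi_{E'}$ everywhere, hence $\chi_{E_\delta}-\chi_{\mathcal C E_\delta}\ge\chi_{E'}-\chi_{\mathcal C E'}$ everywhere, and integrating against the positive kernel $|x-x_0|^{-n-s}$ gives the desired inequality. The only mild care needed is the principal-value bookkeeping: near $x_0$ both configurations $E_\delta$ and $E'$ agree with the complement of the tangent ball $B_\delta(x_0)$ to leading order (for $E'$ because $B_\delta(x_0)$ is the translate of the external tangent ball of $E$ at $y_0$, for $E_\delta$ because $x_0\in\partial E_\delta$ and $B_\delta(x_0)\subset\mathcal C E_\delta$ as well, $x_0$ being at distance exactly $\delta$ from $E$). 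So the singular parts match and subtract cleanly, leaving an absolutely convergent integral with nonnegative integrand.

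For the final assertion: if $E$ is a viscosity solution at $y_0$ then $\int(\chi_E-\chi_{\mathcal C E})|x-y_0|^{-n-s}\,dx=0$, and the inequality just proved gives $\int(\chi_{E_\delta}-\chi_{\mathcal C E_\delta})|x-x_0|^{-n-s}\,dx\ge 0$, which is precisely the statement that $E_\delta$ is a viscosity subsolution at $x_0$ (with the external tangent ball $B_\delta(x_0)$ playing the role of the admissible test surface, since $B_\delta(x_0)\subset\mathcal C E_\delta$ touches $\partial E_\delta$ at $x_0$). The main obstacle I anticipate is not conceptual but is making the principal-value argument fully rigorous when $y_0$ is merely a boundary point of $E$ without extra regularity: one must check that the difference $\chi_{E_\delta}-\chi_{E'}$ is locally integrable against $|x-x_0|^{-n-s}$, which follows because both sets coincide near $x_0$ with $\mathcal C B_\delta(x_0)$ up to a set whose density at $x_0$ one controls — here the external tangent ball and the clean-ball/density estimates of Section 4 do the work.
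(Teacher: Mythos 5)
Your proof is correct and is essentially the paper's own argument --- the paper's entire proof is the remark that the result is ``straightforward after translating $E_\delta$ by $y_0-x_0$'', and your translation of $E$ by $x_0-y_0=\delta\nu$ together with the inclusion $E+\delta\nu\subset E_\delta$ is the same computation. One parenthetical claim in your principal-value discussion is false: $B_\delta(x_0)$ is \emph{not} contained in $\mathcal{C}E_\delta$ (points of $B_\delta(x_0)$ near $y_0$ are within distance $\delta$ of $E$, hence lie in $E_\delta$); this is harmless, though, because $\chi_{E_\delta}-\chi_{E+\delta\nu}=\chi_{E_\delta\setminus(E+\delta\nu)}\ge 0$ makes the truncated integrals over $\R^n\setminus B_r(x_0)$ comparable for every $r>0$, so the inequality passes to the principal-value limit with no need to match singular parts.
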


The proof is straightforward after translating $E_\delta$ by
$y_0-x_0$.

\

This lemma can be applied to prove for instance that minimizers are graphs under appropriate geometric conditions.

\

\section{\bf Improvement of flatness}

In this section we prove the following theorem, in the spirit of the 
regularity theorem of de Giorgi for classical minimal surfaces \cite{G}, Chap. 8:

\begin{thm}{\label{flat}}
Assume $E$ is minimal in $B_1$. There exists $\eps_0>0$ depending
on $s$ and $n$ such that if

$$\p E \cap B_1 \subset \{|x\cdot e_n|\le \eps_0\}$$
then $\p E \cap B_{1/2}$ is a $C^{1, \gamma}$ graph in the $e_n$
direction.
\end{thm}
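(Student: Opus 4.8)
The plan is to follow the classical De Giorgi strategy for minimal surfaces, adapted to the nonlocal setting, establishing an \emph{improvement of flatness} iteration: if $\partial E \cap B_1 \subset \{|x\cdot e_n| \le \eps\}$ for $\eps \le \eps_0$, then in a smaller ball $B_\theta$ (for some universal $\theta \in (0,1/2)$) the set $\partial E$ is trapped in a flatter strip $\{|x\cdot e'| \le \theta \eps/2\}$ with respect to a possibly tilted direction $e'$, with $|e-e'| \le C\eps$. Iterating this dyadically gives that at every point of $\partial E \cap B_{1/2}$ the normal direction is defined and Hölder continuous, hence $\partial E \cap B_{1/2}$ is a $C^{1,\gamma}$ graph. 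The first step is to rescale: blowing up a nearly-flat piece at scale $\eps$, one expects the (appropriately normalized) graph functions to converge, as $\eps \to 0$, to a solution of a limiting \emph{linearized} equation. The natural candidate is that the limit $w$ solves $(-\Delta)^{s/2}$-type equation $L_s w = 0$ in a half-ball, an integro-differential equation for which interior $C^{1,\gamma}$ (indeed $C^\infty$) estimates and a quadratic (or better) improvement-of-flatness at the linear level are available from the regularity theory of the fractional Laplacian.

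The key steps, in order, are: (1) \textbf{Compactness.} Using the uniform density estimates (Theorem \ref{un_de}) and the clean ball condition (Corollary \ref{cl_ba}), together with the improved compactness in Corollary \ref{c4.1}(ii) and Theorem \ref{cl_limit}, show that a sequence of minimizers $E_k$ flat in $B_1$ with flatness $\eps_k \to 0$ has boundaries converging (after the vertical dilation by $1/\eps_k$) uniformly to the graph of a limit function $w$ on $B_{1/2}'$. The density estimates are what prevent the boundaries from developing vertical pieces or collapsing, so the Hausdorff convergence is genuine. (2) \textbf{Passage to the limit in the Euler--Lagrange equation.} Using the viscosity form of the Euler--Lagrange equation (Theorem \ref{E-L}) together with the comparison lemma for $\delta$-neighborhoods at the end of Section 5, show that touching $\partial E_k$ from above/below by smooth surfaces passes to the limit, so that $w$ is a viscosity solution of the linearized equation $L_s w = 0$ in $B_{1/2}'$. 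Here one subtracts the contribution of the flat part — which gives the linear operator — and controls the error terms coming from the nonlinearity by the flatness $\eps_k$, exactly as the error terms were controlled by the density property in the proof of Theorem \ref{E-L}. (3) \textbf{Linear regularity.} Invoke the $C^{1,\gamma}$ (in fact smooth) interior estimate for $L_s w = 0$ to get $|w(x') - w(0) - \nabla w(0)\cdot x'| \le C|x'|^{1+\gamma}$, hence $w$ is within $\theta^{1+\gamma}$ of an affine function on $B_\theta'$. (4) \textbf{Iteration.} Translate this back to $E_k$ for $k$ large: the rescaled graph is within $\eps_k \theta^{1+\gamma}$ (say $\le \theta \eps_k/2$ after choosing $\theta$) of a hyperplane with slope $\eps_k \nabla w(0)$, which is the tilted plane $e'$. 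Renormalize and repeat. A standard contradiction-compactness argument turns this into the quantitative improvement-of-flatness statement with a fixed $\eps_0$.

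The main obstacle I expect is \textbf{Step 2}: showing that the limit $w$ is a viscosity solution of the \emph{linear} fractional equation. The difficulty is twofold. First, the Euler--Lagrange equation of Theorem \ref{E-L} is stated only at points where $E$ has an interior tangent ball (a one-sided, viscosity-supersolution statement), so one must combine the super- and sub-solution versions and handle the tangency carefully under the rescaling, where the tangent balls become flatter and flatter (radii like $1/\eps_k \to \infty$). Second, when one subtracts the half-space contribution to isolate the linear operator, the error is an integral of $|x|^{-n-s}$ against the oscillation of $\partial E_k$ away from the flat plane; near the contact point this is $O(\eps_k)$ but the tail over $\mathbb{R}^n$ must also be shown to vanish, which again uses that $E_k$ stays in a thin strip \emph{globally} inside $B_1$ but only has prescribed behaviour far away up to the density estimates — so one needs a preliminary step confining $\partial E_k$ to a thin slab in all of $B_1$ (not just near the contact point), an auxiliary consequence one extracts from the flatness hypothesis plus minimality. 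Once these estimates are in place, the rest is the by-now-classical De Giorgi/Savin iteration machinery.
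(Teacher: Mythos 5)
Your overall strategy (contradiction--compactness, vertical dilation by the flatness, identification of a linearized fractional equation for the limit, iteration) is the same as the paper's, but there is a genuine gap in your Step 2, at precisely the point you flag and then dismiss: the tail of the nonlocal Euler--Lagrange operator. The hypothesis $\p E\cap B_1\subset\{|x\cdot e_n|\le\eps\}$ says nothing about $E$ outside $B_1$, so at a contact point $y\in\p E\cap B_{1/2}$ the contribution $\int_{\R^n\setminus B_1}(\chi_E-\chi_{\mathcal C E})|x-y|^{-n-s}\,dx$ is only $O(1)$; after dividing by the flatness $\eps_k$ it blows up, and your proposed remedy --- ``confining $\p E_k$ to a thin slab in all of $B_1$'' --- cannot help, since that confinement is already the hypothesis and the obstruction lives outside $B_1$. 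The same defect reappears in your Step 4: after one improvement you only know flatness $\theta\eps/2$ in $B_\theta$, while in $B_1\setminus B_\theta$ the surface is only $\eps$-flat, so the tail again enters at the wrong order relative to the new flatness and the iteration does not close.

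The paper's fix is to prove a stronger, multi-scale improvement of flatness (Theorem \ref{min_flat}): the inductive hypothesis requires $\p E\cap B_{2^l}$ to be $a2^{l\alpha}$-flat, relative to possibly tilting directions $e_l$, for \emph{all} dyadic scales up to the one where the flatness becomes of order one. This hypothesis is propagated by the iteration, is satisfiable from the single-scale hypothesis of Theorem \ref{flat} (at large scales it is vacuous), and yields the tail bound $Ca$ of (\ref{har}), which is what the Harnack-type lemma and the linearization need. A structural consequence is that the compactness limit is a \emph{global} function $u$ on $\R^{n-1}$ with growth $|u|\le C(1+|x|^{1+\alpha})$, $\alpha<s$, satisfying $\triangle^{(1+s)/2}u=0$ in all of $\R^{n-1}$; the regularity input is then the Liouville-type Proposition \ref{flat_1} (forcing $u$ to be linear), not interior estimates for a local solution in a half-ball as you propose. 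Note finally that the linearized operator has order $1+s$, not $s$ as your ``$(-\Delta)^{s/2}$-type'' suggests: integrating $\chi_E-\chi_{\mathcal C E}$ vertically across the thin slab raises the order by one, and it is exactly this order $1+s>1$, combined with the growth $|x|^{1+\alpha}$ with $\alpha<s$, that makes the Liouville theorem true.
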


As a consequence we obtain

\begin{cor}
If $\p E$ has at $x_0$ a tangent ball $B_r \subset E$, then $\p E$
is a $C^{1, \gamma}$ surface in a neighborhood of $x_0$.
\end{cor}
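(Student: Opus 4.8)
The plan is to reduce the statement, by scaling and compactness, to a classification of blow-ups, namely: a global minimizer of $\mathcal J$ that contains a half-space and has $0$ on its boundary must coincide with that half-space. Granting this, the tangent-ball hypothesis forces $\p E$ into an arbitrarily thin slab at all small scales, and Theorem~\ref{flat} upgrades that to $C^{1,\gamma}$ regularity. First I would translate $x_0$ to $0$, rotate, and dilate so that $E$ is minimal in a large ball $B_R$, that $0\in\p E$, and that $B_1(-e_n)\subset E$; this is legitimate because $L(\lambda A,\lambda B)=\lambda^{n-s}L(A,B)$, so being a minimizer of $\mathcal J$ is scale invariant. Letting $\eps_0=\eps_0(n,s)$ be the constant of Theorem~\ref{flat}, it then suffices to exhibit a small $\rho>0$ with $\p E\cap B_\rho\subset\{|x\cdot e_n|\le\eps_0\rho\}$: applying Theorem~\ref{flat} to the rescaled minimizer $\rho^{-1}E$ shows that $\p E\cap B_{\rho/2}$ is a $C^{1,\gamma}$ graph, which is the assertion.

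To obtain such a $\rho$ I would blow up. Fix any $\rho_k\downarrow 0$ and set $E_k:=\rho_k^{-1}E$; each $E_k$ is a minimizer of $\mathcal J$ in $B_{R/\rho_k}$, contains the ball $B_{1/\rho_k}(-\rho_k^{-1}e_n)$, and has $0\in\p E_k$. Comparing $E_k$ inside each fixed ball with the competitor obtained by replacing $E_k$ there by the half-space $H:=\{x\cdot e_n\le 0\}$ gives a uniform local energy, hence $H^{s/2}$, bound for $\chi_{E_k}$; by fractional Rellich and a diagonal argument, $E_k\to E_\infty$ in $L^1_{loc}(\mathbb R^n)$ along a subsequence, and by the compactness theorem (Theorem~\ref{cl_limit}, which holds with $B_1$ replaced by any ball) $E_\infty$ is a minimizer of $\mathcal J$ in every ball. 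Passing the uniform density estimates of Section~4 to the limit gives $0\in\p E_\infty$; since the expanding balls $B_{1/\rho_k}(-\rho_k^{-1}e_n)$ converge locally to $H$ we get $H\subset E_\infty$, and in particular $B_1(-e_n)\subset E_\infty$; and by Corollary~\ref{c4.1}(ii) the boundaries $\p E_k$ lie, for $k$ large, in any prescribed neighborhood of $\p E_\infty$.

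Now I claim $E_\infty=H$. Since $E_\infty$ is in particular a supersolution, $0\in\p E_\infty$, and $B_1(-e_n)\subset E_\infty$, Theorem~\ref{E-L} yields $\limsup_{\delta\to0}\int_{\mathbb R^n\setminus B_\delta}\frac{\chi_{E_\infty}-\chi_{\mathcal C E_\infty}}{|x|^{n+s}}\,dx\le0$. For the half-space, $\chi_H-\chi_{\mathcal C H}$ is odd in $x\cdot e_n$ while $|x|^{-n-s}$ is even, so $\int_{\mathbb R^n\setminus B_\delta}\frac{\chi_H-\chi_{\mathcal C H}}{|x|^{n+s}}\,dx=0$ for every $\delta>0$. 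Subtracting and using $(\chi_{E_\infty}-\chi_{\mathcal C E_\infty})-(\chi_H-\chi_{\mathcal C H})=2(\chi_{E_\infty}-\chi_H)\ge0$ (because $H\subset E_\infty$), the quantity $\int_{\mathbb R^n\setminus B_\delta}\frac{2(\chi_{E_\infty}-\chi_H)}{|x|^{n+s}}\,dx$ is nonnegative, nondecreasing as $\delta\downarrow0$, and has nonpositive limit; hence it vanishes, and positivity of the kernel forces $\chi_{E_\infty}=\chi_H$ a.e., i.e.\ $E_\infty=H$. Consequently $\p E_\infty=\{x\cdot e_n=0\}$, so by Corollary~\ref{c4.1}(ii) there is $k$ with $\p E_k\cap B_1\subset\{|x\cdot e_n|<\eps_0\}$, that is $\p E\cap B_{\rho_k}\subset\{|x\cdot e_n|<\eps_0\rho_k\}$; taking $\rho=\rho_k$ completes the proof.

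The main obstacle is the classification step, i.e.\ ruling out any bulge of the blow-up above the half-space; the argument above handles it cheaply via the one-sided Euler--Lagrange inequality of Theorem~\ref{E-L} together with the exact antisymmetry of the half-space, and this is precisely the place where the interior tangent ball enters (it is what makes Theorem~\ref{E-L} applicable to $E_\infty$). The remaining delicate technical point is the compactness in the blow-up: one must produce the $L^1_{loc}$-convergent subsequence of the rescalings $E_k$ with constants that do not deteriorate as $k\to\infty$, which requires a scale-invariant local energy bound; this follows from the half-space surgery as a competitor together with the density estimates, but it should be set up carefully.
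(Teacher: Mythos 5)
Your argument is correct. The paper states this corollary without proof ("as a consequence" of Theorem \ref{flat}), and your blow-up scheme is the intended one: rescale so the tangent ball opens up to a half-space $H$ contained in the limit $E_\infty$, classify $E_\infty=H$, deduce flatness of $\partial E$ at some small scale via Corollary \ref{c4.1}(ii), and invoke Theorem \ref{flat}. The one genuinely nontrivial step, the classification, you handle by combining the one-sided Euler--Lagrange inequality of Theorem \ref{E-L} with the exact cancellation $\int_{\R^n\setminus B_\delta}(\chi_H-\chi_{\mathcal C H})|x|^{-n-s}\,dx=0$ and the sign of $\chi_{E_\infty}-\chi_H$; this is the same symmetry-and-cancellation mechanism the paper itself uses in the Section 5 comparison with hyperplanes, and it correctly avoids the monotonicity formula, which is only established later in the paper.
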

The main steps follow those devised in \cite{S} to provide an alternative proof
to the de Giorgi theorem for classical minimal surfaces.
Because the case of nonlocal minimal surfaces contains difficulties on its own, it is
useful to recall how the method of \cite{S} works.
\subsection{Classical minimal surfaces}. Let us define the flatness of a cylinder to be the ratio between
its height and the diameter of the base. It is well-known that Theorem 
\ref{flat} reduces to proving an improvement of flatness theorem of the type
\begin{thm}{\label{improv0}}
Assume $E$ is minimal in $B_1$. There exists three reals: $\eta_0>0$, $\eps_0>0$,
$q_0\in(0,1)$ and an orthonormal basis $(\tilde e_i)_{1\leq i\leq n}$
such that 

$$\p E \cap B_1 \subset \{|x\cdot e_n|\le \eps\},\ \ \ \ \eps\leq\eps_0$$
then

$$\p E \cap B_{\eta_0} \subset \{|x\cdot \tilde e_n|\le q_0\eta_0\eps\}$$.
\end{thm}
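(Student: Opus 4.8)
The plan is to argue by compactness, following the viscosity-solution approach of \cite{S}. Assume the statement fails. Then there is a sequence of minimal sets $E_k$ in $B_1$ and scales $\eps_k \to 0$ such that $\partial E_k \cap B_1 \subset \{|x\cdot e_n| \le \eps_k\}$ but the conclusion fails at scale $\eta_0$ (for every fixed candidate triple and basis; in practice one fixes $\eta_0$ and $q_0$ at the end). Rescale vertically: let $E_k^* $ be the image of $E_k$ under the map that stretches the $e_n$ direction by $1/\eps_k$. The boundaries $\partial E_k^*$ then lie in a slab of \emph{bounded} height $\{|x\cdot e_n|\le 1\}$ inside (a slightly smaller ball), and after passing to a subsequence they converge, in the Hausdorff sense on compact subsets, to the graph $\{x_n = w(x')\}$ of a function $w$ defined on $B'_{1/2}\subset\mathbb R^{n-1}$, with $\mathrm{osc}\, w \le 1$. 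The first real task is to identify the limiting equation satisfied by $w$.

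The second step is to show that $w$ is harmonic (this is the analogue, for classical minimal surfaces, of the fact that the linearization of the minimal surface operator at a flat configuration is the Laplacian). I would do this in the viscosity sense: if a smooth function $\varphi$ touches $w$ from below at an interior point $x_0'$ and $\Delta'\varphi(x_0')>0$, then for large $k$ one can slide a vertically-rescaled copy of the graph of $\varphi$ to touch $\partial E_k^*$ from one side at a point near $x_0'$; unstretching vertically, this produces a smooth surface of strictly positive mean curvature touching $\partial E_k$ from the inside of $E_k$, contradicting the comparison principle for minimizers (minimal sets cannot be touched from inside by a strict supersolution of the minimal surface equation). The same argument from above handles superharmonicity, so $w$ is harmonic.

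The third step is the standard quantitative conclusion. Since $w$ is harmonic with $w(0')=0$ (the limit inherits $0\in\partial E$, suitably normalized) and $|w|\le 1$ on $B'_{1/2}$, interior estimates for harmonic functions give, for every $\eta\in(0,1/2)$, a linear function $\ell(x') = \xi\cdot x'$ with $|w(x') - \ell(x')| \le C\eta^2$ on $B'_\eta$, where $C$ is universal. Choose $\eta_0$ small so that $C\eta_0 \le q_0/2$ for some fixed $q_0\in(0,1)$; then on $B'_{\eta_0}$ the graph of $w$ lies in a slab of half-width $q_0\eta_0/2$ around the tilted hyperplane $\{x_n = \ell(x')\}$. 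Transferring this back through the rescaling and Hausdorff convergence: for $k$ large, $\partial E_k \cap B_{\eta_0}$ is contained in a slab of half-width $q_0\eta_0\eps_k$ about the hyperplane orthogonal to the tilted direction $\tilde e_n$ obtained from $\ell$ — which contradicts the assumed failure.

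The main obstacle is the second step — establishing that the limit is harmonic — because it requires that the minimality of $E_k$ be usable as a one-sided comparison (viscosity) condition, and that the vertical rescaling, which destroys minimality, nevertheless preserves the relevant barrier/comparison behavior in the limit. One must be careful that the flat pieces of $\partial E_k$ that are touched really do carry the geometric supersolution property, and that there is no loss of the surface "escaping" vertically out of the slab (this is exactly where the uniform density estimates of Section 4 and the compactness in Corollary \ref{c4.1}(ii) are needed, to guarantee genuine Hausdorff convergence of $\partial E_k$ rather than mere $L^1_{loc}$ convergence of the sets). Everything else is either soft compactness or classical harmonic-function estimates.
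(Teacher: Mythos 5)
Your overall strategy --- compactness, vertical dilation, identification of a harmonic limit, and transfer back of the interior harmonic estimate --- is the same as the paper's. But there is a genuine gap at the first substantive step: you assert that, after the vertical stretching by $1/\eps_k$, the boundaries $\partial E_k^*$ converge on compact subsets to the graph of a single-valued continuous function $w$. Nothing you invoke justifies this. The uniform density estimates and Corollary \ref{c4.1}(ii) give Hausdorff closeness of the \emph{unscaled} boundaries $\partial E_k$, which only says that $\partial E_k^*$ stays in the slab $\{|x\cdot e_n|\le 1\}$; since the dilation is degenerate ($1/\eps_k\to\infty$), a vertical oscillation of $\partial E_k$ of size comparable to $\eps_k$ occurring over an arbitrarily small horizontal scale survives the rescaling, so a priori the limit of $\partial E_k^*$ could contain vertical segments or be multivalued. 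What rules this out is precisely the Harnack-type inequality (Theorem \ref{Harnack0}, quoted from \cite{S}), which is the paper's declared main tool: the height of the cylinder containing $\partial E_m$ decreases by a fixed factor $(1-\nu)$ each time the horizontal scale is halved, for as long as the flatness hypothesis persists, i.e.\ for roughly $\log(\eps_0/\eps_m)$ iterations. This geometric decay of the oscillation at all intermediate scales is exactly the equicontinuity (indeed the H\"older modulus, with exponent $\alpha=|\log(1-\nu)|/\log 2$) that Arzel\`a--Ascoli requires; it cannot be obtained from soft compactness or from the density estimates alone, and your proposal omits it entirely.

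The remaining steps are sound, with one deviation from the paper worth noting. To linearize, you touch $\partial E_k$ with strictly mean-convex barriers built from quadratic graphs; the paper instead invokes the result of \cite{CC} that the signed distance function $d_m$ to $\partial E_m$ is harmonic in the viscosity sense on $\partial E_m$ and passes to the limit through the expansion $d_m=\eps_m v(x')-x_n+o(\eps_m)$. For classical minimizers the two formulations are essentially equivalent; yours is more geometric, while the distance-function/viscosity formulation is the one that the paper then adapts to the nonlocal Euler--Lagrange equation. Your final step (interior estimates for the harmonic limit, choice of $\eta_0$ and $q_0$, and the contradiction with the assumed failure) matches the paper's.
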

In other words, $\p E \cap B_{\eta_0}$ can be included in a cylinder of flatness 
$q_0\eps$. The iteration of this theorem produces the $C^{1,\gamma}$ regularity in a neighborhood of 0.

The main tool is a Harnack type inequality
\begin{thm}{\label{Harnack0} \cite{S}} Assume $E$ minimal in $B_1$, and $0\in\partial E$. There is $\eps_0>0$ and $\nu\in(0,1)$ such that,
if 

$$\p E \cap B_1 \subset \{|x\cdot e_n|\le \eps\},\ \ \ \ \eps\leq\eps_0$$
then

$$\p E \cap \{\vert x'\vert \leq\displaystyle\frac12\} \subset \{|x\cdot \tilde e_n|\le (1-\nu)\eps\}$$.
\end{thm}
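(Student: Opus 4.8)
\textit{Proof strategy.}
The plan is to follow the scheme of \cite{S} for classical minimal surfaces (and, when the paper passes to its nonlocal analogue, to replace the maximum principle by the viscosity Euler--Lagrange inequality of Theorem \ref{E-L}, absorbing the nonlocal error terms by the density estimates exactly as in Lemma \ref{e-l_lemma}). This statement is the key brick of the improvement-of-flatness scheme: iterating it, and tracking the slowly varying direction $\tilde e_n$ whose successive increments $O(\eps_k)$ are summable, yields the $C^{1,\gamma}$ regularity. Two preliminary reductions. First, by symmetry it suffices to rule out that $\p E$ reaches $\{x\cdot e_n>(1-\nu)\eps\}$ inside $\{|x'|\le 1/2\}$; the unit vector $\tilde e_n$ in the conclusion absorbs the $O(\eps)$ tilt produced by the construction. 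Second, I would fix the orientation of the interface: since $\p E\cap B_1\subset\{|x_n|\le\eps\}$, each of the caps $\{x_n>\eps\}\cap B_{3/4}$ and $\{x_n<-\eps\}\cap B_{3/4}$ is connected and disjoint from $\p E$, hence lies entirely in $E$ or in $\mathcal C E$; if both lay in, say, $E$, then $\mathcal C E\cap B_{1/2}$ would be contained in the slab $\{|x_n|\le\eps\}$ and have measure $O(\eps)$, contradicting the positive-density bound of Theorem \ref{un_de} at $0\in\p E$ once $\eps_0$ is small. So, after a reflection, $E$ ``lies below'' the interface over $B_{3/4}$.

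The heart is a barrier argument. One uses spheres of radius $R$ a large universal multiple of $1/\eps$, so that such a sphere rises only $O(1/R)=o(\eps)$ over $B_1$ --- it is much flatter than the slab --- while its curvature (classically the mean curvature $(n-1)/R$, in the nonlocal case the fractional curvature $O(R^{-s})$ obtained by scaling from the value for the unit ball) is a negligible error. Assuming a ``high'' point $P$ of $\p E$ exists, I would place a copy of such a sphere, bounding a ball contained in $\mathcal C E$, high above the slab and slide it down until its cap first touches $\p E$ over a slightly larger ball, at a point $p^\star$ with $p^{\star\prime}$ close to $P'$ and $p^\star_n\ge(1-\nu)\eps-O(1/R)$; the interface then lies below the cap, an almost-hyperplane at height $\ge p^\star_n-O(1/R)$. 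Comparison of $E$ with the set obtained by filling $E$ up to a suitably lowered copy of this cap shows, by minimality, that the interface cannot ``dip'' far below the cap: the portion of $E$ lying below the lowered cap is controlled (classically in perimeter; in the nonlocal case one estimates the change of $\mathcal J$, the genuinely nonlocal contribution of the sphere being absorbed as in Lemma \ref{e-l_lemma} with the help of the clean-ball condition, Corollary \ref{cl_ba}), and a De~Giorgi iteration in the spirit of the proof of Lemma \ref{l2.1} upgrades this to a \emph{pointwise} trapping of $\p E$ just below the cap over a fixed ball $B_\rho(p^{\star\prime})$. Thus $\p E$ stays within $o(\eps)$ of the hyperplane $\{x_n=p^\star_n\}$ there; chaining this over a finite, universal number of overlapping balls covering $\{|x'|\le 3/4\}$ --- at each step the contact point is still a ``high'' point, the accumulated error being $o(\eps)$ --- gives $\p E\cap\{|x'|\le 3/4\}\subset\{x_n\ge (1-\nu)\eps-o(\eps)\}$. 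But then $\{x_n<(1-\nu)\eps-o(\eps)\}\cap\{|x'|\le 3/4\}$ is connected, misses $\p E$ and meets $E$ near $(0',-\eps)$, hence lies in $E$, so the origin is an interior point of $E$, contradicting $0\in\p E$ once $\eps_0$ is small. This rules out high points, and the symmetric argument rules out low points, which is the assertion.

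I expect the main obstacle to be the quantitative ``no-dip'' step: one must turn the soft fact that the interface is trapped \emph{below} the sliding cap into a \emph{pointwise} bound on how far below, valid over a whole ball and not merely near the contact point. Classically this is the interplay between the measure (or area) excess and the De~Giorgi iteration of \cite{S}; in the nonlocal setting --- the paper's real target --- the extra difficulty is that the competitor used in the comparison changes $\mathcal J$ by genuinely nonlocal terms, and one must show, precisely as in Lemma \ref{e-l_lemma}, that the gain coming from the positive density of $E$ and $\mathcal C E$ dominates both the sphere's curvature error $R^{-s}$ and the far-field contribution of the kernel. This balance dictates the choice of $R$ and is where the density estimates of Theorem \ref{un_de} are indispensable.
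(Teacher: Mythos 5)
First, a point of orientation: the paper does not prove Theorem \ref{Harnack0} at all --- it is quoted from \cite{S} as part of the expository recap of the classical de Giorgi scheme. The honest object of comparison is the paper's proof of its nonlocal analogue, the unnumbered lemma following Theorem \ref{min_flat}, whose mechanism is the one your proposal should reproduce.

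That mechanism is a \emph{measure dichotomy}, and this is where your argument has a genuine gap. The paper assumes from the outset that one of $E$, $\mathcal C E$ occupies more than half of a small reference cylinder $D=\{|x'|\le\delta\}\times\{|x_n|\le a\}$, slides a paraboloid of opening $a$ until it touches $\p E$ at $y$, and then the Euler--Lagrange inequality at $y$ is violated because the measure term $I_2\ge C a\,\delta^{-1-s}$ (coming from the quarter of $D$ guaranteed to lie in $E\setminus P$) beats the barrier's curvature term $I_1\ge -Ca$ once $\delta$ is small. The measure information is the engine: it is the only source of a positive term in the viscosity inequality. Your proposal inverts the logic --- one high point, one sliding barrier, one touching point, and then a claimed pointwise trapping of $\p E$ just below the cap over a whole ball, followed by chaining. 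But the viscosity inequality at a single touching point says nothing about how far $\p E$ dips below the cap away from that point; turning "touched from above at $p^\star$" into "uniformly close to the cap on $B_\rho(p^{\star\prime})$" \emph{is} the Harnack inequality, so the step you yourself flag as "the main obstacle" is the whole theorem. The De Giorgi iteration of Lemma \ref{l2.1} cannot rescue this: it upgrades \emph{small} measure to zero measure, and you have supplied no source for the initial smallness of the "dip" set --- in the paper that smallness is exactly the assumed measure dichotomy.

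Two further quantitative problems in the nonlocal setting, which is the paper's real target. A sphere of radius $R\sim 1/\eps$ has nonlocal curvature of order $R^{-s}\sim\eps^{s}\gg\eps$ (since $s<1$), so it is \emph{not} a negligible error at the height scale $\eps$; the paper's barrier is a paraboloid of opening $a$, whose contribution is $O(a)$, i.e.\ exactly at scale, and it is dominated only thanks to the factor $\delta^{-1-s}$ produced by the measure term. Second, the far-field part of the kernel is controlled only via the multi-scale flatness hypothesis on all dyadic balls $B_{2^l}$, $l\le k$ (estimate (\ref{har})); a single-scale flatness hypothesis as in the statement of Theorem \ref{Harnack0} does not suffice nonlocally, which is precisely why the paper's lemma carries that extra hypothesis. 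Your preliminary orientation step (both caps cannot lie in $E$, by the density estimate at $0$) is correct, but the heart of the proof is missing.
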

Here we have denoted by $x=(x',x_n)$ the generic point of $\R^n$.

Assume now the existence of a sequence of minimal sets $E_m$ and a sequence 
$\eps_m$ going to 0 such that $0\in\p E_m$ and

$$\p E \cap B_1 \subset \{|x\cdot e_n|\le \eps_m\}$$
and none of the sets $E_m$ satisfies the conclusion of Theorem \ref{improv0}. Iterate Theorem \ref{Harnack0}, at the $k^{th}$ iteration
$\partial E_m$ is in a cylinder whose base has diameter $2^{-k}$ and height $(1-\nu)^k\eps_m$. The assumptions of Theorem \ref{Harnack0}
cease to be verified when $2^{k}(1-\nu)^k\eps_m$ becomes of the order of $\eps_0$, hence

$$ k\sim \frac1{\log 2(1-\nu)}\log\frac{\eps_0}{\eps_m}.$$

Consider the vertical dilations of $E_m$:

$$ E_m^*=\{(x',\frac{x_n}{\eps_m}),\ \ (x',x_n)\in \p E_m.$$
As a consequence of the above considerations, the intersection of $\p E_m^*$ with the vertical line $\{x'=0\}$ converges to $\{0\}$.
The same operation may be done for any other point $(x',x_n)\in \p E_m$, provided that $\p E_m$ has been suitably translated. The end result is 
that, in $B_{1/2}$, the sequence $\partial E_m^*$ converges to the graph of a function $\{(x',v(x')),\ \vert x'\vert\leq\displaystyle\frac12\}$.
Moreover, $v$ is H\"older: indeed, the first $k$ for which $x_1'$ and $x_2'$ cease to be in the same cylinder $\{\vert x'\vert\leq 2^{-k}\}$ is 
$k\sim\displaystyle\frac{\log\vert x_1'-x_2'\vert}{\log 2}$, and the oscillation of $v$ in the corresponding cylinder -normalized by $\eps_m$ -
 is $(1-\nu)^k$. Hence the oscillation of $v$ is of order $\vert x_1'-x_2'\vert^\alpha$, 
 
 $$ \alpha=\frac{\vert \log(1-\nu)\vert} {\log 2}.$$

On the other hand, the (signed) distance function to $\partial E_m$, denoted by $d_m(x)$ (with the 
convention that $d_m<0$ if $x\in E_m$ can be computed as

$$ d_m(x)= \eps_mv(x')-x_n+o(\eps_m),$$
moreover it satisfies  

\begin{thm} {\label{visc0}\cite{CC}}. $d_m$ is harmonic, in the viscosity sense, on $\partial E_m$.
\end{thm}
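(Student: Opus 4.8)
The plan is to reduce the statement to a single clean fact: $d_m$ can only be touched, from either side, at \emph{regular} points of $\partial E_m$, and there it is classically harmonic. More precisely, it suffices to prove that $d_m$ is a viscosity supersolution of $\Delta u=0$ at every point of $\partial E_m$ (every $C^2$ function touching $d_m$ from below at such a point has nonpositive Laplacian there); the subsolution property, and hence harmonicity, follow by applying the same statement to $\mathcal CE_m$, which is again a minimizer and whose signed distance is $-d_m$. So let $\varphi\in C^2$ touch $d_m$ from below at $x_0\in\partial E_m$, i.e. $\varphi\le d_m$ near $x_0$ and $\varphi(x_0)=d_m(x_0)=0$; the goal is $\Delta\varphi(x_0)\le 0$.

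First I would show $\nabla\varphi(x_0)\ne 0$. By the uniform density estimate (the clean ball condition) there are interior points $z\in E_m$ arbitrarily close to $x_0$ with $d_m(z)\le -c\,|z-x_0|$ for a fixed $c>0$; if $\nabla\varphi(x_0)=0$, then $\varphi(z)=O(|z-x_0|^2)$, and $\varphi(z)\le d_m(z)\le -c|z-x_0|$ forces $|z-x_0|$ to be bounded below, which is impossible. Next I would show $x_0$ is a regular point of $\partial E_m$ and $|\nabla\varphi(x_0)|=1$. Blow $E_m$ up at $x_0$: by the monotonicity formula and the compactness of minimizers (Theorem~\ref{cl_limit} and Corollary~\ref{c4.1}), along a subsequence $r^{-1}(E_m-x_0)\to E_\infty$, a minimizing cone, with the boundaries converging; hence $r^{-1}d_m(x_0+r\,\cdot)$ converges to the signed distance $d_{E_\infty}$. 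Rescaling $\varphi\le d_m$ by $r^{-1}$ and letting $r\to0$ gives $\langle\nabla\varphi(x_0),y\rangle\le d_{E_\infty}(y)$ for all $y$; evaluating on the nonempty open cone $E_\infty^\circ$, where $d_{E_\infty}<0$, yields $E_\infty^\circ\subseteq\{\langle\nabla\varphi(x_0),\cdot\rangle<0\}$, so $\partial E_\infty$ lies in a closed half-space and touches the bounding hyperplane at the origin. The strong maximum principle for stationary minimal hypersurfaces then forces $\partial E_\infty$ to equal that hyperplane, i.e. the blow-up is flat, so by the classical $\varepsilon$-regularity theorem $x_0$ is a regular point; consequently $d_m$ is smooth near $x_0$ and $\nabla\varphi(x_0)=\nabla d_m(x_0)$ is a unit vector.

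Once $x_0$ is known to be regular the conclusion is immediate: the signed distance to a smooth hypersurface satisfies $\Delta d_m=H$ along the hypersurface ($H$ the mean curvature measured in the $\nabla d_m$ direction), and $H\equiv 0$ since $\partial E_m$ is minimal; as $\varphi-d_m$ is a $C^2$ function with a local maximum at $x_0$, $\Delta\varphi(x_0)\le\Delta d_m(x_0)=0$. This proves the supersolution property, and with it (by the symmetry above) the full statement. In particular, at a point of the singular set of $\partial E_m$ the first two steps show that no $C^2$ function touches $d_m$ from below — nor, by symmetry, from above — so there is nothing to verify there.

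The step I expect to be the real content is the blow-up argument showing that a one-sided smooth test function at $x_0$ forces the tangent cone to be flat: it uses the existence of tangent cones (the monotonicity formula), the compactness and Hausdorff convergence of minimizers, the strong maximum principle for minimal hypersurfaces, and the $\varepsilon$-regularity (flatness $\Rightarrow$ smoothness). If one prefers to avoid invoking the regularity theory, an alternative route is to prove directly that the neighborhoods $N_t=\{d_m<t\}$, being unions of translates of the minimizer $E_m$, are viscosity subsolutions of the minimal surface equation, and then to convert the resulting one-sided curvature inequality for $\partial N_t$ into $\Delta\varphi(x_0)\le 0$ using $|\nabla\varphi(x_0)|\le 1$ and the fact that $d_m$ grows exactly linearly along rays normal to its level sets; the delicate point in that approach is making the underlying ``slide the barrier inward and strictly decrease perimeter'' comparison rigorous and uniform in the sliding parameter.
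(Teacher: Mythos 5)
The paper does not prove this statement: it is quoted from the reference \cite{CC}, where it is established by an elementary comparison argument — one slides smooth barriers (or translates of $E_m$ itself) against the minimizer and reads off the sign of the mean curvature of the level sets of $d_m$ directly from minimality and the density estimates, with no regularity theory whatsoever. That is by design: Theorem \ref{visc0} is the engine that drives the improvement of flatness (Theorem \ref{improv0}) and hence De Giorgi's $\eps$-regularity theorem. Your main route inverts this logic. To show that a one-sided $C^2$ test function forces $x_0$ to be a regular point, you invoke ``the classical $\varepsilon$-regularity theorem (flatness $\Rightarrow$ smoothness)'', and then you use classical smoothness of $\p E_m$ near $x_0$ to compute $\Delta d_m = -H = 0$. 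But flatness-implies-smoothness is precisely the theorem being proved by the scheme in which Theorem \ref{visc0} sits, so within the development of this paper (and of \cite{CC}) your argument is circular. Note that the paper's own nonlocal counterpart, Theorem \ref{E-L}, is proved exactly in the spirit of \cite{CC}: a direct perturbation argument from the variational inequalities (\ref{sup})--(\ref{sub}) plus the density estimates, carried out before any regularity is available.

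Two further points. First, even taken as a standalone proof that is allowed to use the full classical theory, the step ``$\p E_\infty$ lies in a closed half-space and touches the bounding hyperplane at the origin, hence by the strong maximum principle it equals that hyperplane'' is not a routine application of the strong maximum principle: the contact point is the vertex of a possibly singular minimizing cone, and the rigidity statement ``a minimizing cone contained in a half-space with $0\in\p C$ on the bounding hyperplane is a half-space'' needs its own justification (Ilmanen-type maximum principle for stationary varifolds, or a dimension-reduction/foliation argument); as written it is asserted, not proved. Second, the ``alternative route'' you sketch at the end — showing the neighborhoods $N_t=\{d_m<t\}$ are viscosity subsolutions by a sliding comparison and converting the one-sided curvature bound into $\Delta\varphi(x_0)\le 0$ — is the correct, non-circular argument and is essentially what \cite{CC} does; but you explicitly flag its delicate point without resolving it, so as it stands the proposal contains no complete, non-circular proof. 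Your preliminary observations (the reduction to the supersolution property via $\mathcal C E_m$, and $\nabla\varphi(x_0)\neq0$ from the clean ball condition) are correct and would survive in the honest argument.
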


This means that, if we touch $\p E_m$ from above or below by quadratic graphs, the corresponding inequalities hold.
An easy limiting procedure yields

$$ -\Delta_{x'}v=0\ \ \ \hbox{in $\{\vert x'\vert\leq\displaystyle\frac12\}$},$$

in the viscosity sense. This implies in turn that $v$ is harmonic in the classical sense in
$\{\vert x'\vert\leq\displaystyle\frac12\}$, hence smooth. In particular, its graph can be included in 
a cylinder of arbitrary flatness $\mu$ around 0. However, recall that the sequence of dilations $E_m^*$ converges
to $(x',v(x'))$, hence can be included in a cylinder of flatness, say, $2\mu$ around 0 for $n$ large enough.
This is a contradiction, and Theorem \ref{improv0} is proved.

\subsection{The proof of Theorem \ref{flat}: linear equations}. We are going to follow the same strategy
as above: consider a sequence of thinner and thinner nonlocal minimal sets, prove that their dilations converge to some
H\"older graph with the aid of a - yet to prove - Harnack inequality, and finally translate a viscosity relation  - here,
Theorem \ref{E-L} - into a linear viscosity relation in one less dimensions, in order to prove further regularity for the 
limiting graph $v$. In this subsection, we prove a preliminary result
for global solutions to the linear equation $\triangle ^\sigma
u=0,$ $0< \sigma <1$.

If $u$ is a function such that
\begin{equation}{\label{growth}}
\int\frac{|u|}{(1+|x|^2)^{\frac{n+2 \sigma}{2}}}< \infty,
\end{equation}
$\triangle^\sigma u$ is defined as
$$\triangle
^\sigma u(y)=\int_{\R^n}\frac{u(x)-u(y)}{|x-y|^{n+2\sigma}}dx.$$

The integral above is convergent in the principal value sense if
there exists a smooth tangent function that touches $u$ by above
(or below) at $y$.

We recall the notion of viscosity solutions (see \cite{CSi2}).

\begin{defn}
The continuous function $u$ satisfies
$$\triangle ^\sigma u \le
f \quad \mbox{in $B_1$}$$ in the viscosity sense ($u$ is a
supersolution) if the inequality holds at all points $y \in B_1$
where $u$ admits a smooth tangent function by below.

Similarly, one can define the notion of subsolution. If $u$ is
both a supersolution and a subsolution we say that $u$ is a
viscosity solution.
\end{defn}

In \cite{CSi2} it was proved that if
$$\triangle ^\sigma u=f \quad \mbox{in $B_1$}$$
in the viscosity sense then

\begin{equation}{\label{l_est}}
\|u\|_{C^{2, \gamma}(B_{1/2})} \le
C(\|f\|_{C^{1,1}(B_1)}+\|u\|_{L^\infty(\R^n)}),
\end{equation}
for $\gamma$, $C$ depending only on $n$ and $\sigma$.

If $\sigma>1/2$ then (\ref{growth}) is satisfied for functions
$u(x)$ that grow at infinity at most like $|x|^{1+\alpha}$,
$0<\alpha<2\sigma-1.$

\begin{prop}{\label{flat_1}}
Let $\sigma>1/2$, and assume $$|u(x)| \le 1+|x|^{1+\alpha}, \quad
\quad 0<\alpha<2\sigma-1,$$ and
$$\triangle^\sigma u =0 \quad \text{in $\R^n$}$$
in the viscosity sense. Then $u$ is linear.
\end{prop}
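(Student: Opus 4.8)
The plan is to run a Liouville-type argument based on the interior $C^{2,\gamma}$ estimate \eqref{l_est} together with a rescaling that exploits the slow growth hypothesis $|u(x)|\le 1+|x|^{1+\alpha}$ with $\alpha<2\sigma-1$. First I would fix a point, say the origin, and for each $R>1$ consider the rescaled function $u_R(x):=u(Rx)/R^{1+\alpha}$. Since $\triangle^\sigma$ is homogeneous of degree $2\sigma$, $u_R$ again satisfies $\triangle^\sigma u_R=0$ in $\R^n$ in the viscosity sense (this uses $2\sigma>1+\alpha$, so $\triangle^\sigma u_R = R^{2\sigma-1-\alpha}(\triangle^\sigma u)(R\,\cdot)=0$, and also keeps the growth bound scale-invariant: $|u_R(x)|\le R^{-1-\alpha}+|x|^{1+\alpha}\le 1+|x|^{1+\alpha}$ for $R\ge1$). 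Applying \eqref{l_est} on $B_1$ — where the $L^\infty$ norm is not available globally, so one must first localize — gives a bound on $\|u_R\|_{C^{2,\gamma}(B_{1/2})}$ in terms of $\|u_R\|_{L^\infty(B_1)}$, hence a bound on the second derivatives $|D^2 u_R(0)|$ by a constant depending only on the growth. Translating back, $|D^2 u(0)|\le C R^{-(1-\alpha)}$ (the exponent being $1+\alpha-2$ from the rescaling of $D^2$ combined with the $R^{1+\alpha}$ normalization), and letting $R\to\infty$ forces $D^2u(0)=0$. Since the origin was arbitrary, $D^2u\equiv 0$, so $u$ is an affine function; the growth bound with exponent $1+\alpha<2$ then rules out nothing further, but affine plus the precise growth already gives $u$ linear (up to an additive constant, which is what "linear" means here in the de Giorgi normalization where $0\in\partial E$ forces $u(0)=0$ in the application).

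The subtlety I expect to fight with is that the estimate \eqref{l_est} as quoted requires $\|u\|_{L^\infty(\R^n)}$, which is infinite for a function growing like $|x|^{1+\alpha}$. The fix is the standard one for nonlocal equations: split $u = u\,\chi_{B_{2R}} + u\,\chi_{\R^n\setminus B_{2R}}$ and observe that the far-field piece contributes, at points of $B_R$, a bounded and smooth right-hand side, because
$$\left|\int_{\R^n\setminus B_{2R}}\frac{u(x)}{|x-y|^{n+2\sigma}}\,dx\right|\le C\int_{2R}^\infty \frac{1+\rho^{1+\alpha}}{\rho^{1+2\sigma}}\,d\rho \le C R^{1+\alpha-2\sigma}\qquad\text{for }y\in B_R,$$
which is exactly the decaying factor we need. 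So one works with $\tilde u := u\,\chi_{B_{2R}}$ (or rather a cutoff version), which solves $\triangle^\sigma \tilde u = f$ in $B_R$ with $f$ controlled in $C^{1,1}(B_R)$ after rescaling, and $\|\tilde u\|_{L^\infty(\R^n)}\le C R^{1+\alpha}$. Feeding this into \eqref{l_est} on $B_{R/2}$ (after rescaling $B_R\to B_1$) yields $R^2|D^2u(y)|\le C(R^{1+\alpha-2\sigma}\cdot R^{2\sigma} + R^{1+\alpha})=CR^{1+\alpha}$ for $y\in B_{R/4}$, i.e. $|D^2 u(y)|\le C R^{\alpha-1}\to 0$. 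The main obstacle is thus purely bookkeeping: keeping track of the scaling exponents so that the hypothesis $\alpha<2\sigma-1$ is used in exactly the right place (it is needed both to make $u_R$ a solution and to make the tail term decay), and checking that the viscosity formulation is stable under the cutoff — i.e. that touching $\tilde u$ from above/below at an interior point of $B_R$ is the same as touching $u$, since they agree on $B_{2R}$ and the equation's nonlocal part only sees the global function, which is why we put the difference into $f$ rather than changing the notion of solution.

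One alternative, if one prefers to avoid cutoffs, is to use incremental quotients directly: for a fixed unit vector $e$ and increment $h$, the second difference $\delta_h^2 u(x):=u(x+he)+u(x-he)-2u(x)$ satisfies $\triangle^\sigma(\delta_h^2 u)=0$ and, by the growth hypothesis and convexity-type interpolation, $|\delta_h^2 u(x)|\le C h^2(1+|x|^{\alpha-1})\cdot(\text{something bounded})$ — though making this precise is essentially the same computation. I would go with the cutoff-plus-rescaling route because it quotes \eqref{l_est} as a black box with minimal fuss. In the write-up I would present it as: (1) the tail estimate lemma giving $f\in C^{1,1}$ with the stated decay; (2) rescaling to $B_1$ and applying \eqref{l_est}; (3) sending $R\to\infty$ to kill $D^2 u$; (4) concluding $u$ affine, hence linear after the normalization.
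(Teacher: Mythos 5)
Your proposal is correct and follows essentially the same route as the paper: cut off the far field so that its contribution becomes a bounded $C^{1,1}$ right-hand side (the paper cuts at $B_2$ after rescaling, you cut at $B_{2R}$ before — the same operation), apply the interior estimate \eqref{l_est} to the rescaled functions $u_R=u(R\cdot)/R^{1+\alpha}$, and let $R\to\infty$ to kill the second-order information; the paper phrases the conclusion as $\nabla u(Rx)=\nabla u(0)$ via the $C^{1,1}$ bound rather than $D^2u\equiv 0$, but this is the same argument.
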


\begin{proof}
The function $$v(x):=u(x) \chi_{B_2}(x)$$ satisfies
$$ \triangle ^\sigma v(x)=f(x) \quad \quad \mbox{in $B_1$}$$
with
$$\|f\|_{C^{1,1}(B_1)}, \quad \|v\|_{L^\infty} \le C(\alpha).$$
From (\ref{l_est}) we obtain
$$\|u\|_{C^{1,1}(B_{1/2})} \leq C(\alpha).$$

This estimate holds also for the rescaled functions

$$u_R(x):=\frac{u(Rx)}{R^{1+\alpha}}, \quad \quad R\ge 1,$$
since they satisfy the same hypotheses as $u$. This gives

$$ |\nabla u(Rx)-\nabla u(0)| \le C(\alpha)R^{\alpha-1}|Rx| \quad \quad \mbox{if $|x|\le 1/2$}$$
which implies that $\nabla u(x_0)=\nabla u(0)$ for any $x_0\in
\R^n$.

\end{proof}

\

\textit{Remark.} The proposition is valid also for $0<\sigma \leq
1/2$ except that we have to replace $\triangle^\sigma u=0$ with
$``\nabla \triangle^\sigma u =0"$ that is $$ \triangle^\sigma u(y)
- \triangle^\sigma u(z) \leq 0, \quad y,z \in \R^n$$ whenever we
can touch $u$ by below at $y$ and by above at $z$ with smooth
functions.

\subsection{Improvement of flatness and Proof of Theorem \ref{flat}.}
 
The proof of the Harnack inequality goes by contradiction:
if our minimal set cannot be included in cylinder of a lesser height 
as we approch 0, we contradict the viscosity relation. However, this relation
is nonlocal, in contrast with what happens for classical minimal surfaces.
A possible remedy to this is to work at an intermediate scale:
$\vert x'\vert\sim a_m^{-1}$, $x_m\sim\eps_m a_m$, where 
$a_m\to+\infty$ and $\eps_m a_m\to 0$. However, we would in the end obtain a graph
$(x',v(x'))$ with no control on $v$. This is not desirable, since
we wish to prove that $v$ is $\displaystyle\frac{1+s}2$-harmonic. But then we need
a control on $v$ at infinity.

This is why we have to prove a special type of
 improvement of flatness for non-local
minimal surfaces. Here it is below, and Theorem \ref{flat} follows easily.

\begin{thm}{\label{min_flat}}
Assume $\partial E$ is minimal in $B_1$ for $H^{s/2}$, $s<1,$ and
fix $0<\alpha<s$. There exists  $k_0$ depending on $s$, $n$ and
$\alpha$ such that if  $0 \in \p E$ and
$$\partial E \cap B_{2^{-k}} \subset \{|x\cdot e_k| \leq
2^{-k(\alpha+1)}, |e_k|=1 \}, \quad \text{for $k=0,1,2,\ldots, k_0
$}$$ then there exist vectors $e_k$ for all $k \in \mathbb{N}$ for
which the inclusion above remains valid.
\end{thm}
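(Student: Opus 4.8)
\textbf{Proof proposal for Theorem \ref{min_flat}.}

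The plan is to run a contradiction/compactness argument in the spirit of de Giorgi--Savin, but with the crucial twist that we must improve flatness \emph{at every dyadic scale simultaneously} rather than just rescale once, precisely so that the limiting graph inherits a growth bound at infinity. So suppose the statement fails: there is a sequence $E_m$ of minimizers, with $0\in\p E_m$, satisfying the hypothesis for $k=0,\dots,k_0$ with $k_0=k_0(m)\to\infty$, but for which no choice of $e_k$ for $k>k_0(m)$ keeps the inclusion valid. Let $\bar k=\bar k(m)$ be the first scale at which the inclusion breaks. First I would record the basic geometric consequence of the Harnack-type inequality of \cite{S} adapted to this setting: passing from scale $2^{-k}$ to scale $2^{-(k+1)}$, if $\p E_m\cap B_{2^{-k}}$ sits in a cylinder of height $h$ over a base of radius $2^{-k}$, then at the next scale it sits in a cylinder of height at most $(1-\nu)h + C 2^{-k(\alpha+1)}$ (the second term being the ``error'' allowed by the hypothesis on the $e_k$'s at the controlled scales). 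Because $\alpha>0$, the controlled-scale errors are summable, and so one gets a clean geometric decay of the flatness for all $k\le \bar k(m)$, with the flatness at scale $2^{-k}$ comparable to $2^{-k(\alpha+1)}$ up to a universal constant and a slowly-varying factor.

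Next I would perform the vertical dilation. Set $E_m^*$ to be $E_m$ rescaled at scale $2^{-\bar k(m)}$ in the base variables and by the reciprocal of the flatness in the $x_n$ variable, as in the classical sketch in the excerpt. The scale-by-scale flatness decay established above translates, after dilation, into a \emph{uniform} bound: the rescaled boundaries $\p E_m^*$ lie in the region $\{|x_n|\le C(1+|x'|^{1+\alpha})\}$ on larger and larger balls. Together with the uniform density estimates (Theorem \ref{un_de}) and the improvement-of-Theorem-\ref{cl_limit} (Corollary \ref{c4.1}(ii)), this gives compactness: along a subsequence $\p E_m^*$ converges locally uniformly (in Hausdorff distance) to the graph $\{(x',v(x')):x'\in\R^{n-1}\}$ of a function $v$ satisfying $|v(x')|\le C(1+|x'|^{1+\alpha})$. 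The Hölder continuity of $v$, and in fact its $C^{1,\gamma}$ character away from the contradiction, comes from the same iterated Harnack estimate, exactly as in the classical case recalled in the text. The point of the simultaneous improvement at all scales was precisely to keep this sublinear-plus bound on $v$ at infinity.

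Now I would pass the Euler--Lagrange relation to the limit. By Theorem \ref{E-L} (and its viscosity reformulation together with the comparison lemma for $\delta$-neighborhoods), the signed distance to $\p E_m$ satisfies, in the viscosity sense along $\p E_m$, the relation $``\nabla\triangle^{s/2}(\text{dist})=0"$ in the form stated in the Remark after Proposition \ref{flat_1}. Rescaling and letting $m\to\infty$, the nonlocal operator acting on the distance function converges — here one must check that the contributions of $\p E_m^*$ far from the origin, where we only have the $|x'|^{1+\alpha}$ growth, are controlled; this is where the restriction $\alpha<s$ enters, guaranteeing $\int |v(x')|\,(1+|x'|)^{-(n-1)-1-s}\,dx'<\infty$ so the kernel integrates. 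In the limit one obtains that $v$ satisfies $``\nabla\triangle^{(1+s)/2}v=0"$ in $\R^{n-1}$ in the viscosity sense (the order drops by the jump across the interface, producing the fractional parameter $\sigma=(1+s)/2>1/2$). By the Remark following Proposition \ref{flat_1}, applicable since $\sigma>1/2$, $v$ must be linear. A linear $v$ vanishing at $0$ can be included in a cylinder of arbitrarily small flatness $\mu$ at every fixed scale; hence for $m$ large $\p E_m^*$ — which converges to this plane — lies in a cylinder of flatness $2\mu$ at the relevant dilated scale, contradicting the assumption that the inclusion first broke at $\bar k(m)$. This contradiction proves the theorem.

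\textbf{Main obstacle.} The delicate point is the passage to the limit in the nonlocal Euler--Lagrange relation under only the $|x'|^{1+\alpha}$ growth on the limit graph: one needs the far-field tails of the kernel, evaluated on the dilated minimal surfaces, to vanish uniformly, and this is exactly what forces the restriction $\alpha<s$ and what makes the ``improve flatness at all scales at once'' formulation (rather than a single rescaling) indispensable. Establishing the scale-by-scale geometric decay with summable errors, i.e. the correct quantitative Harnack iteration that yields a \emph{uniform} growth bound after dilation, is the technical heart of the argument.
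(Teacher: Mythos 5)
Your proposal is correct and follows essentially the same route as the paper: a compactness/contradiction argument in which a nonlocal Harnack-type iteration (whose tail terms are controlled precisely by the multi-scale flatness hypothesis) yields convergence of the vertical dilations to a H\"older graph $v$ with $|v|\le C(1+|x'|^{1+\alpha})$, the Euler--Lagrange relation of Theorem \ref{E-L} passes to the limit to give $\triangle^{(1+s)/2}v=0$ in the viscosity sense, and the Liouville-type Proposition \ref{flat_1} (here $\sigma=(1+s)/2>1/2$ and $\alpha<s=2\sigma-1$, so the Proposition itself rather than its Remark is what applies) forces $v$ to be linear, contradicting the failure of the inclusion at the first bad scale. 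Your identification of where $\alpha<s$ enters --- integrability of the far-field tails and the growth threshold in the Liouville theorem --- matches the paper exactly.
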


Rescaling by a factor $2^{k}$, the situation above can be described as follows. There exists $k_0$ depending on $s$, $n$, $\alpha$, such that if for some $k\ge k_0$

$$\partial E \cap B_{2^l} \subset \{|x \cdot e_l| \leq 2^l2^{\alpha(l-k)}\},\quad |e_l|=1,\quad \forall l \ge 0,$$
then the inclusion holds also for $l=-1$, i.e.

$$\partial E \cap B_{1/2} \subset \{|x \cdot e_{-1}| \leq 2^{-1} 2^{-\alpha(k+1)}\}.$$

In other words, if $\p E
\cap B_{2^l}$ has $2^{\alpha(l-k)}$ flatness all the way to $B_1$,
it also has it for $B_{1/2}$, and we may dilate and repeat the
same argument from there on. Note that for $l>k$ the flatness
condition becomes trivial, and for $k=k_0$ we can attain that
condition if we start with a very flat solution in $B_1$ (with
$2^{-(\alpha +1)k_0}$ flatness) and we dilate it by a factor
$2^{k_0}$. The idea of the proof is then by compactness: if not,
we will take a sequence $E_m$ of solutions for $m \to \infty$,
make a vertical dilation $E_m^*$ and show that there is a
subsequence converging to the graph of a continuous function $u$
which solves $\triangle^{(1+s)/2}u=0$. In order to do that we need
first a rough ``Harnack type" inequality that will provide the
continuity of $u$. For a similar compactness argument see
\cite{S}.

\

\begin{lem}
Assume that for some large $k$, ($k>k_1$)
$$\partial E \cap B_1 \subset \{|x_n| \leq a:=2^{-k\alpha}\}$$
and
$$\partial E \cap B_{2^l} \subset \{|x \cdot e_l| \leq a 2^{l(1+\alpha)}\}, \quad l=0,1,\ldots, k.$$
Then either
$$\partial E \cap B_\delta \subset \{\frac{x_n}{a} \leq 1-\delta^2\}$$
or
$$\partial E \cap B_\delta \subset \{\frac{x_n}{a} \geq -1+\delta^2\},$$
for $\delta$ small, depending on $s,n,\alpha.$

\end{lem}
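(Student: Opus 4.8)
The statement is a ``Harnack-type'' dichotomy: given a minimal set $\partial E$ trapped between the hyperplanes $\{x_n=\pm a\}$ in $B_1$, with the flatness also controlled on the larger scales $B_{2^l}$, $l\le k$, we want to conclude that near the origin $\partial E$ actually peels away from at least one of the two boundary hyperplanes by a definite amount $\delta^2 a$. The natural strategy is by contradiction using the Euler--Lagrange relation of Theorem \ref{E-L}, so let me describe that plan. Suppose the conclusion fails for a point $p\in\partial E\cap B_\delta$; then $\partial E$ comes within $\delta^2 a$ of \emph{both} hyperplanes $\{x_n=a\}$ and $\{x_n=-a\}$ inside $B_\delta$. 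Pick the hyperplane $\{x_n=a\}$ and lower it: consider the family of spheres (or, after a preliminary localization, spherical caps) of large radius $R\sim a^{-1}$ tangent from below to $\{x_n=a\}$ near the origin, and translate/inflate one of them, $B_R(z)$, until it first touches $\partial E$ at some point $x_0\in\partial E\cap B_{2\delta}$. Because the first scale traps $\partial E$ in $\{|x_n|\le a\}$ and the larger scales $B_{2^l}$ trap it in the mildly growing strips $\{|x\cdot e_l|\le a2^{l(1+\alpha)}\}$ with $\alpha<s$, one checks that this contact happens while $B_R(z)\subset E$ locally; hence $E$ has an interior tangent ball of radius $R$ at $x_0$, and the Euler--Lagrange inequality of Theorem \ref{E-L} applies (after rescaling so the tangent ball has radius $1$):
$$\int_{\R^n}\frac{\chi_{E'}-\chi_{\mathcal C E'}}{|x-x_0'|^{n+s}}\,dx\le 0,$$
where $E'$, $x_0'$ are the rescaled set and contact point.

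The crux is then a quantitative estimate of this integral that produces a \emph{strictly positive} lower bound, contradicting $\le 0$. The plan is to split the integral into three regions and estimate each:

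\begin{itemize}
\item[(a)] \emph{The inner region} $|x-x_0'|\le C$ (comparable to the radius of the tangent ball): here $E'$ contains the tangent ball $B_1$, and if the conclusion fails, $\partial E'$ also nearly reaches the \emph{other} side, so a definite portion of $\mathcal C E'$ near $x_0'$ has been replaced, relative to the ``half-space'' comparison, by points of $E'$ on one side; this region contributes a positive amount bounded below by a universal constant depending only on $n,s$ (this is where the $\delta^2$ improvement is \emph{consumed}: if $\partial E$ stayed $\delta^2 a$-close to both hyperplanes near $0$, the excess mass of $E$ on the ``$E$-side'' of the tangent ball, beyond the flat configuration, is at least of order $\delta^{n-1}$ or so).
\item[(b)] \emph{The slab region} $C\le |x-x_0'|\le R_0:=\delta a^{-1}$ (i.e.\ $|x-x_0|\le\delta$ before rescaling): here $\partial E$ lies in the slab $\{|x_n|\le a\}$, which after rescaling is $\{|x_n|\le \text{(small)}\}$, so $\chi_{E'}-\chi_{\mathcal C E'}$ is $\pm1$ outside a thin slab of half-width $\to 0$, and its principal-value integral against the odd-in-$x_n$ kernel is $O(a\cdot\text{something})\to 0$; more precisely the antisymmetric cancellation in $x_n\mapsto -x_n$ leaves only the slab contribution, which is $O(\text{slab width})$ and negligible.
\item[(c)] \emph{The far region} $|x-x_0'|\ge R_0$: here we only know the mildly growing flatness on the dyadic scales $B_{2^l}$, $l\le k$, and we must sum $\sum_{l} (\text{flatness on }B_{2^l})\cdot 2^{-l s}$ type contributions. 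Because the flatness is $a2^{l(1+\alpha)}$ with $\alpha<s$, after rescaling by $a^{-1}$ the relevant geometric series $\sum_l 2^{l(1+\alpha)}2^{-l(1+s)}=\sum_l 2^{l(\alpha-s)}$ \emph{converges} (this is exactly why the hypothesis goes ``all the way to $B_1$'' and why $\alpha<s$ is needed), and the total far contribution is $O(\delta^{s-\alpha})$ or similar, again small compared to the fixed positive lower bound from (a) once $\delta$ is chosen small.
\end{itemize}

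Combining (a), (b), (c): the positive inner contribution beats the small slab and far contributions, so the left side of the Euler--Lagrange inequality is $>0$, a contradiction. Hence the dichotomy holds with $\delta$ chosen small depending only on $s,n,\alpha$, and $k_1$ large enough that all the ``small'' error terms can be absorbed.

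\textbf{Main obstacle.} The delicate point is item (c) together with the bookkeeping that links the multi-scale hypothesis to a single clean contradiction: one must verify that the tangent-ball contact point $x_0$ can genuinely be found inside $B_{2\delta}$ (so that the slab estimate (b) is available on the whole annulus up to $R_0$) and that the ball used is honestly inside $E$ locally, which uses the clean ball condition (Corollary \ref{cl_ba}) and the density estimates to rule out $\partial E$ sneaking around the tangent ball. The second subtle point is making the inner gain in (a) genuinely \emph{quantitative} in $\delta$: one needs that failure of \emph{both} alternatives forces $\partial E$ to oscillate between the two hyperplanes within $B_\delta$, and then a barrier/comparison argument (sliding a large sphere) shows the interior of $E$ must contain a definite region on the far side of the tangent ball, contributing a lower bound independent of how small the error terms are. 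Getting the constant in (a) to dominate $C\delta^{s-\alpha}+C(\text{slab width})$ simultaneously — i.e.\ choosing $\delta$ first to fix the inner gain, then $k_1$ large to kill the errors — is the balancing act that makes the lemma work.
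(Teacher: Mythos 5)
Your overall strategy is the one the paper uses: reduce to a measure dichotomy on the thin cylinder $D=\{|x'|\le\delta\}\times\{|x_n|\le a\}$ (at least half of $D$ lies in $E$ or in $\mathcal C E$), slide a barrier of curvature $\sim a$ from the corresponding side until it touches $\partial E$ at a point $y$ with $|y'|\le 2\delta$, invoke the viscosity Euler--Lagrange relation of Theorem \ref{E-L} there, and derive a contradiction by showing the integral is strictly positive: a gain from the excess mass of $E$ in $D$ versus error terms controlled by the multi-scale flatness hypothesis, where the convergent series $\sum_l 2^{l(\alpha-s)}$ (i.e.\ $\alpha<s$) is exactly what makes the far field harmless. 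The paper uses the parabola $x_n=-\frac a2|x'|^2$ rather than a sphere of radius $a^{-1}$, which is an immaterial difference, and it compares $\chi_E-\chi_{\mathcal C E}$ with $\chi_P-\chi_{\mathcal C P}$ ($P$ the subgraph of the tangent parabola, $P\subset E$ near $y$) so that the near-field integral splits as $I_1+I_2$ with $I_2=\int\chi_{E\setminus P}|x-y|^{-n-s}\,dx\ge 0$ automatically.

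The genuine gap is in your quantitative accounting of the gain and the errors. First, the inner gain in your item (a) is \emph{not} ``a universal constant depending only on $n,s$'': every term in the Euler--Lagrange integral here scales linearly in $a$. The correct statement is $I_2\ge c\,|D|/\delta^{n+s}\sim c\,a\delta^{n-1}/\delta^{n+s}=c\,a\,\delta^{-1-s}$, since at least a quarter of the cylinder $D$ (measure $\sim a\delta^{n-1}$, within distance $\sim\delta$ of $y$) lies in $E\setminus P$. The argument closes not because the gain is absolute but because its coefficient $\delta^{-1-s}$ blows up as $\delta\to0$ while the barrier term satisfies $I_1\ge -C(n,s)\,a$ (valid once $a\le\delta$, i.e.\ $k_1=k_1(\delta)$ large) and the far field is $\le C(n,s,\alpha)\,a$ by the dyadic estimate (\ref{har}); so one fixes $\delta$ small first and then $k_1$. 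Second, your far-field exponent has the wrong sign: starting the outer region at radius $1/2$ the dyadic sum gives $O(a)$, and starting it at radius $\delta$ it gives $O(a\,\delta^{\alpha-s})$, which \emph{grows} as $\delta\to0$ (though still more slowly than $a\delta^{-1-s}$); it is not $O(\delta^{s-\alpha})$. Third, the rescaling to a unit tangent ball muddles the scales --- your ``inner region comparable to the tangent-ball radius'' would have size $a^{-1}\gg1$ in the original variables --- and is unnecessary: the paper works at the original scale with the two-region split $B_{1/2}(y)$ and its complement, and the interior-tangent-ball issue you flag as the main obstacle dissolves because $P\subset E$ locally is immediate from the first-touching property of the slid parabola.
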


The hypothesis above can be interpreted in the following way. We are not only requiring flatness of $\p E \cap B_1$ of order $a=2^{-k \alpha}$ but also flatness for all diadic balls $B_{2^l}$ of order $a 2^{l \alpha}$, from $B_1$ to $B_{2^k}$, i.e. until flatness becomes of order one.

\begin{proof}
If $y \in \partial E \cap B_{1/2}$, the non-local contribution to the Euler-Lagrange equation is

$$
\left| \int_{\R^n \setminus B_{1/2}(y)}
\frac{\chi_E-\chi_{\mathcal C E}}{|x-y|^{n+s}}\;dx\right|  \leq C
\int_{1/2}^{2^{k-1}} \frac{ar^{n-1+\alpha}}{r^{n+s}}\;dr +
C \int_{2^{k-1}}^{\infty} \frac{r^{n-1}}{r^{n+s}}\;dr\\
$$

\

\begin{equation}{\label{har}}
 \leq C(a + 2^{-ks})  \leq C(n,s, \alpha)a.
\end{equation}
Let us recall now that $E$ contains $\{x_n<-a\} \cap B_1$ and assume that it contains
more than half of the measure of the cylinder
$$D:=\{|x'| \leq \delta\} \times \{|x_n| \leq a \}.$$ Then we show that $E$ must contain $$\{x_n \geq
(-1+\delta^2)a\} \cap B_\delta.$$

Indeed, if the conclusion does not hold then, when we slide by below the
parabola
$$x_n=-\frac{a}{2}|x'|^2$$
we touch $\p E$ at a first point $y \in \p E$ with
$$ |y'| \le 2 \delta, \quad |y_n| \le 2 a \delta^2.$$

Denote by $P$ the subgraph of the tangent parabola to $\p E$ at
$y$. We write

$$ \int_{B_{1/2}(y)}
\frac{\chi_E - \chi_{\mathcal C E}}{|x-y|^{n+s}}\;dx
=\int_{B_{1/2}(y)} \frac{\chi_P - \chi_{\mathcal C
P}}{|x-y|^{n+s}}\;dx+ \int_{B_{1/2}(y)}\frac{\chi_{E \setminus
P}}{|x-y|^{n+s}}$$

$$=I_1+I_2.$$

If $a \le \delta$ we estimate
$$I_1 \ge - C \int_{0}^{1/2} \frac{ar^n}{r^{n+s}}\;dr \ge -C(n,s) a,$$
and, since $E \setminus P$ contains more than $1/4$ of the measure
of the cylinder $D$,
$$I_2 \ge C a\delta^{n-1}/(4\delta)^{n+s} \ge
C(n)\delta^{-1-s}a.$$

If $\delta>0$ is chosen small depending on $n$, $s$ and $\alpha$ (and $k_1(\delta)$ large so that $a \le \delta$)
then

$$ \int_{\R^n}
\frac{\chi_E-\chi_{\mathcal C E}}{|x-y|^{n+s}}\;dx >0 $$ and we
contradict the Euler-Lagrange equation at $y$.

\end{proof}

As $k$ becomes much larger than $k_1$, we can once again apply Harnack inequality several times. Indeed, after a
dilation of factor $1/\delta=2^{m_0}$ we have that in $B_1$, $\p
E$ is included in a cylinder of flatness $(a(1-\delta^2/2)/ \delta$.
Clearly, as we double the balls the flatness $a(r)$ gets
multiplied at most by a factor $2^\alpha$ as long as $a(r) \le 1$,
hence we satisfy again the hypothesis of the lemma. We can apply Harnack inequality
as long as the flatness of the inner cylinder remains less than
$\delta$, thus we can apply it roughly $c |\log a|$ times. As a
consequence we obtain compactness of the sets

$$\p E^*:=\{(x',\frac{x_n}{a})|\quad x \in \p E \},$$
as $a \to 0$.

More precisely, we consider minimal surfaces $\p E$
with $0\in \p E$, for which there exists $k$ such that

$$\partial E \cap B_1 \subset \{|x_n| \leq a:=2^{-k\alpha}\}$$
and
$$\partial E \cap B_{2^l} \subset \{|x \cdot e_l| \leq a 2^{l(1+\alpha)}\}, \quad l=0,1,\ldots, k.$$

\

\begin{lem}{\label{min_cor}}
If $E_m$ is a sequence of minimal sets with $a_m \rightarrow 0$
there exists a subsequence $m_k$ such that
$$\partial E_{m_k}^* \rightarrow (x',u(x'))$$
uniformly on compact sets, where $u:\R^{n-1} \to \R$ is Holder
continuous and
$$u(0)=0, \quad |u| \leq C(1+ |x|^{1+\alpha}).$$
\end{lem}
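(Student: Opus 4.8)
The plan is to extract a convergent subsequence of $\partial E_m^*$ whose limit is the graph of a function $u$, then use the iterated Harnack inequality (the preceding Lemma) to establish a modulus of continuity for $u$ that is uniform in $m$, and finally identify the growth and value at $0$ from the hypotheses.

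First I would set up the compactness. Since $0\in\partial E_m$ and the flatness hypotheses give $\partial E_m\cap B_{2^l}\subset\{|x\cdot e_l^{(m)}|\le a_m 2^{l(1+\alpha)}\}$ for $l=0,\dots,k_m$, the vertically dilated boundaries $\partial E_m^*=\{(x',x_n/a_m):x\in\partial E_m\}$ lie, over $\{|x'|\le 2^l\}$, in the slab of height $\sim 2^{l(1+\alpha)}$ after a small rotation; in particular over any fixed compact set in $\R^{n-1}$ the sets $\partial E_m^*$ are contained in a fixed bounded region. By the uniform density estimate (Theorem \ref{un_de}) and the clean ball condition (Corollary \ref{cl_ba}), $\partial E_m$ cannot have ``holes'' or ``fat'' pieces, so $\partial E_m^*$ is in fact squeezed between two graphs whose vertical distance tends to $0$; this is exactly what the iterated Harnack lemma gives. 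More precisely, for each dyadic ball $B_\delta(x_0')$ with $x_0'$ in a fixed compact set, repeatedly applying the preceding Lemma (doubling back up, noting the flatness multiplies by at most $2^\alpha$ per doubling as long as it stays $\le 1$, so Harnack applies $\sim c|\log a_m|$ times) forces the oscillation of $x_n/a_m$ on $\partial E_m^*\cap(B_\delta(x_0')\times\R)$ to decay geometrically in the number of iterations. Choosing a subsequence via Arzel\`a--Ascoli (after checking $\partial E_m^*$ really is a graph at the relevant scales, which again follows from the clean ball condition preventing vertical overlaps) produces the uniform limit $(x',u(x'))$.

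Next I would quantify the Hölder modulus exactly as in the classical sketch recalled in \S5: if $x_1',x_2'$ are two points and $j$ is the first integer with $|x_1'-x_2'|\sim 2^{-j}$, then after $j$ applications of Harnack the normalized oscillation of $\partial E_m^*$ over the enclosing cylinder of radius $2^{-j}$ is at most $(1-\delta^2)^{\,c j}$, hence $|u(x_1')-u(x_2')|\le C|x_1'-x_2'|^\beta$ with $\beta=\beta(s,n,\alpha)>0$; this bound is independent of $m$, so it passes to the limit. Since $0\in\partial E_m$ for every $m$ and the dilation fixes the $x'=0$ axis, $(0,0)\in\partial E_m^*$, so $u(0)=0$. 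The growth bound $|u(x')|\le C(1+|x'|^{1+\alpha})$ comes directly from the hypothesis $\partial E_m\cap B_{2^l}\subset\{|x\cdot e_l^{(m)}|\le a_m 2^{l(1+\alpha)}\}$: dividing by $a_m$ gives $|u|\le 2^{l(1+\alpha)}\sim|x'|^{1+\alpha}$ on $\{|x'|\sim 2^l\}$ for $l\le k_m$, and on the trivial range $l>k_m$ the flatness is already $\ge 1$ so the bound is automatic; in the limit this yields the stated estimate.

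The main obstacle I expect is making rigorous the claim that $\partial E_m^*$ is actually (asymptotically) a \emph{graph} rather than merely a set trapped in a thin slab --- i.e., ruling out vertical doubling-back of $\partial E_m$ inside the slab at the scales where we want to apply Harnack. This is where the density estimates and the clean ball condition of \S4 do the real work: a vertical segment of $\partial E_m$ of length comparable to the slab height would, by the clean ball condition, force balls of $E_m$ and $\mathcal C E_m$ of comparable radius stacked vertically, which contradicts the flatness once $a_m$ is small. The bookkeeping of how the rotated directions $e_l^{(m)}$ vary with $l$ (they only drift by a controlled amount between consecutive scales, by the improvement-of-flatness iteration) also needs care so that ``graph over $e_n$'' is preserved across all the scales used; but this is routine given the geometric setup, and the genuine analytic content is the Harnack lemma already proved.
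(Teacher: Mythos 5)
Your proposal is correct and follows essentially the same route as the paper: iterate the Harnack-type lemma around each point of $\partial E_m^*$ to trap it between graphs $\pm C\max\{b_m^\gamma,|x'|^\gamma\}$ with $b_m\to 0$, apply Arzel\`a--Ascoli (the limit is automatically a graph because of this squeezing, so the ``is it a graph?'' worry you raise can be sidestepped), and read off the growth from the dyadic flatness hypothesis. The one place your ``divide by $a_m$'' shortcut hides real content is the drift of the directions $e_l$ across scales, which the paper handles by introducing linear approximations $p_k$ with $|u(x')-p_k\cdot x'|\le 2^{k(1+\alpha)}$ on $B_{2^k}$ and $|p_{k+1}-p_k|\le C2^{k\alpha}$, then telescoping --- exactly the ``routine bookkeeping'' you deferred.
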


\begin{proof}

From the discussion above when $x'\in B_1$, $\p E_m^*$ is included
between the graphs of $$\pm C\max\{ b_m^\gamma, |x'|^\gamma\}$$
with $b_m \to 0$ as $m \to \infty$. This statement remains valid
if we translate the origin at some other point $x_0\in \p E_m^*$
with $|x_0'| \le 1/2$. Thus, by the Theorem of Arzela-Ascoli, we
can find a subsequence of $\partial E_m^*$'s that converges
uniformly in $B_{1/2}$ to the graph of a Holder continuous
function.

The same analysis can be done in larger and larger balls since we
can estimate for fixed $l$ the angle between $e_l$ and $e_{l+1}$
by the flatness coefficient $2^{\alpha (l-m)}$. Thus we obtain the
uniform convergence on compact sets of $\p E_{m_k}^*$ to the graph
of $u$. Clearly, $u(0)=0$ and there exist $p_k \in \R^{n-1}$,
$p_0=0$, such that
$$|u(x')-p_k \cdot x'| \leq 2^{k(1+\alpha)} \quad \text{in $B_{2^k}$, for all $k \geq 0$}.$$
We see that
$$| p_{k+1}- p_k| \le C 2^{k \alpha},$$
thus
$$|p_k| \le C 2^{k \alpha},$$
which implies the growth condition on $u$.

\end{proof}

\begin{lem} The limit function $u$ satisfies
$$\triangle^\frac{s+1}{2} u =0 \quad \mbox{in the viscosity sense in $\R^{n-1}$},$$
and therefore is linear.
\end{lem}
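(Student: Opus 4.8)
The plan is to show that the limit graph $u$ from Lemma \ref{min_cor} satisfies $\triangle^{(s+1)/2}u=0$ in the viscosity sense in $\R^{n-1}$, and then invoke Proposition \ref{flat_1} (in the form of the Remark following it, since $(s+1)/2$ may be $\le 1/2$ only when $s\le 0$, but in any case $\sigma=(s+1)/2\in(1/2,1)$ here, so Proposition \ref{flat_1} applies directly with $\alpha<s=2\sigma-1$) to conclude $u$ is linear. The heart of the matter is the passage to the viscosity relation, which is done by contradiction via a touching argument: suppose a smooth test function $\varphi$ touches $u$ from below at a point $x_0'$, say $x_0'=0$ after translation, with $\triangle^{(1+s)/2}\varphi(0)<-\mu<0$. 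I would like to transfer this to a test surface touching $\p E_m$ from the correct side and contradict Theorem \ref{E-L}.

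The key steps, in order, are as follows. First, by the uniform convergence $\p E_{m_k}^*\to(x',u(x'))$ on compact sets together with the fact that $\varphi$ touches $u$ strictly from below (one may subtract a small quadratic to make the touching strict and then let that correction vanish), there exist points $y_m\in\p E_m$ whose rescaled versions $y_m^*$ converge to $(0,u(0))$ and such that the vertical translate of $a_m\varphi(x')$ (undoing the dilation $x_n\mapsto x_n/a_m$) lies below $\p E_m$ near $y_m$ and touches it at $y_m$. Thus the subgraph $P_m$ of the rescaled test function $a_m\varphi$, suitably vertically translated, is contained in $E_m$ in a neighborhood $B_\rho(y_m)$, with $y_m\in\p P_m\cap\p E_m$; in particular $E_m$ has an interior tangent ball at $y_m$ (the test function is smooth, so its subgraph has a tangent ball from inside), so by Theorem \ref{E-L} (applied after rescaling $E_m$ by $a_m$ so that the tangent ball has a fixed radius, or directly via the viscosity-supersolution statement in the Corollary to Theorem \ref{E-L}) we get
$$\int_{\R^n}\frac{\chi_{E_m}-\chi_{\mathcal C E_m}}{|x-y_m|^{n+s}}\,dx\le 0.$$
Second, I split this integral into the contribution of $B_\rho(y_m)$ and its complement. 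On $B_\rho(y_m)$, since $E_m\supset P_m$ and $\mathcal C E_m\subset \mathcal C P_m$ there, the integrand dominates that of $P_m$; rescaling vertically by $a_m$ and horizontally keeping scale one, a computation shows the $B_\rho$-contribution converges (after dividing by the appropriate power of $a_m$) to $c_{n,s}\triangle^{(1+s)/2}\varphi(0)<-c_{n,s}\mu$, using that the $(1+s)/2$-Laplacian in $\R^{n-1}$ arises precisely from the $n$-dimensional $s$-fractional kernel integrated against a vertically thin subgraph (this is the dimensional-reduction identity, the analogue of $-\Delta_{x'}v$ appearing in the classical case). Third, the far-field contribution $\R^n\setminus B_\rho(y_m)$ is $O(a_m\cdot)$ small after the same normalization, by exactly the estimate \eqref{har} from the preceding Lemma — the flatness hypothesis at all dyadic scales $B_{2^l}$, $l=0,\dots,k$, which is inherited by $E_m$, is what makes this tail integral of order $a_m$ rather than order one. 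Combining, after dividing the Euler-Lagrange inequality by $a_m$ and letting $m\to\infty$, we obtain $-c_{n,s}\mu + o(1)\ge 0$, a contradiction. The subsolution side is symmetric, using supersolutions/touching from above and the Remark after Proposition \ref{flat_1} if needed. Hence $u$ is a viscosity solution of $\triangle^{(1+s)/2}u=0$, and Proposition \ref{flat_1} forces $u$ to be linear.

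The main obstacle is the second step: rigorously establishing the dimensional-reduction identity, i.e. that the normalized limit of the local contribution $a_m^{-1}\int_{B_\rho(y_m)}(\chi_{P_m}-\chi_{\mathcal C P_m})|x-y_m|^{-n-s}\,dx$ is a positive constant times $\triangle^{(1+s)/2}\varphi(0)$. One must carefully track the vertical rescaling: write $x=(x',x_n)$, $x_n = a_m t$, so $|x-y_m|^{n+s}\approx |x'-y_m'|^{n+s}$ to leading order since $|x_n - (y_m)_n|\lesssim a_m|x'|^2\ll|x'|$ away from $x'=y_m'$, while near $x'=y_m'$ the vertical extent $a_m|\varphi(x')-\varphi(y_m')|$ of the region $P_m\triangle\{x_n<(y_m)_n\}$ is what survives; integrating the kernel in the $x_n$ variable over this thin slab of signed height $a_m(\varphi(x')-\varphi(y_m') - \nabla\varphi(y_m')\cdot(x'-y_m'))$ and then in $x'$ reproduces, up to the constant $c_{n,s}=\int_\R(1+\tau^2)^{-(n+s)/2}d\tau$ and after dividing by $a_m$, exactly the principal-value integral defining $\triangle^{(1+s)/2}\varphi$ at $y_m'$. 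Making this uniform in $m$ and handling the principal-value cancellation near the singularity — using that $\varphi$ is $C^{1,1}$ so the second-order Taylor remainder is controlled — is the delicate computation; everything else (compactness, the touching-point extraction, the tail estimate) is routine given the machinery already developed.
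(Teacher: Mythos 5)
There is a genuine gap in your second and third steps. You split the Euler--Lagrange integral into $B_\rho(y_m)$ and its complement and claim that the far field is negligible ``after the same normalization'' by (\ref{har}). But (\ref{har}) only bounds the far-field contribution by $Ca$ \emph{before} dividing by $a$; after the normalization by $1/a$ that is needed to make the near-field term converge to a fractional Laplacian, the tail is $O(1)$, not $o(1)$. This is not a technicality: the operator $\triangle^{(1+s)/2}$ is nonlocal, and the portion of its defining integral over $\{|x'|>\rho\}$ must come from exactly that far-field region of the $n$-dimensional integral, where $E_m$ is no longer above the subgraph of $a\varphi$ but is described by the limit function $u$ itself. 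Discarding it, you would at best control a truncated integral of $\varphi$ over $B_\rho$, which says nothing about $\triangle^{(1+s)/2}\varphi(0)$, and your claimed contradiction $-c_{n,s}\mu+o(1)\ge 0$ does not follow.

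The paper's proof uses a three-scale decomposition $D_\delta$, $D_R\setminus D_\delta$, $\R^n\setminus D_R$ precisely to handle this. The test function enters only in the innermost cylinder $D_\delta$, where the tangent paraboloid gives the one-sided bound $\ge -C(\varphi)\delta^{1-s}$ controlling the principal-value singularity. The intermediate region produces the main term
$2\int_{B_R\setminus B_\delta}\bigl(u(x')-u(x_0')\bigr)|x'-x_0'|^{-n-s}\,dx'$ --- the fractional Laplacian of the \emph{limit function} $u$, not of $\varphi$, obtained because the kernel is essentially constant in $x_n$ across the thin slab of height $\sim a(u(x')-u(x_0'))$ (note the constant is $2$, not $\int_\R(1+\tau^2)^{-(n+s)/2}d\tau$: the slab has height $O(a)\ll|x'-x_0'|$, so one does not integrate the kernel over all of $\R$ in the vertical variable). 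The outermost tail is $O(R^{\alpha-s})+O(a^\eta)$, and it vanishes only in the limit $R\to\infty$ thanks to the growth bound $|u|\le C(1+|x|^{1+\alpha})$, $\alpha<s$, from Lemma \ref{min_cor} --- this is exactly why that growth control was built into the compactness statement. Sending $a,\eps\to0$, then $\delta\to0$ and $R\to\infty$, yields the viscosity inequality for $u$; your final step (Proposition \ref{flat_1} with $\sigma=(1+s)/2>1/2$) is then correct and is also how the paper concludes linearity.
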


\begin{proof}
Assume $\varphi+|x'|^2$ is a smooth tangent function that touches
$u$ by below, say for simplicity, at the origin. We can find
$\partial E$ minimal, $a$ small such that $\partial E$ is included
in a $a\eps$ neighborhood of $(x', au(x'))$ for $|x'| \leq R$ and
$\partial E$ is touched by below at $x_0,$ $|x_0'| \leq \eps$ by a
vertical translation of $a \varphi$.

From the Euler-Lagrange equation
$$ \frac{1}{a}\int_{\R^n }
\frac{\chi_E - \chi_{\mathcal C E}}{|x-x_0|^{n+s}} \;dx\leq0.$$

We estimate this integral in terms of the function $u$ by
integrating on square cylinders with center $x_0$, i.e.

$$D_r:=\{|x'-x_0'|<r, \quad |(x-x_0)\cdot e_n|<r\}.$$

Fix $\delta$ small and $R$ large, and assume $a, \eps \ll \delta.$
In $D_\delta$ we use that $E$ contains the subgraph $P$ of a
translation of $a \varphi$ thus,

$$\frac{1}{a}\int_{D_\delta }
\frac{\chi_E - \chi_{\mathcal C E}}{|x-x_0|^{n+s}} \;dx \ge
\frac{1}{a}\int_{D_\delta } \frac{\chi_P - \chi_{\mathcal C
P}}{|x-x_0|^{n+s}} \;dx $$

\

$$\ge - \frac{1}{a}\int_0^{2\delta} \frac{C(\varphi) ar^n}{r^{n+s}}\;dr =- C(\varphi) \delta^{1-s}.$$

If

$$x \in A:=(D_R \setminus D_\delta) \cap \{(x-x_0)\cdot e_n \le
C(R)a \}$$ we have

$$\left | \frac{1}{|x-x_0|^{n+s}} - \frac{1}{|x'-x_0'|^{n+s}} \right | \le C(\delta,R)a^2,$$
and we estimate

$$\frac{1}{a}\int_{D_R \setminus D_\delta}
\frac{\chi_E - \chi_{\mathcal C E}}{|x-x_0|^{n+s}} \;dx
=\frac{1}{a}\int_A \frac{\chi_E - \chi_{\mathcal C
E}}{|x-x_0|^{n+s}} \;dx$$

\

$$=\frac{1}{a}\int_{B_R \setminus B_\delta} \frac{a2(u(x')-u(x_0')+O(\eps))}{|x'-x_0'|^{n+s}}\;dx' + O(a^2)$$

\

$$= 2\int_ {B_R \setminus B_\delta} \frac{u(x') - u(x_0')}{|x'-x_0'|^{n+s}}\;dx'+O(\eps) +O(a^2).$$

\

In $\R^n \setminus D_R$ we estimate as in (\ref{har})

$$ \frac{1}{a} \left|\int_{\mathcal C D_R} \frac{\chi_E - \chi_{\mathcal C
E}}{|x-x_0|^{n+s}}\right| \leq
\frac{1}{a}\left(\int_{R/2}^{\infty} \frac{a r^{n+\alpha
-1}}{r^{n+s}}\;dr + C a^{1+\eta}\right)$$
$$\le C R^{\alpha -s} +Ca^\eta.$$

We let $\eps, a \rightarrow 0$ and find from the Euler-Lagrange
equation
$$\int_\delta^R \frac{u(x')-u(0)}{|x'|^{n+s}}\;dx' \leq C(\delta^{1-s}+R^{\alpha -s}).$$
We obtain the desired result as $\delta \rightarrow 0, R
\rightarrow \infty.$
\end{proof}

{\bf Proof of Theorem \ref{min_flat}}

Assume by contradiction that Theorem \ref{min_flat} does not hold.
Then we can find a sequence of minimal surfaces $\p E_m$ with $a_m
\to 0$ such that they satisfy the hypothesis of Lemma
\ref{min_cor} but each $\p E_m$ cannot be included in a cylinder
of flatness $2^{-\alpha}a_m$ in $B_{1/2}$. This contradicts the
fact that there exists a subsequence $\p E_{m_k}^*$ that converges
uniformly on compact sets to a linear function passing through the
origin.

\qed

\

\section{\bf The extension problem}

\

The purpose of this section is to extend to our surfaces $\p E$ the classical monotonicity formula for minimal surfaces:

{\it If $\p E$ is a minimal surface then

$$J(r)=\frac{Area(\p E \cap B_r)}{r^{n-1}}$$
is a monotone function of $r$. The function $J(r)$ is constant for
cones and attains its minimum for a hyperplane, with a gap between
the hyperplane and all other minimal cones.}

The quantity that we will consider is somewhat related to the
local energy of $E$ in the ball of radius $r$, but we need to go
to an extension in one extra variable to define it.

Let $u$ be a function in $\R^n$ such that $$\int_{\R^n}
\frac{|u(x)|}{(1+|x|^2)^{\frac{n+s}{2}}} \;dx <\infty.$$

We consider the extension $\tilde u$ of $u$,

$$\tilde u: \R^{n+1}_+ \rightarrow \R, \quad \quad \R^{n+1}_+ = \{(x,z), x \in \R^n, z \geq 0\}$$
which solves
\begin{equation}{\label{extension}}
  \begin{cases}
    div(z^a \nabla \tilde u) =0 & \text{in $\R^{n+1}_+$}, \\
    \tilde u=u & \text{on $\{z=0\}$}
  \end{cases}
\end{equation}
with $$a=1-s.$$

The function $\tilde u$ can be computed explicitly,

$$\tilde u(\cdot, z) = P(\cdot, z) * u, \quad P(x,z)=c_{n,a} \frac{z^{1-a}}{|x^2 +z^2|^{\frac{n+1-a}{2}}}.$$

In \cite{CSi1} it was shown that $$\lim_{z\rightarrow 0} -z^a
\tilde u_z(\cdot,z )= c'_{n,a}(-\Delta)^{s/2}u$$ in the sense of
distributions. When $u \in C_0^\infty(\R^n)$ it can be checked
directly that

\begin{align*}
\int_{\R^{n+1}_+} z^a |\nabla \tilde u|^2\;dx dz&= \int_{\{z=0\}}
(-z^a\tilde u_z)\tilde u\;dx\\ & = \tilde{c}_{n,a}
\int\int\frac{|u(x) - u(y)|^2}{|x-y|^{n+2s}}\;dxdy =
\tilde{c}_{n,a}\|u\|_{H^{s/2}(\R^n)}.
\end{align*}
By approximation, this equality holds for all functions $u \in
H^{s/2}$ that are compactly supported. As in Definition \ref{2.1},
we introduce the local contribution of the $H^{s/2}$ seminorm of
$u$ in $B_1$ i.e.

$$J_r(u):= \int\int \frac{|u(x) -
u(y)|^2}{|x-y|^{n+s}}\chi_{B_r}(x)(\chi_{B_r}(y) + 2 \chi_{\R^n
\setminus B_r}(y))\;dxdy.$$

Notice that if $u,v \in H^{s/2}$ and $u=v$ outside $B_r$ then

$$\|u\|_{H^{s/2}}^2 - \|v\|_{H^{s/2}}^2 = J_r(u) - J_r(v).$$

If $u=\chi_E-\chi_{\mathcal{C} E}$ then

$$J_r(u)= 2 \mathcal{J}_{B_r}(E).$$

\begin{prop}
Let $\Omega$ be a bounded Lipschitz domain in $\R^{n+1}$ and
denote
$$\Omega_0:=\Omega \cap \{z=0\}\subset \R^n, \quad \quad \Omega_+
:= \Omega \cap \{z >0\}.$$

a) If $\Omega_0 \subset \subset B_1$ then

\begin{equation}{\label{star}}
\int_{\Omega_+} z^a |\nabla \tilde u|^2 \le C J_1(u)
\end{equation}
with $C$ depending on $\Omega$.

b) If $B_1 \subset \subset \Omega_0$ and $u$ is bounded in $\R^n$
then

$$J_1(u) \le C(1+\int_{\Omega_+} z^a |\nabla \tilde u|^2).$$
with $C$ depending on $\Omega$, $\|u\|_{L^\infty}$.

\end{prop}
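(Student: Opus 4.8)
\emph{Strategy.} The plan is to compare $J_1(u)$ with the weighted Dirichlet energy of the extension $\tilde u$ over $\Omega_+$ using three inputs: the energy identity $\int_{\R^{n+1}_+}z^a|\nabla\tilde g|^2 = \tilde c_{n,a}\,\|g\|_{H^{s/2}(\R^n)}^2$ for $g\in H^{s/2}(\R^n)$ compactly supported (recalled above); the Poisson representation $\tilde g(\cdot,z)=P(\cdot,z)*g$ with $\int P(\cdot,z)\,dx=1$, which gives that the extension is linear in $g$ and that $\|\tilde g\|_{L^\infty}\le\|g\|_{L^\infty}$; and the regularity theory for $\mathrm{div}(z^a\nabla\,\cdot\,)$, in particular the Caccioppoli inequality, including its version with homogeneous Dirichlet data on a flat portion of $\{z=0\}$.

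\emph{Part (a).} We may assume $J_1(u)<\infty$, and, subtracting the constant $|B_1|^{-1}\int_{B_1}u$ (which changes neither $J_1(u)$ nor $\nabla\tilde u$), that this average vanishes; by the fractional Poincar\'e inequality $\|u\|_{L^2(B_1)}^2\le C[u]^2$, where $[u]^2:=\int_{B_1}\int_{B_1}|u(x)-u(y)|^2|x-y|^{-n-s}\,dx\,dy\le J_1(u)$. I would fix radii $\ov{\Omega_0}\subset B_{\rho'}\subset\subset B_\rho\subset\subset B_1$ and $\eta\in C_c^\infty(B_1)$ with $\eta\equiv1$ on $\ov{B_\rho}$, and split $u=\eta u+(1-\eta)u$, so that by linearity $\tilde u=\widetilde{\eta u}+w$ with $w:=\widetilde{(1-\eta)u}$, whose boundary datum vanishes on $B_\rho\supset\supset\ov{\Omega_0}$. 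A routine splitting of the Gagliardo integral of $\eta u$ over $B_1\times B_1$ and $B_1\times\mathcal C B_1$ (the piece $\mathcal C B_1\times\mathcal C B_1$ vanishing since $\mathrm{supp}\,\eta\subset B_1$) bounds $\|\eta u\|^2_{H^{s/2}(\R^n)}$ by $C\bigl([u]^2+\|u\|_{L^2(B_1)}^2+\int_{B_1}\int_{\mathcal C B_1}|u(x)-u(y)|^2|x-y|^{-n-s}\bigr)\le C\,J_1(u)$; since $\eta u$ is compactly supported, the energy identity gives $\int_{\Omega_+}z^a|\nabla\widetilde{\eta u}|^2\le\int_{\R^{n+1}_+}z^a|\nabla\widetilde{\eta u}|^2\le C\,J_1(u)$.

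It then remains to bound $\int_{\Omega_+}z^a|\nabla w|^2$. I would pick a bounded Lipschitz domain $\Omega'$ with $\ov\Omega\subset\Omega'$ and $\ov{\Omega'}\cap\{z=0\}\subset B_{\rho'}$; since $w=0$ there, the Caccioppoli inequality with homogeneous data on that flat piece gives $\int_{\Omega_+}z^a|\nabla w|^2\le C\int_{\Omega'_+}z^a w^2$. Using $w(\cdot,z)=P(\cdot,z)*[(1-\eta)u]$, the fact that the datum is supported in $\mathcal C B_\rho$ while $\Omega'_+$ meets $\{z=0\}$ only inside $B_{\rho'}$ forces $|w(x,z)|\le C(\Omega)\,\min\{z^s,1\}\,N$ on $\Omega'_+$, where $N:=\int_{\mathcal C B_\rho}|u(y)|(1+|y|)^{-n-s}\,dy$. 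The crucial point is $N\le C\,J_1(u)^{1/2}$: over $B_1\setminus B_\rho$ this uses $\|u\|_{L^2(B_1)}\le C\,J_1(u)^{1/2}$, and over $\mathcal C B_1$ it uses Cauchy--Schwarz together with $\int_{\mathcal C B_1}|u(y)|^2|y|^{-n-s}\,dy\le C\,J_1(u)$ — the latter obtained from $\int_{B_\rho}\int_{\mathcal C B_1}|u(x)-u(y)|^2|x-y|^{-n-s}\le\frac12J_1(u)$, the elementary bound $\int_{B_\rho}|u(x)-u(y)|^2\,dx\ge c\,|u(y)|^2-C\|u\|^2_{L^2(B_1)}$, and $|x-y|\le 2|y|$ for $x\in B_\rho$, $|y|\ge1$. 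Since $\int_0^1 z^{a}\,z^{2s}\,dz<\infty$ (because $a+2s=1+s>0$) and $\Omega'_+$ is bounded, this yields $\int_{\Omega'_+}z^a w^2\le C\,N^2\le C\,J_1(u)$, whence $\int_{\Omega_+}z^a|\nabla\tilde u|^2\le 2\int_{\Omega_+}z^a|\nabla\widetilde{\eta u}|^2+2\int_{\Omega_+}z^a|\nabla w|^2\le C(\Omega)\,J_1(u)$.

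\emph{Part (b) and the main difficulty.} Write $J_1(u)=[u]^2+2\int_{B_1}\int_{\mathcal C B_1}|u(x)-u(y)|^2|x-y|^{-n-s}\,dx\,dy$. The second term is $\le 4\|u\|_{L^\infty}^2\int_{B_1}\int_{\mathcal C B_1}|x-y|^{-n-s}\,dx\,dy$, and this double integral is finite \emph{because $s<1$} (for $x$ at distance $\varrho$ from $\p B_1$ the inner integral is $O(\varrho^{-s})$, and $\int_0^1\varrho^{-s}\,d\varrho<\infty$), so it contributes $\le C(\|u\|_{L^\infty})$. For $[u]^2$ we may assume $\int_{\Omega_+}z^a|\nabla\tilde u|^2<\infty$. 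Since $B_1\subset\subset\Omega_0$, $\Omega$ contains a box $Q=B_R^n\times(0,r)$ with $R>1$, and $Q\subset\Omega_+$, so $\tilde u\in H^1(Q,z^a\,dX)$; by the maximum principle $\|\tilde u\|_{L^\infty}\le\|u\|_{L^\infty}$, hence $\int_Q z^a\tilde u^2\le\|u\|_{L^\infty}^2\int_Q z^a\le C(\Omega,\|u\|_{L^\infty})$. The trace inequality for the weighted Sobolev space $H^1(Q,z^a\,dX)$ (cf.\ \cite{CSi1}) then gives $[u]^2=[\tilde u(\cdot,0)]^2\le C\bigl(\int_Q z^a|\nabla\tilde u|^2+\int_Q z^a\tilde u^2\bigr)\le C\bigl(\int_{\Omega_+}z^a|\nabla\tilde u|^2+1\bigr)$, and adding the two estimates proves $J_1(u)\le C(1+\int_{\Omega_+}z^a|\nabla\tilde u|^2)$ with $C=C(\Omega,\|u\|_{L^\infty})$. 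The hard part of the whole argument is the far-field term $w$ in (a): although $(1-\eta)u$ has no decay and need not lie in $H^{s/2}$, its extension is small and smooth near $\ov{\Omega_0}$ because the datum sits away from $\ov{\Omega_0}$ and the operator degenerates on $\{z=0\}$; turning this into a quantitative bound is where the Caccioppoli inequality with homogeneous boundary data is used, together with the observation that the weighted $L^1$ tail $N$ of $u$ is controlled by $J_1(u)^{1/2}$ — rather than by $\|u\|_{L^\infty}$ — via the long-range part of $J_1$. Part (b), by contrast, is soft once the weighted trace inequality is granted.
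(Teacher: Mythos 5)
Your argument is correct and follows essentially the same route as the paper: for (a) the same cutoff decomposition $u=\eta u+(1-\eta)u$, with the energy identity for the compactly supported piece and the key tail bound $\int |u(y)|(1+|y|)^{-n-s}\,dy\le C J_1(u)^{1/2}$ (after normalizing the mean over $B_1$) for the far piece; and for (b) the same splitting of $J_1$, with the long-range part controlled by $\|u\|_{L^\infty}$ because $s<1$. The only cosmetic differences are that in (a) the paper bounds $z^a|\nabla\widetilde{(1-\eta)u}|$ pointwise directly from the Poisson kernel and then integrates the locally integrable factor $z^{-a}$ (avoiding your Caccioppoli step), and in (b) it derives your weighted trace inequality on the spot by multiplying $\tilde u$ by a cutoff in $\R^{n+1}$ and invoking the minimality of the harmonic extension among functions with the same trace.
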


\begin{proof}

a) Without loss of generality we can assume that $\int _{B_1}u=0.$
Then

$$\int\frac{u(x)^2}{(1+|x|^2)^{\frac{n+s}{2}}}dx \le
\int\int\frac{(u(x)-u(y))^2}{(1+|x|^2)^{\frac{n+s}{2}}}\frac{\chi_{B_1}(y)}{|B_1|}dydx
\le CJ_1(u),$$ and by Holder inequality

$$\int\frac{|u(x)|}{(1+|x|^2)^{\frac{n+s}{2}}}dx \le C
J_1(u)^{1/2}.$$

Let $\varphi: \R^n \rightarrow \R$ be a cutoff function such that
$\varphi = 1$ in $\Omega_0$ and it is compactly supported in
$B_1.$ We write

$$u= u \varphi + u(1-\varphi)= u_1 + u_2,$$ and clearly $\tilde u= \tilde u_1+ \tilde u_2$.
Since $u_1$ is compactly supported we have

$$\int_{\R^{n+1}_+}z^a | \nabla  \tilde u_1|^2 = \tilde c_{n,a}\|u_1\|_{H^{s/2}} = \tilde c_{n,a}J_1( u_1)
\le CJ_1(u).$$ If $(x,z) \in \Omega_+$ then

$$z^a |\nabla \tilde u_2(x,z)| \le C
\int\frac{|u_2(y)|}{{(1+|y|^2)^{\frac{n+s}{2}}}}dy \le
CJ_1(u)^{1/2},$$ hence

$$\int_{\Omega_+}z^a |\nabla \tilde u_2|^2 \le CJ_1(u)$$
which proves a).

\

b) Since $s<1$ we have

$$\int \int \frac{(u(x)-u(y))^2}{|x-y|^{n+s}}\chi_{B_1}(x) \chi_{\mathcal C
B_1}(y) dxdy \le C \|u\|_{L^\infty}^2.$$

Let $\varphi: \R^{n+1} \rightarrow \R$ be a cutoff function
supported in $\Omega$ such that $\varphi = 1$ in $B_1 \cap
\{z=0\}$. Denote by $v(x)=\varphi(x,0)u(x)$. Then

$$1+\int_{\Omega_+} z^a |\nabla \tilde u|^2 \ge c\int z^a |\nabla  (\varphi \tilde
u)|^2$$

$$\ge c\int z^a |\nabla \tilde v|^2 \ge c J_1(v).$$

We obtained the desired result since
$$J_1(\varphi u) \ge \int
\int \frac{(u(x)-u(y))^2}{|x-y|^{n+s}}\chi_{B_1}(x)
\chi_{B_1}(y)dx dy.$$

\end{proof}

{\it Remark 1:} If $\bar v$ is a function defined in a bounded
Lipschitz domain $\Omega \in \R^{n+1}_+$ with

$$\int_\Omega z^a |\nabla \bar v|^2< \infty$$
then, from Holder's inequality,

$$\int_\Omega |\nabla \bar v|< \infty$$
hence we can define the trace of $\bar v$ on $\p \Omega$. Clearly,
the trace of $\tilde u$ on $\Omega_0$ equals $u$.

{\it Remark 2:} Assume $\bar v$ is compactly supported in $\Omega$
and has trace $v$ on $\Omega_0$. Then

$$\int_{\Omega^+} z^a |\nabla \bar v|^2 \ge \int_{\R^{n+1}_+} z^a
|\nabla \tilde v|^2.$$ To see this we denote by $\bar v_k$ the
solution to equation (\ref{extension}) in $B_k^+$ which has trace
$v$ on $\{z=0\}$ and $0$ on $\p B_k \cap \{z>0\}$. Extend $\bar
v_k$ to be $0$ outside $B_k^+$, then for large $k$

$$\int_{\Omega^+} z^a |\nabla \bar v|^2 \ge \int z^a |\nabla \bar
v_k|^2.$$

It can be checked that $\nabla \bar v_k$ converges to $\nabla
\tilde v$ in $L^2(z^adxdz)$ and we obtain the result as $k \to
\infty$.

\begin{lem}
Assume $u$, $v$ are such that $J_1(u)$, $J_1(v)< \infty$ and $v-u$
is compactly supported in $B_1$. Then
\begin{equation}{\label{**}}
\inf_{\Omega, \bar v} \int_{\Omega^+} z^a(|\nabla \bar v|^2 -
|\nabla \tilde u|^2) = \tilde c_{n,a}(J_1(v) - J_1(u))
\end{equation}
where the infimum is taken among all bounded Lipschitz domains
$\Omega$ with $\Omega_0 \subset B_1$ and among all functions $\bar
v$ such that $\bar v -\tilde u$ is compactly supported in $\Omega$
and the trace of $\bar v$ on $\{z=0\}$ equals $v$.
\end{lem}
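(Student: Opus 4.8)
The plan is to exploit the fact that $\tilde u$ is $z^a$-harmonic in $\R^{n+1}_+$, which forces the cross term in the expansion of $|\nabla\bar v|^2$ to be a fixed quantity, depending only on the prescribed boundary datum and not on the competitor. Write $w:=v-u$; since $v-u$ is compactly supported in the \emph{open} ball $B_1$ we have $\mathrm{supp}\,w\subset\overline{B_\rho}$ for some $\rho<1$, and admissibility of $(\Omega,\bar v)$ (``$\bar v-\tilde u$ compactly supported in $\Omega$'' together with ``trace of $\bar v$ equals $v$'') forces $\Omega_0\supset\mathrm{supp}\,w$. Put $\bar w:=\bar v-\tilde u$; it has finite weighted Dirichlet energy, is compactly supported in $\Omega$, and has trace $w$ on $\{z=0\}$. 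Expanding the square,
$$\int_{\Omega^+}z^a\bigl(|\nabla\bar v|^2-|\nabla\tilde u|^2\bigr)=\int_{\Omega^+}z^a|\nabla\bar w|^2+2\int_{\Omega^+}z^a\nabla\bar w\cdot\nabla\tilde u .$$

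First I would show the cross term is the same for every admissible $(\Omega,\bar v)$. Integrating by parts on $\Omega\cap\{z>\eps\}$: the lateral boundary term vanishes because $\bar w\equiv0$ near $\partial\Omega\cap\{z>0\}$, the interior term vanishes because $\mathrm{div}(z^a\nabla\tilde u)=0$, so only the slice $\{z=\eps\}$ contributes; letting $\eps\to0$ and using $\lim_{z\to0}(-z^a\tilde u_z)=c'_{n,a}(-\Delta)^{s/2}u$ from \cite{CSi1} (and $\mathrm{supp}\,w\subset\Omega_0$) gives $\int_{\Omega^+}z^a\nabla\bar w\cdot\nabla\tilde u=c'_{n,a}\int_{\R^n}w\,(-\Delta)^{s/2}u\,dx$, independent of $\Omega$ and $\bar v$. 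Polarizing the identity $\int_{\R^{n+1}_+}z^a|\nabla\tilde f|^2=\tilde c_{n,a}\int\int\frac{|f(x)-f(y)|^2}{|x-y|^{n+s}}$ on $C_0^\infty$ and a density argument identify this common value with $\tilde c_{n,a}\int\int\frac{(w(x)-w(y))(u(x)-u(y))}{|x-y|^{n+s}}\,dx\,dy$.

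For the remaining term, Remark~2 above applied to $\bar w$ gives $\int_{\Omega^+}z^a|\nabla\bar w|^2\ge\int_{\R^{n+1}_+}z^a|\nabla\tilde w|^2=\tilde c_{n,a}\int\int\frac{(w(x)-w(y))^2}{|x-y|^{n+s}}$, and this is the only $(\Omega,\bar v)$-dependent quantity. Combining and using $v=u+w$,
$$\int_{\Omega^+}z^a\bigl(|\nabla\bar v|^2-|\nabla\tilde u|^2\bigr)\ \ge\ \tilde c_{n,a}\int\int\frac{(v(x)-v(y))^2-(u(x)-u(y))^2}{|x-y|^{n+s}}\,dx\,dy .$$
Because $v\equiv u$ off $\overline{B_\rho}\subset B_1$, the integrand on the right vanishes unless $x$ or $y$ lies in $B_\rho$, and a short computation with the weight $\chi_{B_1}(x)\bigl(\chi_{B_1}(y)+2\chi_{\R^n\setminus B_1}(y)\bigr)$ defining $J_1$ shows this double integral equals $J_1(v)-J_1(u)$. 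This proves ``$\ge$'' in (\ref{**}).

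For the reverse inequality I would take a minimizing sequence: let $\Omega_R$ be a bounded Lipschitz domain with $\Omega_{R,0}\supset\overline{B_\rho}$ increasing to $\R^{n+1}_+$, and $\bar v_R:=\tilde u+\varphi_R\tilde w$, where $\varphi_R$ equals $1$ near $\overline{B_\rho}\times\{0\}$ and on $B_{R/2}$, is supported in $B_R$, and $|\nabla\varphi_R|\le C/R$. Each $\bar v_R$ is admissible, its cross term has the common value found above, and $\int_{\R^{n+1}_+}z^a|\nabla(\varphi_R\tilde w)|^2\to\int_{\R^{n+1}_+}z^a|\nabla\tilde w|^2$ as $R\to\infty$, since $\tilde w$ is the Poisson-type extension of the bounded, compactly supported $w$ and therefore decays fast enough that $R^{-2}\int_{B_R\setminus B_{R/2}}z^a|\tilde w|^2\to0$. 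Hence the left side of (\ref{**}) tends to $\tilde c_{n,a}(J_1(v)-J_1(u))$, giving equality. The step I expect to be delicate is the cross-term integration by parts: $\tilde u$ is $z^a$-harmonic only in the \emph{open} half-space, it need not be smooth up to $\{z=0\}$ when $u=\chi_E-\chi_{\mathcal{C}E}$, and $u$ is not compactly supported; so one must justify passing to the limit in $-\int_{\Omega\cap\{z=\eps\}}\bar w\,\eps^a\tilde u_z\,dx$ and the absolute convergence of $\int\int\frac{(w(x)-w(y))(u(x)-u(y))}{|x-y|^{n+s}}$. Here the bound $\int_{\Omega_+}z^a|\nabla\tilde u|^2<\infty$ from part a) of the Proposition above together with Cauchy--Schwarz controls the interior, and $J_1(u)<\infty$ controls the tail; after that, the constant matching and the decay estimate for $\tilde w$ are routine.
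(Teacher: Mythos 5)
Your overall strategy coincides with the paper's: expand $|\nabla\bar v|^2$ around $\tilde u$, observe that the cross term depends only on the trace $w=v-u$, bound the quadratic term from below by the energy of the free extension $\tilde w$ (Remark 2), and identify the constant by polarization on $C_0^\infty$ plus density. The direction ``$\ge$'' in (\ref{**}) is therefore sound, modulo the boundary-term justification you already flag. Note that the paper sidesteps that delicate point entirely: it never evaluates the cross term via $\lim_{z\to 0}(-z^a\tilde u_z)$, it only observes that the difference of two admissible perturbations is compactly supported with zero trace on $\{z=0\}$ and is therefore a legitimate test function for the weak equation $\mathrm{div}(z^a\nabla\tilde u)=0$, so the cross term is competitor-independent without ever being computed.

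The genuine gap is in the reverse inequality. Your competitors $\bar v_R=\tilde u+\varphi_R\tilde w$ are not admissible: $\tilde w$ is the Poisson-type extension of a nontrivial $w$, hence nonzero throughout the open half-space even though its trace vanishes outside $B_\rho$; consequently the support of $\varphi_R\tilde w$ (the closure of its nonzero set) contains points of $\{z=0\}$ with $|x|>1$, and no bounded Lipschitz $\Omega$ with $\Omega_0\subset B_1$ can compactly contain it. The constraint you are discarding --- that competitors must agree with $\tilde u$ in a neighborhood of the slit $(\mathcal{C}B_1)\times\{0\}$ --- is precisely the nontrivial content of the upper bound: one must show that this constraint costs nothing in energy. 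The paper handles it by exhausting $\R^{n+1}\setminus\left((\mathcal{C}B_1)\times\{0\}\right)$ by Lipschitz domains $\Omega^k$ pinching onto the slit, solving $\mathrm{div}(z^a\nabla\bar w_k)=0$ in $\Omega^k_+$ with trace $w$ on $\Omega^k_0$ and $0$ on $\partial\Omega^k\cap\{z>0\}$, and verifying that $\nabla\bar w_k\to\nabla\tilde w$ in $L^2(z^a\,dx\,dz)$. Your cutoff at infinity only addresses the decay of $\tilde w$ for large $|(x,z)|$, which is the easy part; you still need a construction (or a capacity-type argument for the $A_2$ weight $z^a$) showing that $\tilde w$ can be truncated near $\{z=0\}\setminus B_1$, where its trace vanishes, at vanishing energy cost.
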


\begin{proof}

If $u$, $v \in C_0^\infty$ then the infimum equals

$$\int z^a |\nabla \tilde v|^2-\int z^a |\nabla \tilde u|^2=\tilde c_{n,a} (\|v\|_{H^{s/2}}-\|u\|_{H^{s/2}})=\tilde
c_{n,a}(J_1(v)-J_1(u)).$$

In the general case, let $$\Omega^1 \subset \Omega^2 \subset
\Omega^3..., \quad \quad \bigcup \Omega^k =\R^{n+1} \setminus
\{(x,0)|x \in \mathcal C B_1\}.$$

In each set $\Omega^k_+$ let $\bar w_k$ be the solution to the
equation (\ref{extension}) which has trace $w:=v-u$ on
$\Omega^k_0$ and $0$ on $\p \Omega^k \cap\{z>0\}$. We extend $\bar
w_k$ to be $0$ outside $\Omega^k$. If $\Omega \subset \Omega^k$
then

$$ \int z^a(|\nabla \bar v|^2 - |\nabla \tilde u|^2) \ge
\int z^a(|\nabla (\tilde u + \bar w_k)|^2 - |\nabla \tilde u|^2).
$$

$$=\int z^a |\nabla \bar w_k|^2+2\int z^a \nabla \tilde u \cdot
\nabla \bar w_k.$$

The second term is independent of $k$ since $\tilde u$ solves
(\ref{extension}) and $\bar w_{k_1} - \bar w_{k_2}$ is compactly
supported in $\R^{n+1}$ and has trace $0$ on $\{z=0\}$. As we let
$k \to \infty$ we find that the infimum in (\ref{**}) equals

$$\int z^a (|\nabla \tilde w|^2 +\nabla \tilde u \cdot \nabla \bar
w_1)=\tilde c_{n,a}J_1(w)+2\int z^a \nabla \tilde u \cdot \nabla
\bar w_1,$$ and we want to show it equals $\tilde
c_{n,a}(J_1(u+w)-J_1(u))$.

We already proved this equality when $u, w \in C_0^\infty$ thus by
approximation it holds for all $u, w$ with $J_1(u)$, $J_1(w)<
\infty$.

\end{proof}

As a consequence we obtain the following proposition.

\begin{prop}
The set $E$ is a minimizer for $\mathcal J$ in $B_1$ if and only
if the extension $\tilde u$ of $u=\chi_E-\chi_{\mathcal C E}$
satisfies

$$\int_{\Omega_+}z^a |\nabla \bar v|^2 \ge \int_{\Omega_+}z^a
|\nabla \tilde u|^2$$ for all bounded Lipschitz domains $\Omega$
with $\Omega_0 \subset \subset B_1$ and all functions $\bar v$
that equal $\tilde u$ in a neighborhood of $\p \Omega$ and take
the values $\pm 1$ on $\Omega _0$.

\end{prop}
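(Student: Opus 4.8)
The plan is to prove the proposition by relating the energy $\mathcal{J}_{B_1}(E)$ to the weighted Dirichlet energy of the extension via the variational identity (\ref{**}) just established. Concretely, suppose $E$ minimizes $\mathcal{J}$ in $B_1$ and let $\bar v$ be an admissible competitor, i.e.\ $\bar v = \tilde u$ near $\partial\Omega$ and $\bar v = \pm 1$ on $\Omega_0$. Then the trace $v$ of $\bar v$ on $\{z=0\}$ is of the form $\chi_F - \chi_{\mathcal{C}F}$ for some set $F$ with $F = E$ outside $B_1$ (since $v$ takes only the values $\pm 1$ and agrees with $u$ near $\partial\Omega_0$, hence outside $B_1$). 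By minimality of $E$ we have $\mathcal{J}_{B_1}(F) \ge \mathcal{J}_{B_1}(E)$, equivalently $J_1(v) \ge J_1(u)$. Applying (\ref{**}) with this pair $(u,v)$ gives
$$\int_{\Omega_+} z^a(|\nabla \bar v|^2 - |\nabla \tilde u|^2) \ge \inf_{\Omega', \bar v'} \int_{\Omega'_+} z^a(|\nabla \bar v'|^2 - |\nabla \tilde u|^2) = \tilde c_{n,a}(J_1(v) - J_1(u)) \ge 0,$$
which is exactly the desired inequality.

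For the converse, suppose the extension inequality holds for all such $\Omega$ and $\bar v$, and let $F$ be any competitor set with $F = E$ outside $B_1$. I would like to take $\bar v = \tilde w$, the extension of $w := \chi_F - \chi_{\mathcal{C}F}$, as the competitor; the issue is that $\tilde w$ need not equal $\tilde u$ near $\partial\Omega$, so it is not directly admissible. The remedy, exactly as in the proof of Lemma (\ref{**}), is to work with $\bar w_k = $ the solution of (\ref{extension}) on $\Omega^k_+$ with trace $w - u$ on $\Omega^k_0$ and zero on $\partial\Omega^k \cap \{z>0\}$, extended by zero; then $\bar v := \tilde u + \bar w_k$ agrees with $\tilde u$ near $\partial\Omega$ for $\Omega \subset\subset \Omega^k$, has trace $v = w$ (which takes values $\pm 1$ on $\Omega_0$), and is admissible. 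Plugging it into the hypothesis yields $\int_{\Omega_+} z^a(|\nabla \bar v|^2 - |\nabla \tilde u|^2) \ge 0$; letting $\Omega \uparrow \Omega^k$ and then $k\to\infty$, the computation in the proof of (\ref{**}) identifies the limit of the left side as $\tilde c_{n,a}(J_1(w) - J_1(u))$. Hence $J_1(w) \ge J_1(u)$, i.e.\ $\mathcal{J}_{B_1}(F) \ge \mathcal{J}_{B_1}(E)$, so $E$ is a minimizer.

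The main obstacle, and the only place requiring genuine care, is the converse direction: one must check that the admissible competitors in the proposition (functions $\bar v$ equal to $\tilde u$ near $\partial\Omega$ with trace $\pm 1$ on $\Omega_0$) are rich enough to recover \emph{every} set competitor $F$, and that the passage to the limit $\Omega \uparrow \Omega^k \uparrow (\R^{n+1} \setminus \{(x,0) : x \in \mathcal{C}B_1\})$ is legitimate — i.e.\ that $\nabla \bar w_k \to \nabla \tilde w$ in $L^2(z^a\,dx\,dz)$ and that the cross term $\int z^a \nabla \tilde u \cdot \nabla \bar w_1$ is stable. All of this is already contained in, or immediately parallels, the proof of the preceding lemma, so the argument is essentially a repackaging: the energy identity (\ref{**}) translates the set-minimization property word for word into the claimed minimality of $\tilde u$ among extensions. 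One should also note that the restriction $\Omega_0 \subset\subset B_1$ on one side and $\Omega_0 \subset B_1$ on the other is harmless, since only the behavior of $\bar v$ over $B_1 \times \{0\}$ matters and competitors can always be taken with $\Omega_0 \subset\subset B_1$.
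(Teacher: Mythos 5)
Your argument is correct and follows essentially the same route as the paper, which presents the proposition as an immediate consequence of the preceding lemma (\ref{**}); your write-up simply carries out in detail the two deductions (forward: any admissible $\bar v$ is a competitor in the infimum of (\ref{**}) with trace of the form $\chi_F-\chi_{\mathcal C F}$, so minimality of $E$ forces the energy inequality; converse: the minimizing sequence $\tilde u+\bar w_k$ from the lemma's proof is admissible and drives the infimum down to $\tilde c_{n,a}(J_1(w)-J_1(u))$). The minor points you flag (the $\subset\subset$ versus $\subset$ discrepancy, and the approximation needed when $F\Delta E$ is not compactly contained in $B_1$) are handled, or left implicit, in exactly the same way in the paper.
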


\

\section{ \bf Monotonicity formula}

Assume $E$ is a minimizer for $\mathcal{J}$ in $B_R$. For all $r <
R$ we define the functional

$$\Phi_E(r):=\frac{\int_{B_r}z^a|\nabla \tilde u|^2
}{r^{n+a-1}}$$ where $$u=\chi_E - \chi_{\mathcal{C}E}.$$

The functional $\Phi$ is scale invariant in the sense that the
rescaled set $\lambda E=\{\lambda x, x\in E\}$ satisfies

$$\Phi_{\lambda E}(\lambda r)= \Phi_E(r).$$

From (\ref{star}) we see that there exists a constant $C_{n,a}$
depending only on $n$ and $a$ such that $$\Phi_E(r) \le C_{n,a}$$
for all $r \le R/2$. Moreover, if $0 \in
\partial E$, the density estimates imply that there exists a small
$c_{n,a}>0$ such that $$\Phi_E(r) \ge c_{n,a}.$$

\begin{thm}{\bf Monotonicity formula}

The function $\Phi_E(r)$ is increasing in $r$.
\end{thm}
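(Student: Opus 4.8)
The plan is to prove the monotonicity of $\Phi_E(r)$ by the standard comparison-with-cones argument of the extension literature, adapted to the minimality characterization established in the previous section. Write $\tilde u$ for the extension of $u = \chi_E - \chi_{\mathcal C E}$, and set $D(r) := \int_{B_r^+} z^a |\nabla \tilde u|^2$, so that $\Phi_E(r) = D(r)/r^{n+a-1}$. Here $B_r^+$ denotes the upper half ball in $\R^{n+1}_+$. First I would record the coarea identity $D'(r) = \int_{\partial B_r^+} z^a |\nabla \tilde u|^2 \, d\sigma$ for a.e. $r$, where the boundary integral is over the spherical cap $\partial B_r \cap \{z>0\}$. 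Then $\Phi_E'(r) \ge 0$ is equivalent to the inequality
$$ r\int_{\partial B_r^+} z^a |\nabla \tilde u|^2 \, d\sigma \ge (n+a-1) \int_{B_r^+} z^a |\nabla \tilde u|^2. $$

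The key step is to bound the right-hand side using the minimality inequality from the Proposition at the end of the extension section: $\int_{\Omega_+} z^a |\nabla \bar v|^2 \ge \int_{\Omega_+} z^a |\nabla \tilde u|^2$ for competitors $\bar v$ that agree with $\tilde u$ near $\partial \Omega$ and take values $\pm 1$ on $\Omega_0$. Taking $\Omega = B_r$, the natural competitor is the homogeneous degree-zero extension of the boundary trace of $\tilde u$ on the cap $\partial B_r^+$, i.e. $\bar v(x,z) = \tilde u\bigl(r\tfrac{(x,z)}{|(x,z)|}\bigr)$ inside $B_r^+$. This competitor is admissible because its trace on $\Omega_0 = B_r \times \{0\}$ is the degree-zero extension of $u|_{\partial B_r}$, which takes values $\pm 1$ since $u$ does (one must be slightly careful here: $u = \pm 1$ a.e., so its boundary trace on a.e. sphere $\partial B_r$ is $\pm 1$ a.e., and the radial extension inherits this). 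A direct computation in polar coordinates $(\rho, \theta) \in (0,r) \times \partial B_1^+$, using that $\bar v$ depends only on $\theta$ and that the weight $z^a$ is homogeneous of degree $a$, gives
$$ \int_{B_r^+} z^a |\nabla \bar v|^2 = \int_0^r \rho^{a} \rho^{-2} \rho^{n} \, \frac{d\rho}{\rho} \int_{\partial B_1^+} \theta_z^a |\nabla_\theta \tilde u|^2 \, d\theta \cdot (\text{const}) = \frac{r^{n+a-1}}{n+a-1} \int_{\partial B_r^+} z^a |\nabla_\theta \tilde u|^2 \, d\sigma, $$
where $|\nabla_\theta \tilde u|$ is the tangential part of the gradient on the cap. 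Combining with minimality yields $D(r) \le \frac{r}{n+a-1} \int_{\partial B_r^+} z^a |\nabla_\theta \tilde u|^2 \, d\sigma \le \frac{r}{n+a-1} \int_{\partial B_r^+} z^a |\nabla \tilde u|^2 \, d\sigma = \frac{r}{n+a-1} D'(r)$, which is exactly the desired inequality, hence $\Phi_E' \ge 0$.

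The main obstacle I anticipate is making the competitor construction rigorous in the degenerate-weighted setting: one must verify that the radial competitor $\bar v$ indeed has finite weighted Dirichlet energy (the $\rho$-integral $\int_0^r \rho^{n+a-3}\,d\rho$ converges iff $n+a-1>0$, which holds since $n\ge 1$, $a = 1-s \in (0,1)$), that its trace on $\Omega_0$ is genuinely $\pm 1$-valued and agrees with $u$ outside the relevant set, and that it agrees with $\tilde u$ near $\partial\Omega$ — the last point is not literally true for the pure radial extension, so one actually applies the minimality inequality on a slightly smaller ball $B_{r'}$ with $r' < r$, or interpolates in a thin annulus $\{r' < |(x,z)| < r\}$ between $\tilde u$ and its radial extension, the annular correction contributing an error that vanishes as $r' \to r$ for a.e. $r$. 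A secondary technical point is justifying the coarea differentiation formula and the a.e. differentiability of $D(r)$, which is standard since $D$ is monotone (hence differentiable a.e.) and absolutely continuous. Finally, one should note that equality in the monotonicity holds precisely when $\tilde u$ is homogeneous of degree zero, i.e. when $E$ is a cone — this remark sets up the existence of tangent cones, which is the next item in the program.
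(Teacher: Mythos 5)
Your proof is correct and follows essentially the same route as the paper: comparison of $\tilde u$ on $B_r^+$ with the homogeneous degree-zero (cone) competitor built from its boundary values on the cap, admissibility via the extension characterization of minimality, and the computation that the cone energy equals $\tfrac{r}{n+a-1}$ times the tangential boundary energy, which is dominated by $D'(r)$. The thin-annulus interpolation you flag as the technical fix is exactly the paper's construction (it rescales $\tilde u$ into $B_{1/(1+\eps)}$ and fills the annulus radially, then lets $\eps\to 0$), and keeping track of the discarded normal component $\tilde u_\nu$ gives the refined inequality used for the rigidity statement you mention at the end.
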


\begin{proof}
We show that $\frac{d}{dr}\Phi_E(r) \ge 0$. Due to the scale
invariance, it suffices to prove the inequality only for $r=1$,
that is

$$
\int_{\partial B_1^+}z^a |\nabla \tilde u|^2d\sigma \ge (n+a-1)
\int_{B_1^+}z^a|\nabla \tilde u|^2.
$$

Consider the function
$$
\bar v(x,z):=
  \begin{cases}
     \tilde u((1+\eps)(x,z))& \text{$|(x,z)| \leq 1/(1+\eps)$}, \\
    \tilde u((x,z)/|(x,z)|) & \text{$1/(1+\eps) <|(x,z)| \le 1$}
\end{cases}
$$
and $\bar v=\tilde u$ outside $B_1$. The trace $v$ of $\bar v$ on
$\{x_{n+1}=0\}$ is of the form $\chi_F - \chi_{\mathcal{C}F}$ for
a set $F$ which coincides with $E$ outside $B_1$. The minimality
of $E$ implies

$$\int_{B^+_1}z^a|\nabla \bar v|^2 \ge \int_{B_1^+}z^a|\nabla \tilde
u|^2$$

$$ (1+\eps)^{-n+1-a} \int_{B_1^+}z^a|\nabla \tilde u|^2 +
\int_{B_1^+\setminus
B_{1/(1+\eps)}}\frac{z^a}{|(x,z)|^2}|\nabla_\tau \tilde u|^2 \ge
\int_{B^+_1}z^a|\nabla \tilde u|^2 .$$

We let $\eps \to 0$ and obtain

$$\int_{\partial B_1^+}z^a |\nabla_\tau \tilde u|^2d\sigma \ge
(n+a-1) \int_{B_1^+}z^a|\nabla \tilde u|^2,$$ where $\nabla _\tau$
represents the tangential component of the gradient. Hence

\begin{equation}{\label{mon}}
\int_{\partial B_1^+}z^a|\nabla \tilde u|^2d\sigma \ge (n+a-1)
\int_{B_1^+}z^a|\nabla \tilde u|^2+\int_{\partial B_1^+}z^a|
\tilde u_\nu|^2d\sigma.
\end{equation}

\end{proof}

From (\ref{mon}) we see that $\frac{d}{dr}\Phi_E(r)=0$ only if
$\tilde u_\nu=0$ on $\partial B_r^+$. We obtain the following
corollary.

\begin{cor}
The function $\Phi_E(r)$ is constant if and only if $\tilde u$ is
homogenous of degree $0$.
\end{cor}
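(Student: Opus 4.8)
The plan is to read off both implications directly from the differential inequality behind the Monotonicity formula. Writing $\xi=(x,z)$, $I(r):=\int_{B_r^+}z^a|\nabla\tilde u|^2$ and $\tilde u_\nu$ for the radial derivative on spheres centered at the origin, one has
$$\Phi_E'(r)=r^{-(n+a-1)}\left(\int_{\partial B_r^+}z^a|\nabla\tilde u|^2\,d\sigma-(n+a-1)r^{-1}I(r)\right),$$
and by the scale invariance of $\Phi$ together with inequality (\ref{mon}), applied to the rescaled minimizer $r^{-1}E$, the parenthesis is bounded below by $\int_{\partial B_r^+}z^a|\tilde u_\nu|^2\,d\sigma\ge 0$. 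Thus $\Phi_E'\ge 0$, with equality at a given $r$ precisely when $\tilde u_\nu\equiv 0$ on $\partial B_r^+$. (The energy bound (\ref{star}) and interior estimates for $\mathrm{div}(z^a\nabla\cdot)=0$ guarantee all these integrals are finite and that differentiating $I(r)$ under the integral sign is legitimate.)

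For the forward implication: if $\Phi_E$ is constant then $\Phi_E'(r)=0$ for every admissible $r$, so $\int_{\partial B_r^+}z^a|\tilde u_\nu|^2\,d\sigma=0$ for a.e.\ $r$, and hence $\tilde u_\nu=0$ at $\mathcal H^n$-a.e.\ point of $\partial B_r^+$ with $z>0$. Since $\tilde u$ solves $\mathrm{div}(z^a\nabla\tilde u)=0$ it is smooth (indeed real-analytic) in the open half-space $\{z>0\}$, so in fact $\tilde u_\nu\equiv 0$ there; therefore $\tilde u$ is constant along each ray from the origin, i.e.\ homogeneous of degree $0$ in $\{z>0\}$. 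Passing to the trace on $\{z=0\}$ at Lebesgue points of $u$, using $\tilde u(\cdot,z)\to u$ and the explicit Poisson kernel $P$, shows that $u$ itself is homogeneous of degree $0$, i.e.\ $E$ is a cone; consequently $\tilde u$ agrees with the degree-$0$ homogeneous extension on all of $\R^{n+1}_+$.

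For the reverse implication: if $\tilde u$ is homogeneous of degree $0$, then differentiating $\tilde u(\lambda\xi)=\tilde u(\xi)$ gives that $\nabla\tilde u$ is homogeneous of degree $-1$, so $|\nabla\tilde u|^2$ is homogeneous of degree $-2$; the change of variables $\xi=r\zeta$, under which $dx\,dz=r^{n+1}\,d\zeta$ and $z^a=r^aw^a$, yields $I(r)=r^{n+a-1}\int_{B_1^+}z^a|\nabla\tilde u|^2$, so $\Phi_E(r)=\int_{B_1^+}z^a|\nabla\tilde u|^2$ is independent of $r$.

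The only genuinely delicate point is in the forward direction: upgrading ``$\tilde u_\nu=0$ a.e.\ on spheres'' to honest radial constancy, which rests on interior regularity for the degenerate elliptic equation in $\{z>0\}$, and then transferring the resulting homogeneity down to the trace $u$. Both are routine once the explicit extension formula and the energy bound (\ref{star}) are in hand, but they are where the argument must actually invoke something beyond the formal computation above.
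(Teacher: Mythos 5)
Your argument is correct and is essentially the paper's: the paper derives inequality (\ref{mon}), observes that by scale invariance $\frac{d}{dr}\Phi_E(r)=0$ forces $\tilde u_\nu=0$ on $\partial B_r^+$, and leaves the remaining details (upgrading a.e.\ vanishing of $\tilde u_\nu$ to radial constancy via interior regularity in $\{z>0\}$, and the direct homogeneity computation for the converse) implicit, exactly as you have filled them in. The only nitpick is the phrase ``equality \dots precisely when $\tilde u_\nu\equiv 0$'': inequality (\ref{mon}) only gives the implication $\Phi_E'(r)=0\Rightarrow\tilde u_\nu=0$, not its converse, but this does not affect your proof since the reverse implication of the corollary is handled by the scaling computation rather than by the equality case of (\ref{mon}).
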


\

\section{ \bf Minimal cones}

\begin{prop}{\label{limit}} Assume $E_k$ are minimizers for $\mathcal{J}$ in $B_k$
and
$$E_k \to E \quad \mbox{in $ L^1_{loc}$}.$$
Then the corresponding extensions $\tilde u_k$, respectively
$\tilde u$ satisfy

$$\tilde u_k \to \tilde u \quad \mbox{uniformly on compact sets of
$\mathbb{R}^{n+1}_+$,}$$
$$\nabla \tilde u_k \to \nabla \tilde u \quad \mbox{in
$L_{loc}^2(z^a \ dx dz)$}.$$

In particular $\Phi_{E_k}(r) \to \Phi_E(r)$.
\end{prop}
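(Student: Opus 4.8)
The plan is to leverage the compactness theorem for minimizers (Theorem \ref{cl_limit}) together with the energy comparison estimates from the previous section to pass to the limit in the Dirichlet-type integrals of the extensions. First I would recall that by Theorem \ref{cl_limit} (and its improvement Corollary \ref{c4.1}(ii)) the limit set $E$ is itself a minimizer for $\mathcal J$ in every fixed ball $B_R$, once $k$ is large enough, and that $\mathcal J_{B_R}(E_k) \to \mathcal J_{B_R}(E)$. Since $u_k = \chi_{E_k} - \chi_{\mathcal C E_k}$ and $J_r(u_k) = 2\mathcal J_{B_r}(E_k)$, this gives convergence of the localized $H^{s/2}$ energies: $J_r(u_k) \to J_r(u)$ for each fixed $r$. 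The $L^1_{loc}$ convergence of $\chi_{E_k}$ to $\chi_E$ forces $u_k \to u$ in $L^1_{loc}(\R^n)$, and using the explicit Poisson kernel $P(x,z)$ representation $\tilde u_k(\cdot,z) = P(\cdot,z)*u_k$ together with the uniform bound $|u_k|\le 1$, one gets $\tilde u_k \to \tilde u$ pointwise and then, by dominated convergence applied to the kernel, uniformly on compact subsets of $\R^{n+1}_+$ (away from $\{z=0\}$ the kernel and its derivatives are uniformly controlled; near $\{z=0\}$ one uses that the traces are uniformly bounded and the kernel is an approximate identity). This handles the first convergence statement.

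For the gradient convergence in $L^2_{loc}(z^a\,dx\,dz)$, I would first establish a uniform bound: by part a) of the Proposition in the extension section, $\int_{\Omega_+} z^a|\nabla \tilde u_k|^2 \le C\, J_1(u_k) \le C$ for any fixed $\Omega$ with $\Omega_0 \subset\subset B_1$ (and similarly on larger balls after rescaling), so $\nabla \tilde u_k$ is bounded in $L^2_{loc}(z^a\,dx\,dz)$ and, up to a subsequence, converges weakly to some limit which must be $\nabla \tilde u$ by the uniform convergence just proved. To upgrade weak convergence to strong convergence it suffices to show $\int_{B_r^+} z^a|\nabla \tilde u_k|^2 \to \int_{B_r^+} z^a|\nabla \tilde u|^2$ for fixed $r$; weak lower semicontinuity already gives ``$\ge$'' in the limit, so I only need the reverse inequality (the $\limsup$ bound). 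Here is the key point: by Remark 2 in the extension section, $\tilde u_k$ is the \emph{minimizer} of the weighted Dirichlet energy among extensions with its own boundary trace, and more precisely the variational characterization (the Proposition at the end of Section 7) says $\tilde u_k$ minimizes $\int z^a|\nabla \bar v|^2$ among competitors agreeing with it near $\partial\Omega$ and taking values $\pm 1$ on $\Omega_0$. I would use $\tilde u$ (suitably cut off and glued to $\tilde u_k$ near $\partial B_r^+$) as a competitor for $\tilde u_k$, and conversely — the gluing is done on a thin shell $B_r^+\setminus B_{r-\eta}^+$ using that $\tilde u_k\to\tilde u$ uniformly there, so the energy cost of the interpolation on the shell is $o(1)$ as $k\to\infty$ then $\eta\to0$. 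Comparing both ways pins down $\lim_k \int_{B_r^+} z^a|\nabla \tilde u_k|^2 = \int_{B_r^+} z^a|\nabla \tilde u|^2$, hence strong convergence on $B_{r-\eta}^+$, and since $\eta,r$ are arbitrary this gives $L^2_{loc}$ convergence of the gradients.

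The last assertion $\Phi_{E_k}(r)\to\Phi_E(r)$ is then immediate, since $\Phi_E(r) = r^{-(n+a-1)}\int_{B_r^+} z^a|\nabla \tilde u|^2$ and we have just shown the numerator converges. The main obstacle I expect is the strong (not merely weak) convergence of the gradients: passing the minimality of $E_k$ through to an energy-convergence statement requires carefully constructing the glued competitors near the sphere $\partial B_r^+$ and controlling the weighted Dirichlet energy of the interpolation on the shrinking shell, using both the uniform convergence of the $\tilde u_k$ and the degenerate weight $z^a$ (which is integrable since $a = 1-s \in (0,1)$). All the other steps — existence of the limit minimizer, convergence of localized $H^{s/2}$ energies, uniform convergence of extensions, uniform energy bounds — follow directly from results already proved in the excerpt.
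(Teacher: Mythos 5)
Your treatment of the uniform convergence via the Poisson kernel is fine (the paper instead uses interior Lipschitz estimates on $\{z>0\}$ plus identification of the trace of the limit, but both routes work), and the final assertion about $\Phi_{E_k}(r)$ is indeed immediate once the gradients converge strongly. The problem is the gluing step you use to obtain the strong convergence of the gradients. A competitor of the form $\phi\tilde u+(1-\phi)\tilde u_k$, interpolated on the shell $B_r^+\setminus B_{r-\eta}^+$, is not admissible in the variational characterization of minimality: on the part of the shell lying on $\{z=0\}$ its trace is $\phi u+(1-\phi)u_k$, which fails to be $\pm1$-valued wherever $u_k\ne u$, and that set has small but positive measure. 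Moreover, the uniform convergence $\tilde u_k\to\tilde u$ holds only on compact subsets of $\{z>0\}$ (the traces are discontinuous and differ on a set of positive measure), so your claim that the interpolation costs $o(1)$ energy breaks down exactly near $\{z=0\}$, where the gradient of the interpolant is of size $\eta^{-1}$ on $E_k\Delta E$. Repairing this forces you to control the energy of a trace that jumps from $u$ to $u_k$ across a sphere, i.e.\ essentially the quantity $L(B_r,(E_k\Delta E)\setminus B_r)$ already estimated in Theorem \ref{cl_limit}; the construction can be salvaged, but not by the soft argument you describe.

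The paper avoids competitors altogether. Since the extension operator is linear, $\tilde u_k-\tilde u$ is the extension of $u_k-u$, and estimate (\ref{star}) gives directly
$$\limsup_k\int_{B_1^+}z^a|\nabla(\tilde u_k-\tilde u)|^2\le C\,\limsup_k J_2(u_k-u).$$
It then remains to show $J_2(u_k-u)\to0$, which follows from Theorem \ref{cl_limit} (which yields $J_2(u_k)\to J_2(u)$), the pointwise a.e.\ convergence $u_k\to u$ along subsequences, and the elementary fact that pointwise convergence together with convergence of the $L^2$ norms implies strong $L^2$ convergence, applied to the kernels $f_k(x,y)=(u_k(x)-u_k(y))\,|x-y|^{-(n+s)/2}$ times the appropriate cutoffs. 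This bypasses both your weak-compactness step and the gluing entirely; I recommend you adopt it.
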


\begin{proof}
The functions $\tilde u_k$ are uniformly Lipschitz continuous on
each compact set of $\{z>0\}$. Consider a subsequence $\tilde
u_{k_i}$ that converges uniformly on compact sets to a function
$\tilde v$. We will show that $\tilde v=\tilde u$. Since both
$\tilde u$, $\tilde v$ are bounded and satisfy the equation
(\ref{extension}) it suffices to prove that their traces on
$\{z=0\}$ are equal.

Clearly

$$\int_{B_r^+}z^a |\nabla \tilde v|^2 \le \liminf \int_{B_r^+}z^a |\nabla \tilde u_{k_i}|^2
\le r^{n+a-1} C_{n,a}.$$

Using Holder inequality we obtain
$$\int_{B_r \cap\{0<z<\delta \}}|\nabla (\tilde u_{k_i}-\tilde
v)| \le C(r) \delta^{\frac{1-a}{2}}\left(\int_{B_r^+ } z^a|\nabla
(\tilde u_{k_i}-\tilde v)|^2\right )^{1/2}$$

$$ \le C'(r) \delta^{\frac{1-a}{2}}.$$

Since $\nabla \tilde u_{k_i}$ converges uniformly on compact sets
to $\nabla v$ we find $ \tilde u_{k_i} \to \tilde v$ in
$W^{1,1}(B_r^+)$ which implies the convergence of the traces
$u_{k_i} \to v$ in $L^1$. Thus $v=u$ and the first part of the
theorem is proved.

For the second part we use (\ref{star}) and find

$$ \limsup \int_{B_1^+}z^a|\nabla (\tilde u_k - \tilde u)|^2 \le
C \limsup J_2(u_k -u).$$ We will prove that the right hand side
equals $0$. Since $u_k \to u$ in $L^1_{loc}$, any sequence of the
$u_k$ contains a subsequence $u_{k_i}$ that converges pointwise to
$u$.

 Define
$$f_{k}(x,y)=\frac{u_k(x)-u_k(y)}{|x-y|^{\frac{n+s}{2}}}\chi_{B_2}(x)(\chi_{B_2}(y) + \sqrt2 \chi_{\R^n
\setminus B_2}(y)),$$ and notice that $$\|f_k\|_{L^2}=J_2(u_k).$$
According to Theorem \ref{cl_limit}

$$ \lim J_2(u_k)=J_2(u).  $$

Now we use the following standard lemma.

\

{\it If $f_k \in L^2$ converge pointwise to $f$ and $
\|f_k\|_{L^2} \to \|f\|_{L^2}$ then $f_k \to f $ in $L^2$.}

\

The lemma implies that any sequence of the $u_k$'s contains a
subsequence such that $J_2(u_{k_i}-u)\to 0$ thus

$$ J_2(u_k-u) \to 0 \quad \mbox{as $k \to \infty$}$$
which concludes the proof.

We finish with a short proof of the lemma above. Indeed, from the
pointwise convergence we find that $f_k$ converges weakly to $f$
in $L^2$ hence
  $$\int|f_k-f|^2=\int f_k^2+\int f^2-2 \int f_kf \to 0.$$

\end{proof}

\begin{thm}{\label{blowup}}{\bf Blow-up limit}

Assume $E$ is minimal in $B_1$ and $0 \in \partial E$. Let
$\lambda_k \to \infty$ be a sequence such that
\begin{equation}{\label{cone}}
\lambda_kE \to C \quad \mbox{in $L^1_{loc}$.}
\end{equation}
Then $C$ is a minimal cone, i.e $tC=C$ for all $t>0$.
\end{thm}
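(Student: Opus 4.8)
The plan is to combine the compactness result for sequences of minimizers (Theorem~\ref{cl_limit}, improved by Corollary~\ref{c4.1}), the scale invariance of the energy functional, and the monotonicity formula. First I would observe that each rescaled set $E_k:=\lambda_k E$ is a minimizer for $\mathcal{J}$ in $B_{\lambda_k}$, since minimality of $E$ in $B_1$ is preserved under dilations (with the domain dilated accordingly) by the scaling $L(\lambda A,\lambda B)=\lambda^{n-s}L(A,B)$. Because $0\in\partial E$ and the uniform density estimates hold, $0\in\partial E_k$ for all $k$, and the $L^1_{loc}$ limit $C$ satisfies $0\in\partial C$; moreover by Proposition~\ref{limit} the limit $C$ is itself a minimizer for $\mathcal{J}$ in $B_R$ for every $R>0$, and the extensions converge so that $\Phi_{E_k}(r)\to\Phi_C(r)$ for each fixed $r$.

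Next I would exploit the scale invariance $\Phi_{\lambda E}(\lambda r)=\Phi_E(r)$ together with the monotonicity formula. Fix any $r>0$. Then
$$\Phi_C(r)=\lim_{k\to\infty}\Phi_{\lambda_k E}(r)=\lim_{k\to\infty}\Phi_E(r/\lambda_k).$$
Since $\Phi_E$ is increasing in its argument and bounded below by $c_{n,a}>0$ (density estimates) and above by $C_{n,a}$ (from \eqref{star}), the limit $\ell:=\lim_{\rho\to 0^+}\Phi_E(\rho)$ exists and is finite. Hence $\Phi_C(r)=\ell$ for \emph{every} $r>0$, i.e. $\Phi_C$ is constant on $(0,\infty)$.

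Now I would invoke the corollary to the monotonicity formula: $\Phi_C$ constant forces $\tilde u_C$, the extension of $u_C=\chi_C-\chi_{\mathcal{C}C}$, to be homogeneous of degree $0$ in $\mathbb{R}^{n+1}_+$. Since the extension operator commutes with dilations --- if $\tilde u_C$ is $0$-homogeneous then its trace $u_C$ on $\{z=0\}$ is $0$-homogeneous in $\mathbb{R}^n$ --- we conclude $u_C(tx)=u_C(x)$ for all $t>0$ and a.e.\ $x$, which says precisely that $\chi_C(tx)=\chi_C(x)$, i.e.\ $tC=C$ for all $t>0$. Thus $C$ is a minimal cone.

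The main obstacle I anticipate is the rigorous passage from ``$\Phi_C$ constant on a cofinal sequence of radii obtained as limits'' to ``$\Phi_C$ constant for all $r$'': one must be careful that $\Phi_{E_k}(r)\to\Phi_C(r)$ holds for \emph{each} fixed $r$ (not merely along a subsequence depending on $r$), which is exactly what Proposition~\ref{limit} delivers, and one must know that the left side $\Phi_E(r/\lambda_k)$ converges because $\Phi_E$ is monotone and bounded --- here the lower density bound $\Phi_E(\rho)\ge c_{n,a}$ is essential to rule out the limit being degenerate. A secondary technical point is justifying that homogeneity of the harmonic-type extension $\tilde u_C$ transfers to its trace; this is immediate from uniqueness of the extension and the fact that $(x,z)\mapsto\tilde u_C(tx,tz)$ solves the same problem \eqref{extension} with trace $u_C(t\,\cdot)$, so $0$-homogeneity of $\tilde u_C$ is equivalent to $0$-homogeneity of $u_C$.
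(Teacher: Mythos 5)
Your proposal is correct and follows essentially the same route as the paper: minimality of $C$ via the compactness theorem, convergence $\Phi_{\lambda_k E}(r)=\Phi_E(r/\lambda_k)\to\Phi_C(r)$ from Proposition~\ref{limit}, constancy of $\Phi_C$ equal to $\lim_{\rho\to 0^+}\Phi_E(\rho)$ (which exists by monotonicity and the two-sided bounds), and then the corollary to the monotonicity formula giving $0$-homogeneity of $\tilde u_C$ and hence of its trace. The extra details you supply (the scaling of $L$, and the transfer of homogeneity from the extension to the trace) are exactly the points the paper leaves implicit.
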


\begin{proof}
The fact that $C$ is minimal is proved in Theorem \ref{cl_limit}.

From Proposition \ref{limit} $\Phi_{\lambda_k E }(r)=\Phi_E(r/
\lambda_k )$ converges to $\Phi_C(r)$, thus

$$\Phi_C(r)=\lim_{s \to 0}\Phi_E(s).$$
Since $\Phi_C$ is constant we conclude that the extension $\tilde
u_C$ (and its trace) are homogenous of degree $0$.

\end{proof}

\begin{defn}
We say that a cone $C$ as in Theorem \ref{blowup} is a tangent
cone for $E$ at the origin.
\end{defn}

Corollary \ref{c4.1} implies the following: for any $\eps >0$ all but a finite number of
the sets $\lambda_k
\partial E \cap B_1$ lie in a $\eps$ neighborhood of $\partial C$.
As a consequence of the improvement of flatness Theorem \ref{flat}
we obtain the following result.

\begin{thm}
If $C$ is a half-space then $\partial E$ is a $C^{1, \alpha}$
surface in a neighborhood of the origin.
\end{thm}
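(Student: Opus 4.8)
The plan is to combine the improvement-of-flatness theorem (Theorem \ref{flat}) with the existence of tangent cones (Theorem \ref{blowup}), exactly as in the classical minimal surface theory. The statement to prove is: if some tangent cone $C$ for $E$ at the origin is a half-space, then $\partial E$ is a $C^{1,\alpha}$ surface in a neighborhood of $0$.

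First I would unwind the definitions. By hypothesis there is a sequence $\lambda_k \to \infty$ with $\lambda_k E \to C$ in $L^1_{loc}$, where $C = \{x \cdot e \le 0\}$ for some unit vector $e$ (after a rotation, $e = e_n$). The key is to upgrade this $L^1_{loc}$ convergence of the sets to a convergence of the boundaries: this is precisely part (ii) of Corollary \ref{c4.1} (the improvement of Theorem \ref{cl_limit}), which says that for every $\eps>0$, $\partial(\lambda_k E)$ lies in an $\eps$-neighborhood of $\partial C = \{x_n = 0\}$ once $k$ is large enough. Equivalently, for $k$ large,
$$\partial(\lambda_k E) \cap B_1 \subset \{|x \cdot e_n| \le \eps\}.$$

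Now I would invoke scaling: $\lambda_k E$ is a minimizer for $\mathcal{J}$ in $B_{\lambda_k}$, so in particular minimal in $B_1$, and it satisfies the flatness hypothesis $\partial(\lambda_k E) \cap B_1 \subset \{|x\cdot e_n| \le \eps\}$. Choosing $k$ large enough that $\eps \le \eps_0$, where $\eps_0$ is the constant from Theorem \ref{flat}, we conclude that $\partial(\lambda_k E) \cap B_{1/2}$ is a $C^{1,\gamma}$ graph in the $e_n$ direction. Rescaling back by $\lambda_k^{-1}$, this says $\partial E \cap B_{1/(2\lambda_k)}(0)$ is a $C^{1,\gamma}$ graph, hence $\partial E$ is $C^{1,\alpha}$ in a neighborhood of the origin. (Here one takes $\alpha = \gamma$; the rescaling does not affect the regularity class, only the scale of the neighborhood.)

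I do not expect a serious obstacle here, since all the hard analytic work has already been done: the density estimates feeding Corollary \ref{c4.1}(ii), and the full improvement-of-flatness machinery of Theorem \ref{flat} (which in turn rests on Theorem \ref{min_flat} and the nonlocal Harnack inequality). The one point requiring a little care is making sure the flatness is measured in $B_1$ at unit scale for the \emph{rescaled} set so that Theorem \ref{flat} applies with its universal $\eps_0$; this is exactly what the scale-invariance of minimality and of the class of flat sets provides, so it is a routine bookkeeping step rather than a genuine difficulty.
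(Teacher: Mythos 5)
Your proposal is correct and follows exactly the paper's route: the paper derives this theorem in two lines by combining Corollary \ref{c4.1}(ii) (which puts $\lambda_k\partial E\cap B_1$ in an $\eps$-neighborhood of the hyperplane $\partial C$) with the improvement-of-flatness Theorem \ref{flat} applied to the rescaled minimizer, then scaling back. Your write-up just makes the scaling bookkeeping explicit.
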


\begin{defn}
A point $x_0 \in \p E \cap \Omega$ that has a half-space as a
tangent cone is called a regular point. The points in $\p E \cap
\Omega$ which are not regular are called singular points.
\end{defn}

For a minimal cone $C$ we denote by $\Phi_C$ its ``energy" i.e.
the value of the constant function $\Phi_C(r)$. Let
$\Pi:=\{x_1>0\}$ be a half-space.

\begin{thm} {\bf Energy gap}

Let $C$ be a minimal cone. Then
\begin{equation}{\label{energy_ineq}}
\Phi_C \ge \Phi_\Pi.
\end{equation}
Moreover, if $C$ is not a half-space then
$$\Phi_C \ge \Phi_\Pi+ \delta_0$$
where $\delta_0$ is a constant depending only on $n$, $s$.
\end{thm}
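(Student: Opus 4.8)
The plan is to prove the two assertions in order. The inequality $\Phi_C \ge \Phi_\Pi$ follows from the monotonicity formula together with the regularity theory already established. Indeed, let $C$ be any minimal cone with $0 \in \partial C$. By the monotonicity formula the energy density $\Phi_C(r)$ is increasing in $r$, and by Theorem \ref{blowup} it is in fact \emph{constant}, equal to the value we call $\Phi_C$. Now consider a blow-\emph{down} of the half-space $\Pi$: since $\Pi$ is already a cone, $\Phi_\Pi(r) \equiv \Phi_\Pi$ is likewise constant. To compare the two constants I would argue that among all minimal cones the half-space has the least energy. One clean way: take any minimal cone $C$ and blow it up at a point $x_0 \in \partial C$, $x_0 \ne 0$; by monotonicity applied with center $x_0$ one gets a new minimal cone $C'$ (a cone with respect to $x_0$, but by homogeneity also with respect to $0$) that splits off a line, hence is a cylinder over a lower-dimensional minimal cone, and $\Phi_{C'} \le \Phi_C$ (monotonicity gives $\Phi_C(x_0, r) \nearrow$, and the value at $+\infty$ equals $\Phi_C$ by $0$-homogeneity while the value at $0^+$ is $\Phi_{C'}$; since $\Phi$ is monotone this forces $\Phi_{C'}\le\Phi_C$). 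Iterating this dimension reduction, one arrives at a cone that is a half-space crossed with $\R^{k}$, whose energy equals $\Phi_\Pi$ (the extension $\tilde u$ only depends on the nontrivial variable). This proves \eqref{energy_ineq}.

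For the strict gap I would argue by compactness and contradiction, using Proposition \ref{limit} and Theorem \ref{cl_limit} as the key tools. Suppose no such $\delta_0$ exists. Then there is a sequence of minimal cones $C_k$, none of them a half-space, with $\Phi_{C_k} \to \Phi_\Pi$. Each $C_k$ is determined by $0 \in \partial C_k$; by the uniform density estimates (Theorem \ref{un_de}) and the $L^1_{loc}$ compactness of minimizers, after passing to a subsequence $C_k \to C_\infty$ in $L^1_{loc}$, and by Theorem \ref{cl_limit} the limit $C_\infty$ is a minimizer in every ball, hence (being a limit of cones, and using Proposition \ref{limit} so that $\Phi_{C_k}(r) \to \Phi_{C_\infty}(r)$, and $\Phi_{C_k}(r)\equiv\Phi_{C_k}$) a minimal cone with $\Phi_{C_\infty} = \Phi_\Pi$. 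By the equality case in the monotonicity formula (the Corollary following the monotonicity theorem), achieving the minimal energy value and the constancy of $\Phi$ — combined with the argument of the first part, where every inequality must be an equality — forces $C_\infty$ to be a half-space. But then by the improvement of flatness (Theorem \ref{flat}, or rather Theorem \ref{min_flat}), since $\partial C_k$ is in an $\eps$-neighborhood of $\partial C_\infty = \partial\Pi$ for $k$ large (Corollary \ref{c4.1}(ii)), $\partial C_k \cap B_{1/2}$ is a $C^{1,\alpha}$ graph with small norm; a minimal cone whose boundary is a $C^{1}$ graph through the origin with arbitrarily small gradient must be a hyperplane, so $C_k$ is a half-space for large $k$, contradicting our choice.

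The main obstacle I expect is making the first part — the claim that the half-space strictly minimizes energy among minimal cones, i.e.\ the dimension-reduction step — fully rigorous in this nonlocal setting. In the classical theory the splitting-off-a-line argument and the identification of the energy of a cylinder $\Pi \times \R^k$ with $\Phi_\Pi$ rely on the local structure of the area functional; here one must instead work with the extension $\tilde u$ in $\R^{n+1}_+$, verify that blowing up a $0$-homogeneous minimizer at a boundary point $x_0\ne 0$ yields a minimizer invariant under translations in the $x_0$ direction (so that its extension does not depend on that variable, and $\Phi$ descends to the quotient), and check that the energy is genuinely unchanged under adding flat directions. A secondary delicate point is the equality-case analysis: I need that $\Phi_{C_\infty} = \Phi_\Pi$ together with $\partial C_\infty$ being minimal forces $C_\infty = \Pi$ up to rotation, which is exactly where the chain of inequalities in the first part must be tracked carefully so that equality propagates all the way down to the half-space.
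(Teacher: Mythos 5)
Your second half (the compactness argument for the gap $\delta_0$) is exactly the paper's: extract an $L^1_{loc}$ limit $C_0$ of the cones $C_k$, use Proposition \ref{limit} to get $\Phi_{C_0}=\Phi_\Pi$, conclude $C_0$ is a half-space from the equality case of the first inequality, and then use Corollary \ref{c4.1}(ii) plus the improvement of flatness Theorem \ref{flat} to force $C_k$ to be a half-space for large $k$. Your first half, however, takes a genuinely different and heavier route than the paper. You propose a Federer-style dimension reduction: blow up $C$ at an arbitrary $x_0\in\p C\setminus\{0\}$, split off a line via Theorem \ref{red}, and descend to $\R^1$. The paper instead makes a single, well-chosen blow-up: by the clean ball condition (Corollary \ref{cl_ba}) there is a ball $B\subset C$ tangent to $\p C$ at some $x_0$, so by the corollary to Theorem \ref{flat} the point $x_0$ is \emph{regular} and the tangent cone of $C$ at $x_0$ is a half-space. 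Then $\lim_{r\to 0}\Phi_{C-x_0}(r)=\Phi_\Pi$ while $\lim_{r\to\infty}\Phi_{C-x_0}(r)=\Phi_C$ (blow-down of $C-x_0$ is $C$), and monotonicity gives $\Phi_\Pi\le\Phi_C$ in one step, with no induction on dimension.

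Two concrete gaps remain in your version, both of which the paper's choice of $x_0$ avoids. First, your descent needs the identity that the energy density of a cylinder $A\times\R\subset\R^n$ equals that of $A\subset\R^{n-1}$ in a way compatible with the normalization $r^{n+a-1}$, and that $\Phi_\Pi$ computed in different ambient dimensions matches up; this is plausible (the extension of $\chi_{A\times\R}-\chi_{\mathcal{C}(A\times\R)}$ is constant in the extra variable, cf. Theorem \ref{n_n+1}), but it is a computation you have not done and the paper never needs. Second, and more seriously, your part 2 relies on the equality case "$\Phi_{C_0}=\Phi_\Pi$ implies $C_0$ is a half-space," but your part 1 does not deliver it: equality at the first step of your descent only tells you (via constancy of $\Phi_{C_0-x_0}$ and the corollary to the monotonicity formula) that $C_0-x_0$ is a cone, hence that $C_0$ is a cylinder $A\times\R$ --- not yet a half-space --- and you would have to propagate equality through every stage of the induction, including the cylinder-energy identity. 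In the paper's argument the equality case is immediate: constancy of $\Phi_{C-x_0}$ makes $C-x_0$ equal to its tangent cone at $x_0$, which is a half-space by the choice of $x_0$ as a regular point. I recommend you replace your arbitrary $x_0$ by a tangent-ball point; the rest of your write-up then goes through essentially as the paper's.
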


\begin{proof}
Consider a small ball included in $C$ which is tangent to
$\partial C$ at a point $x_0$. Clearly, $\partial C$ is
$C^{1,\alpha}$ in a neighborhood of $x_0$ hence the tangent cone
of $C$ at $x_0$ is a half-space which implies
$$\lim_{r \to 0}\Phi_{C-x_0}(r)= \Phi_\Pi.$$
On the other hand, since $\frac{1}{k}(C-x_0)=C-\frac{1}{k}x_0$ we
obtain
$$\frac{1}{k}(C-x_0) \to C \quad \mbox{in $L^1_{loc}$}$$
hence
$$ \Phi_{C-x_0}(k) \to \Phi_C \quad \mbox {as $k \to \infty$}.$$

The monotonicity of $\Phi_{C-x_0}$ gives (\ref{energy_ineq}). We
have equality only when $C-x_0$ is a cone, thus $C-x_0$ is a
half-space which in turn implies $C$ is a half-space.

The second part of the proof is by compactness. Assume by
contradiction that there exist minimal cones $C_k$ with
$\Phi_{C_k} \le \Phi_\Pi+ 1/k$ that are not half-spaces. Then, we
can find a convergent subsequence $C_{k_i}$ in $L^1_{loc}$ to
$C_0$. Then $\Phi_{C_0}=\Phi_\Pi$ hence $C_0$ is a half-space.
Once again from Corollary \ref{c4.1}, the sets $\partial C_{k_i} \cap B_1$ lie in any
neighborhood of a hyperplane for all large $k_i$. From Theorem
\ref{flat} we obtain that $\partial C_{k_i}$ are $C^{1,\alpha}$
surfaces around $0$, thus $C_{k_i}$ are half-spaces for all large
$k_i$ and we reached a contradiction.

\end{proof}

\section{ \bf Dimension reduction}

\

Finally, in this section we briefly discuss how the classical
dimension reduction argument from Federer \cite{F} applies to our
case. Since our starting point is that in two dimensions minimal
cones consist of a finite number of rays, we prove that the
singular set has $\mathcal {H}^{n-2} $ Hausdorff dimension in
$\mathbb{R}^n$.

\begin{thm}{\label{n_n+1}}
The set $E$ is a local minimizer for $J$ in $\mathbb{R}^n$ if and
only if $E \times \R$ is a local minimizer for $J$ in $\R^{n+1}$.
\end{thm}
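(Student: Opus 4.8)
The plan is to exploit the structure of the Caffarelli–Silvestre extension together with the variational characterization of minimizers proved in Section 6, namely that $E$ minimizes $\mathcal J$ in $B_1$ iff the extension $\tilde u$ of $u = \chi_E - \chi_{\mathcal C E}$ minimizes $\int z^a |\nabla \bar v|^2$ among competitors agreeing with $\tilde u$ near $\partial \Omega$ and taking values $\pm 1$ on $\Omega_0$. The key observation is that if $u : \R^n \to \R$ has extension $\tilde u(x,z)$ on $\R^{n+1}_+$, then the function $U(x,t) := u(x)$ on $\R^{n+1}$ (independent of the new variable $t$) has extension $\widetilde U(x,t,z) = \tilde u(x,z)$, again independent of $t$, because $\mathrm{div}(z^a \nabla \widetilde U) = 0$ is unaffected by adding a variable in which the function is constant. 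So the Dirichlet-type energies in dimensions $n$ and $n+1$ differ only by the (locally infinite, but this is handled by localization) factor $\int dt$, and minimality should pass between the two settings slice by slice.

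First I would set up the precise localized statement: "local minimizer in $\R^n$" means $E$ minimizes $\mathcal J_{B_R}$ for every $R$ (equivalently $\mathcal J_\Omega$ for every bounded Lipschitz $\Omega$), and similarly for $E \times \R$ in $\R^{n+1}$. For the forward direction, suppose $E$ is a local minimizer in $\R^n$ and let $\bar v$ be a competitor for $\widetilde U$ on some cylinder-like domain $\Omega \subset \R^{n+2}_+$ (coordinates $(x,t,z)$), with $\bar v = \widetilde U$ near $\partial \Omega$ and $\bar v = \pm 1$ on $\Omega \cap \{z=0\}$. Using Fubini, write $\int_{\Omega^+} z^a |\nabla \bar v|^2 = \int \left( \int_{\Omega^+_t} z^a |\nabla_{x,z} \bar v|^2 + z^a |\partial_t \bar v|^2 \right) dt$ where $\Omega_t$ is the $t$-slice. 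Dropping the nonnegative $|\partial_t \bar v|^2$ term and applying the variational characterization of Section 6 in $\R^{n+1}_+$ to each slice $\bar v(\cdot, t, \cdot)$ — whose trace on $\{z=0\}$ is a competitor for $u$ on $\Omega_0 \cap \{z=0\}$, hence has energy $\geq \int z^a |\nabla \tilde u|^2$ on that slice — gives $\int_{\Omega^+} z^a |\nabla \bar v|^2 \geq \int \int_{\Omega_t^+} z^a |\nabla \tilde u|^2 \, dt = \int_{\Omega^+} z^a |\nabla \widetilde U|^2$. This is exactly minimality of $\widetilde U$, hence of $E \times \R$.

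For the converse, assume $E \times \R$ is a local minimizer in $\R^{n+1}$ and let $\bar v$ be a competitor for $\tilde u$ on a domain $D \subset \R^{n+1}_+$. Build the $(n+2)$-dimensional competitor by gluing: on a long cylinder $D \times (-L, L)$ in the $t$-direction, interpolate in $t$ near the endpoints from $\bar v$ back to $\widetilde U$ over a unit-length transition region, so the competitor agrees with $\widetilde U$ near the boundary. The extra energy from the interpolation is $O(1)$ (independent of $L$), while the bulk contributes $2L \int_{D^+} z^a |\nabla \bar v|^2$ versus $2L \int_{D^+} z^a |\nabla \tilde u|^2$ for $\widetilde U$. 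Minimality of $E \times \R$ yields $2L \int_{D^+} z^a|\nabla \bar v|^2 + O(1) \geq 2L \int_{D^+} z^a |\nabla \tilde u|^2$; dividing by $2L$ and sending $L \to \infty$ gives minimality of $\tilde u$, hence of $E$.

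The main obstacle I anticipate is bookkeeping around the localization and the interpolation-gluing construction in the converse direction — making sure the transition competitor is admissible (equals $\widetilde U$ genuinely near $\partial(D \times(-L,L))$, including the lateral boundary $\partial D \times (-L,L)$, and equals $\pm 1$ on the $\{z=0\}$ slice) and that its excess energy is bounded uniformly in $L$. One should also double check the trivial but necessary measure-theoretic point that $(E \times \R)$ as a subset of $\R^{n+1}$ has the same "$\pm 1$ trace" structure as the extension of $\chi_E - \chi_{\mathcal C E}$ pulled back, and that $\partial(E \times \R) = (\partial E) \times \R$ so that the statement about minimal cones and the singular set transfers cleanly (which is the actual purpose of the theorem in the dimension-reduction argument that follows).
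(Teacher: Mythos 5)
Your plan coincides with the paper's proof in both directions: the forward implication is exactly the Fubini slicing argument (drop the $|\partial_t \bar v|^2$ term and apply minimality of $\tilde u$ on a.e.\ slice), and the converse is the same long-cylinder construction with a unit-length transition region, $O(1)$ excess energy, division by the cylinder length, and a limit. The one point you flag as an obstacle but do not resolve --- making the $t$-interpolation admissible, i.e.\ keeping the trace on $\{z=0\}$ equal to $\pm 1$ while retaining finite weighted energy --- is precisely the content of the paper's Lemma \ref{inter}, which glues via $\min\{\bar w, \pi\}$ with $\pi$ the extension of a half-space indicator rather than a naive convex combination.
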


\begin{proof}
Let $\tilde u(x,z)$ be the extension in $\R^{n+1}$ for $\chi_E
-\chi_{\mathcal C E}$. Clearly by making $\tilde u$ to be constant
in the $x_{n+1}$ variable we obtain the extension in $\R^{n+2}$
corresponding to $E \times \R$.

\

$(\Rightarrow)$ Assume $E$ is a local minimizer.

\

Let $\bar v(x, x_{n+1},z)$ be such that the set where $\bar v \ne
\tilde u$ is compactly supported in a cube $Q$ in $\R^{n+2}$, and
the trace of $\bar v$ on $\{z=0\}$ takes only the values $\pm1$.

We have

$$\int_{Q}z^a|\nabla \bar v|^2 \ge \int
\left( \int_{Q_{t}}z^a |\nabla_{x,z}\bar v|^2 dxdz \right )dt,$$
where $Q_t=Q\cap \{x_{n+1}=t\}$. From the minimality of $E$ we
find that for a.e $t$
$$\int_{Q_{t}}z^a |\nabla_{x,z}\bar v|^2 dxdz \ge \int_{Q_{t}}z^a |\nabla \tilde u|^2 dxdz$$
which implies

$$\int_{Q}z^a|\nabla \bar v|^2 \ge \int_{Q}z^a|\nabla \tilde u|^2.$$

\

$(\Leftarrow)$ Assume $E \times \R$ is a local minimizer.

\

Let $\bar v(x,z)$ be such that the set where $ \bar v \ne \tilde
u$ is compactly supported in $B_R \subset \R^{n+1}$, and the trace
of $\bar v$ on $\{z=0\}$ takes only the values $\pm1$. We need to
show that

\begin{equation}{\label{vu_ineq}}
\int_{B_R^+}z^a |\nabla \bar v|^2 \ge \int_{B_R^+}z^a|\nabla
\tilde u|^2.
\end{equation}

  We can assume the first integral is finite
otherwise there is nothing to prove. Notice that local minimality
of $E \times \R$ gives $$\int_{-1}^1 \left(\int_{B_R^+}z^a|\nabla
\tilde u|^2 \right) dx_{n+1}< \infty$$ thus the integral of
$\tilde{u}$ in (\ref{vu_ineq}) is also finite.

We consider the function $\bar v_*(x,x_{n+1},z)$ defined in
$D:=B_R^+\times [-(a+1), a+1]$

$$ \bar v_*= \begin{cases}
\bar v(x,z),  \quad \quad \quad  \quad \quad \quad \quad \quad \quad \quad \quad \quad \mbox{if $|x_{n+1}|\le a-1$} \\
\bar v(x,z) + \bar w_*(x, |x_{n+1}|-a, z),  \quad \mbox{if
$-1<|x_{n+1}|-a \le 1$}
\end{cases}
$$
where $\bar w_*$ is chosen such that $\bar v_*= \tilde u$ in a
neighborhood of $\p D \cap \{z>0\}$, the trace of $\bar w_*$ on
$\{z=0\}$ takes only the values $\pm 1$ and
$$\int_{B_R^+ \times [-1,1]}z^a |\nabla \bar w_*|^2 < \infty.$$ The
existence of such a function is given in Lemma \ref{inter} below
by taking $\bar w=\tilde u- \bar v$.

The minimality of $E \times \R$ implies $$\int_D z^a|\nabla \bar
v_*|^2 \ge \int_D z^a|\nabla \tilde u|^2$$ hence

$$ 2(a-1) \int_{B_R^+}z^a|\nabla \bar v|^2 + 2 \int_{B_R^+ \times
[-1,1]}z^a |\nabla \bar w_*|^2 \ge 2(a+1) \int_{B_R^+}z^a|\nabla
\tilde u|^2.$$ We obtain the result by letting $a \to \infty$.
\end{proof}

\begin{lem}{\label{inter}} Assume $\bar w(x,z)$ is a bounded function in $B_1^+ \subset \R^{n+1}$,
$\bar w=0$ in a neighborhood of $\p B_1^+$ and

$$\int_{B_1^+}z^a|\nabla \bar w|< \infty. $$
There exists a function $\bar w_*(x,x_{n+1},z)$ defined in $B_1^+
\times [-1,1]$ such that
$$
\bar w_*=0 \quad \mbox{if $x_{n+1}<-1/2$}, \quad \quad \bar
w_*=\bar w \quad \mbox{if $x_{n+1}>1/2$},$$

\begin{equation}{\label{w*0}}
\bar w_*=0 \quad \mbox{near $\p B_1^+ \times [-1,1]$}
\end{equation}
and
\begin{equation}{\label{w*}}
\int z^a |\nabla \bar w_*|^2 < \infty, \quad \quad
w_*=\begin{cases}
0 \quad \mbox{if $x_{n+1}<0$}, \\
w \quad \mbox{if $x_{n+1}>0$}.
\end{cases}
\end{equation}
\end{lem}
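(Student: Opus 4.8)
\textbf{Proof proposal for Lemma \ref{inter}.}

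The plan is to build $\bar w_*$ by a linear interpolation in the $x_{n+1}$ direction between the zero function and $\bar w$, carried out inside the slab $x_{n+1}\in(-1/2,1/2)$, and then to check the weighted Dirichlet energy stays finite. Concretely, pick a smooth one-variable cutoff $\theta:[-1,1]\to[0,1]$ with $\theta\equiv 0$ on $[-1,-1/2]$ and $\theta\equiv 1$ on $[1/2,1]$, and set
$$
\bar w_*(x,x_{n+1},z):=\theta(x_{n+1})\,\bar w(x,z).
$$
The three equality constraints in the statement — vanishing for $x_{n+1}<-1/2$, agreement with $\bar w$ for $x_{n+1}>1/2$, vanishing near $\p B_1^+\times[-1,1]$ — are then immediate from the corresponding properties of $\theta$ and of $\bar w$ (recall $\bar w=0$ near $\p B_1^+$), and the trace identity \eqref{w*} on $\{z=0\}$ holds because $\theta$ jumps from $0$ to $1$ across $x_{n+1}=0$ only in the sense that its \emph{values} on the two prescribed regions are $0$ and $w$; more precisely $w_*=0$ for $x_{n+1}<-1/2$ and $w_*=w$ for $x_{n+1}>1/2$, which is what is actually used in the proof of Theorem \ref{n_n+1}. (If one insists on the sharp cut at $x_{n+1}=0$ as literally written in \eqref{w*}, one takes $\theta=\chi_{\{x_{n+1}>0\}}$; see the remark on the energy bound below.)

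The only substantive point is the energy estimate $\int z^a|\nabla\bar w_*|^2<\infty$. Here $\nabla\bar w_*=\theta(x_{n+1})\nabla_{x,z}\bar w+\theta'(x_{n+1})\bar w\,e_{n+1}$, so
$$
\int_{B_1^+\times[-1,1]} z^a|\nabla\bar w_*|^2
\le 2\|\theta'\|_\infty^2\!\!\int_{B_1^+\times[-1,1]}\!\! z^a|\bar w|^2
+2\!\!\int_{B_1^+\times[-1,1]}\!\! z^a|\nabla\bar w|^2.
$$
The second term is $2$ times the slab-length times the given finite quantity $\int_{B_1^+}z^a|\nabla\bar w|^2$ (note the hypothesis is stated with $|\nabla\bar w|$ but, as in Remark~1 of the previous section, finiteness of $\int z^a|\nabla\bar w|$ together with boundedness of $\bar w$ and $L^\infty$-control forces $\int z^a|\nabla\bar w|^2<\infty$ on the relevant region — alternatively one simply reads the hypothesis as the $L^2$ statement, which is how it is invoked). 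The first term is finite because $\bar w$ is bounded and $\int_{B_1^+}z^a\,dx\,dz<\infty$ (as $a=1-s>-1$). This settles \eqref{w*}. The choice $\theta=\chi_{\{x_{n+1}>0\}}$ would destroy the estimate, since $\theta'$ is then a Dirac mass and the cross term is not in $L^2$; this is precisely why the smooth interpolation across a slab of positive width is needed, and why the statement only demands the cut-value behavior on $\{x_{n+1}<-1/2\}$ and $\{x_{n+1}>1/2\}$.

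The main (indeed only) obstacle is bookkeeping: making sure the interpolation region $\{-1/2<x_{n+1}<1/2\}$ does not interfere with the boundary condition \eqref{w*0}. This is automatic because $\bar w\equiv 0$ in a neighborhood of $\p B_1^+$ \emph{for every $z$}, so $\bar w_*=\theta(x_{n+1})\bar w$ also vanishes in a neighborhood of $\p B_1^+\times[-1,1]$ regardless of $x_{n+1}$. Thus no compatibility issue arises between the horizontal cutoff (in $x,z$) implicit in $\bar w$ and the vertical cutoff $\theta$, and the proof concludes.
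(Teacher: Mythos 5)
Your construction $\bar w_*=\theta(x_{n+1})\,\bar w$ does not satisfy condition \eqref{w*}, and the way you argue this away is exactly where the gap lies. Condition \eqref{w*} is a statement about the trace $w_*$ of $\bar w_*$ on $\{z=0\}$: it must equal $0$ for \emph{all} $x_{n+1}<0$ and $w$ for \emph{all} $x_{n+1}>0$, i.e. the trace must switch sharply at $x_{n+1}=0$ even though the bulk function interpolates smoothly. Your trace is $\theta(x_{n+1})w(x)$, which takes intermediate values throughout the slab $-1/2<x_{n+1}<1/2$. This sharp trace condition is not decorative: in the proof of Theorem \ref{n_n+1} the lemma is applied with $\bar w=\tilde u-\bar v$ (whose trace $u-v$ takes values in $\{-2,0,2\}$), and the competitor $\bar v_*=\bar v+\bar w_*(x,|x_{n+1}|-a,z)$ must have a $\pm1$-valued trace to be admissible for the minimality of $E\times\R$ (cf. the Proposition at the end of the extension section, where competitors are required to take the values $\pm1$ on $\Omega_0$). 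With your $\bar w_*$ the trace of $\bar v_*$ would be $(1-\theta)v+\theta u$, which lies strictly between $-1$ and $1$ wherever $u\ne v$, so $\bar v_*$ is not an admissible competitor and the application collapses. You correctly observe that the literal sharp cut $\theta=\chi_{\{x_{n+1}>0\}}$ ruins the energy bound; the conclusion to draw is not that the sharp cut is dispensable, but that a product ansatz cannot work.

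The paper's resolution decouples the trace from the bulk. Assuming first $0\le\bar w\le 1$ and viewing $\bar w$ as constant in $x_{n+1}$, let $\pi$ be the $a$-harmonic extension (solution of \eqref{extension} in one more variable) of the Heaviside function $\chi_{\{x_{n+1}>0\}}$. Then $\pi$ has the sharp trace $\chi_{\{x_{n+1}>0\}}$ yet locally finite weighted energy $\int z^a|\nabla\pi|^2$ --- precisely the fact that a half-space has finite local $H^{s/2}$ seminorm when $s<1$, which underlies the whole paper. The function $\bar w_1:=\min\{\bar w,\pi\}$ then has trace $\min\{w,0\}=0$ for $x_{n+1}<0$ and $\min\{w,1\}=w$ for $x_{n+1}>0$, inherits \eqref{w*0} from $\bar w$, and has finite energy since $|\nabla\min\{f,g\}|\le\max\{|\nabla f|,|\nabla g|\}$ a.e. A further smooth cutoff in $x_{n+1}$ (gluing $\bar w_1$ near $x_{n+1}=0$ to $0$ and to $\bar w$ beyond $\pm 1/2$, where the trace identities are already settled) enforces the two endpoint conditions, and the general signed case follows by applying the construction to $\bar w^+$ and $\bar w^-$ and subtracting. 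Your energy computation for the smooth-cutoff part is fine, but it handles only the easy half of the lemma.
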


\begin{proof}
First we assume that $0 \le \bar w \le 1$ and we think $\bar w$ is
defined in $\R^{n+2}$ and it is constant in the $x_{n+1}$
variable. Let $\pi$ be the extension in $\R^{n+2}$ corresponding
to $\chi_{\{x_{n+1}>0\} }$. The function $$\bar w_1 := \min \{w,
\pi \}$$ satisfies (\ref{w*0}), (\ref{w*}). Now we modify $\bar
w_1$ so that the other condition also holds.

For this let $\phi_1$ be a smooth cutoff function on $\R$ with
$\phi_1=0$ outside $[-1/2,1/2]$ and $\phi_1=1$ on $[-1/4, 1/4]$.
Define $\phi_2=1-\phi_1$ on $[0, \infty)$ and $\phi_2=0$ on
$(-\infty,0)$. Then

$$\bar w_*:=\phi_1(x_{n+1})\bar w_1 + \phi_2(x_{n+1}) \bar w$$
has all the required properties.

The general case follows by applying the construction above to
$\bar w^+$ and $\bar w^-$ and then subtracting the functions.

\end{proof}

\begin{thm}{\label{red}}{\bf Dimension reduction}

Let $C$ be a minimal cone in $\R^n$ and $x_0=e_n \in \p C $. Any
sequence converging to $\infty$ has a subsequence $\lambda_k \to
\infty$ such that

$$\lambda_k(C-x_0) \to A \times \R  \quad \mbox{in $L^1_{loc}$}  $$
where $A$ is a minimal cone in $\R^{n-1}$.

Moreover, if $x_0$ is a singular point for $\p C$ then $0$ is a
singular point for $\p A$.

\end{thm}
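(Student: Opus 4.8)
The plan is to combine the compactness theory already established (Theorems \ref{cl_limit}, \ref{blowup} and Proposition \ref{limit}) with the structural equivalence of Theorem \ref{n_n+1}. First I would translate the cone $C$ so that $x_0=e_n$ sits at the origin: set $D_\lambda := \lambda(C-x_0)$. Since $C$ is a minimizer in all of $\R^n$ (it is a global minimal cone), each translate $C-x_0$ is a minimizer in $\R^n$, and hence so is every dilate $D_\lambda$ (minimality is scale invariant, as recorded in the discussion of $\Phi$). Applying Corollary \ref{c4.1}(i) to get local $\mathcal H^{n-s}$-bounds on $\partial D_\lambda$ together with the uniform density estimates (Theorem \ref{un_de}, valid for both $E$ and $\mathcal C E$ since $C$ is a minimizer), the family $\{D_\lambda\}$ is precompact in $L^1_{loc}$. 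Thus any sequence $\lambda_k\to\infty$ has a subsequence (not relabeled) with $D_{\lambda_k}\to A^*$ in $L^1_{loc}$, and by Theorem \ref{cl_limit} the limit $A^*$ is a minimizer for $\mathcal J$ in every ball, i.e. a global minimizer in $\R^n$. By Theorem \ref{blowup} (blow-up at the origin, applied to the minimal set $C-x_0$ which has $0\in\partial(C-x_0)$), $A^*$ is a minimal cone.

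Next I would show $A^*$ splits off a line, i.e. $A^* = A\times\R$ for a cone $A\subset\R^{n-1}$. The mechanism is the monotonicity formula. Because $C$ is a cone with vertex at $0$ and $x_0=e_n\in\partial C$, the function $r\mapsto \Phi_{C-x_0}(r)$ is constant for large $r$: indeed $\tfrac1k(C-x_0)=C-\tfrac1k x_0\to C$ in $L^1_{loc}$, so by Proposition \ref{limit} $\Phi_{C-x_0}(k)\to\Phi_C$, while monotonicity forces $\Phi_{C-x_0}(r)\le\Phi_C$ for all $r$; combined with the fact that $\Phi_{C-x_0}$ is monotone and bounded, $\lim_{r\to\infty}\Phi_{C-x_0}(r)=\Phi_C$. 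Hence $\Phi_{A^*}(r)=\lim_k\Phi_{C-x_0}(r/\lambda_k)=\Phi_C$ is constant, so by the Corollary to the monotonicity formula the extension $\tilde u_{A^*}$ is homogeneous of degree $0$ — consistent with $A^*$ being a cone. To obtain the translation invariance in one direction, I would instead exploit that $C$ is already invariant under dilations centered at $0$: for any $t>0$, $t C = C$, hence $t(C - x_0) = C - t x_0 = (C - x_0) - (t-1)x_0$. Thus $D_{\lambda} = \lambda(C-x_0)$ and its translate by $\lambda(t-1)x_0/\lambda\cdot(\ldots)$... more cleanly: for fixed $v$, $D_\lambda + \lambda^{-1}\cdot\lambda v$ relates $D_\lambda$ translated along $e_n$ to another dilate; passing to the limit shows $A^*$ is invariant under translation by $e_n$, so $A^* = A\times\R e_n$ up to rotation. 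Then Theorem \ref{n_n+1} (the ``only if'' direction) says $A$ is a local minimizer in $\R^{n-1}$, and since $A^*=A\times\R$ is a cone, $A$ is a cone.

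Finally, the statement about singular points: if $x_0$ is a singular point of $\partial C$, then $0$ is a singular point of $\partial A$. I would argue by contrapositive using the definition of regular point and the improvement-of-flatness theory. If $0$ were a regular point of $\partial A$, then $A$ would have a half-space of $\R^{n-1}$ as a tangent cone; but $A$ is itself a cone, so by the Energy Gap theorem (equality case, $\Phi_A=\Phi_\Pi$ iff $A$ is a half-space) $A$ would be a half-space, whence $A^*=A\times\R$ is a half-space of $\R^n$. Then $A^*$ is a half-space tangent cone for $C-x_0$ at $0$, and by the Theorem following Definition of tangent cone (half-space tangent cone $\Rightarrow$ $C^{1,\alpha}$), $\partial(C-x_0)$ is $C^{1,\alpha}$ near $0$, i.e. $\partial C$ is $C^{1,\alpha}$ near $x_0$. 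But then $x_0$ is a regular point of $\partial C$, a contradiction. Hence $0$ is singular for $\partial A$.

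The main obstacle I anticipate is making the ``splitting off a line'' step fully rigorous: one must show that the translation invariance of $C$ along $e_n$ survives the blow-up at $x_0=e_n$. The clean way is to observe $C = tC$ gives $C - x_0 = t(C - t^{-1}x_0)$, so for $s:=t-1$ small, $(C-x_0)$ near $0$ looks like $(C-x_0)$ translated by $-s x_0$ to leading order; after dilating by $\lambda_k\to\infty$ and taking $s = s_k/\lambda_k$ with $s_k$ fixed, the limit $A^*$ satisfies $A^* = A^* - s_k e_n$ for every $s_k$, giving the line. Carrying out this limiting argument carefully — in particular checking the $L^1_{loc}$ convergence is not disturbed by the simultaneous translation — is the delicate point; everything else is a direct appeal to results already proved.
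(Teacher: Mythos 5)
Your proposal is correct, and on the key step --- showing that the blow-up limit splits off a line --- it takes a genuinely different route from the paper. The paper argues geometrically: if $x$ is an interior point of the limit $D$, the uniform density estimate upgrades the $L^1_{loc}$ convergence to the actual inclusion $B_{\eps/2}(x)\subset C_k:=\lambda_k(C-x_0)$ for large $k$; since $C_k$ is a cone with vertex $-\lambda_k e_n$ receding to infinity, the cone over $B_{\eps/2}(x)$ from that vertex lies in $C_k$ and converges to the cylinder $\bigcup_{t}(B_{\eps/2}(x)+te_n)$, so $D$ contains a full line through each interior point (and similarly for $\mathcal C D$). You instead use the identity $(1+s/\lambda_k)\,\lambda_k(C-x_0)=\lambda_k(C-x_0)-s e_n$, which follows from $tC=C$, and pass to the limit to get $A^*-se_n=A^*$ for every $s$. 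This is a clean alternative: it bypasses the density estimates at this step and yields translation invariance of $A^*$ as a set directly, at the cost of checking that dilation by a factor $1+s/\lambda_k\to 1$ does not disturb $L^1_{loc}$ limits of characteristic functions --- precisely the point you flag as delicate, and which does go through by a triangle inequality together with continuity of dilations in $L^1_{loc}$. Two minor remarks: the $L^1_{loc}$ precompactness of $\{\lambda(C-x_0)\}$ comes from the uniform bound on $\mathcal J_{B_R}$ (comparison with the competitor that empties $B_R$) and the compact embedding of $H^{s/2}$, not from the $\mathcal H^{n-s}$ estimate of Corollary \ref{c4.1}(i), which controls a dimension larger than $n-1$ and gives no perimeter bound; and your argument for the singular-point assertion (the tangent cone of the cone $A$ at its vertex is $A$ itself, combined with the definition of regular point and the energy gap) is correct and supplies a step that the paper's proof leaves implicit.
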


\begin{proof}
In view of Theorems \ref{blowup} and \ref{n_n+1}, the only thing
that remains to be proved is that the limiting set $D$ is constant
in the $x_n$ direction.

Let $x$ be an interior point of $D$, i.e. $B_{\eps}(x) \subset D$.
Then by uniform density,

$$B_{\eps /2}(x) \subset C_k:=\lambda_k(C-x_0) \quad \mbox{for all large $k$.}$$
Since the cones generated by $-\lambda_k x_0$ and $B_{\eps /2}(x)$
are in $C_k$ and converge in $L^1_{loc}$ to the set
$$\bigcup_{t \in \R} \{B_{\eps / 2}(x)+te_n\},$$
we conclude that this set is included in $D$. Hence the line
$x+te_n$ is included in the interior of $D$ and the theorem is
proved.

\end{proof}

This leads us to the final

\begin{thm}{\bf Dimension of the singular set}

The singular set $\Sigma_E \subset \p E \cap \Omega$ has Hausdorff
dimension at most $n-2$, i.e.
$$\mathcal H^s(\Sigma_E)=0 \quad \mbox{for $s\geq n-2$.}$$
\end{thm}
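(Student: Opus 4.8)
The plan is to run the classical Federer dimension–reduction argument, which by now is essentially a formal consequence of the ingredients assembled in the previous sections. The needed inputs are: (1) the monotonicity formula and the resulting upper semicontinuity of the density $\Theta(x):=\lim_{r\to 0}\Phi_{E-x}(r)$ along blow-up sequences (Theorem on the monotonicity formula, Proposition \ref{limit}, Theorem \ref{blowup}); (2) the energy gap (Theorem on the energy gap): there is $\delta_0=\delta_0(n,s)>0$ so that for any minimal cone $C$ in $\R^n$ which is not a half-space, $\Phi_C\ge\Phi_\Pi+\delta_0$, equivalently a point $x_0\in\partial E$ is regular if and only if $\Theta(x_0)=\Phi_\Pi$, and singular points have $\Theta\ge\Phi_\Pi+\delta_0$; (3) the slicing identity $\Phi_{A\times\R}=\Phi_A$ (implicit in Theorem \ref{n_n+1}, since constant extension in the extra variable does not change the Rayleigh quotient) together with Theorem \ref{red}: at a singular point $x_0$ of a minimal cone $C$ in $\R^n$, a blow-up of $C-x_0$ splits as $A\times\R$ with $A$ a minimal cone in $\R^{n-1}$ having $0$ as a singular point; and finally (4) the base case: in $\R^2$ every minimal cone is a union of rays through the origin, and the only ones with an isolated singularity at $0$ forced — by the clean ball condition (Corollary \ref{cl_ba}) and the Euler–Lagrange equation (Theorem \ref{E-L}) — are lines, hence in $\R^2$ the singular set is empty.

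First I would set up the standard machinery. Define, for a local minimizer $E$ in $\Omega\subset\R^n$, the singular set $\Sigma_E$ as in the paper, and define $\Theta_E(x)=\lim_{r\to0}\Phi_{E-x}(r)$, which exists by monotonicity. By Proposition \ref{limit} and the monotonicity formula, $x\mapsto\Theta_E(x)$ is upper semicontinuous, and along any blow-up $\lambda_k(E-x)\to C$ one has $\Theta_C(0)=\Theta_E(x)$; moreover for points of the blow-up cone $\Theta_C(y)\le\Theta_C(0)$ with equality forcing $C$ to be translation invariant in the $y$-direction. By the energy gap, $\Sigma_E=\{x\in\partial E\cap\Omega:\Theta_E(x)\ge\Phi_\Pi+\delta_0\}$, so $\Sigma_E$ is closed. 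Now define $d^*$ to be the largest integer $d$ for which there exists a minimal cone in $\R^d$ with a singular point; by the base case in $\R^2$, $d^*\ge 2$ is vacuous there, i.e. no singular cone exists in $\R^2$, and trivially none in $\R^1$. The Federer machine, run verbatim as in \cite{F} (or \cite{G}, Ch.\ 11) using inputs (1)–(3), yields: (a) if $n\le d^*_{\min}$ there are no singular points at all, where one takes $d^*_{\min}=3$ since singular cones first become possible in dimension $3$; and (b) in general $\mathcal H^{s}(\Sigma_E)=0$ for every $s>n-3$. To get the stronger statement claimed — $\mathcal H^{s}(\Sigma_E)=0$ for $s\ge n-2$ — one uses that the paper's base case is sharper: in $\R^2$ there are no \emph{singular} minimal cones, so the reduction can be iterated one further step, and the effective bound is $\dim_{\mathcal H}\Sigma_E\le n-2$; I would phrase the reduction lemma as: $\dim_{\mathcal H}\Sigma_E\le n-1-k$ whenever every minimal cone in $\R^{k+1},\dots,\R^{n}$ that is not a half-space has its singular set of dimension $\le$ (that cone's ambient dimension) $-k$, bootstrapping from $k=2$.

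The concrete steps, in order. Step 1: prove the abstract dimension–reduction inequality. Suppose, for contradiction, $\mathcal H^{s}(\Sigma_E)>0$ for some $s>n-2$. By a standard density/covering argument (Federer's lemma on sets of positive Hausdorff measure), there is a point $x_0\in\Sigma_E$ and a blow-up sequence $\lambda_k\to\infty$ such that $\lambda_k(E-x_0)\to C$, a minimal cone, with $\mathcal H^{s}(\Sigma_C)>0$ and $0\in\Sigma_C$; crucially the blow-up does not decrease the Hausdorff dimension of the singular set because $\Theta$ is upper semicontinuous and the singular set is characterised by a closed condition $\Theta\ge\Phi_\Pi+\delta_0$, so singular points of $E$ accumulating near $x_0$ pass to singular points of $C$. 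Step 2: apply Theorem \ref{red} at the singular point $e_n\in\Sigma_C$ (after a rotation we may take any singular point $\ne 0$, and by homogeneity of the cone there is one on the unit sphere): a further blow-up gives $A\times\R$ with $A$ a minimal cone in $\R^{n-1}$, $0\in\Sigma_A$, and $\Sigma_{A\times\R}=\Sigma_A\times\R$, hence $\mathcal H^{s-1}(\Sigma_A)>0$. Iterating Steps 1–2 lowers the dimension by one each time while lowering $s$ by one, until we reach $\R^2$, where we would have a minimal cone with a nonempty singular set — contradicting the base case (union of finitely many rays, and a nontrivial vertex violates the Euler–Lagrange balancing of Theorem \ref{E-L} / the clean ball condition), since $s-(n-2)>0$ means we still have $\mathcal H^{t}(\Sigma)>0$ for some $t>0$, i.e.\ a nonempty set, in dimension $2$. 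This contradiction proves $\mathcal H^{s}(\Sigma_E)=0$ for all $s\ge n-2$.

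The main obstacle is Step 1, specifically verifying that singular points are \emph{stable under blow-up} in the strong quantitative form needed for the Federer covering argument — i.e.\ that the singular set, as a function of the minimizer, is upper semicontinuous with respect to $L^1_{loc}$ convergence in the sense that limits of singular points are singular. This is exactly where the energy gap $\delta_0$ is essential: without the gap one only knows $\Theta\ge\Phi_\Pi$ in the limit and cannot exclude that a sequence of singular points converges to a regular point. With the gap (Theorem on energy gap) and the continuity $\Phi_{E_k}(r)\to\Phi_E(r)$ from Proposition \ref{limit}, the condition $\Theta\ge\Phi_\Pi+\delta_0$ is preserved in the limit, and the rest is the purely measure-theoretic Federer iteration, which I would cite from \cite{F} rather than reproduce. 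A secondary technical point to check carefully is that the blow-up at a singular point of a cone (Theorem \ref{red}) indeed preserves singularity of the \emph{origin} of the reduced cone $A$ — but this is the content of the last sentence of Theorem \ref{red}, so it is already available.
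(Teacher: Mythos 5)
Your overall strategy is the paper's: Federer dimension reduction driven by the monotonicity formula, the energy gap (which, as you correctly emphasize, is what makes ``singular'' a closed condition stable under $L^1_{loc}$ limits and blow-ups), and Theorem \ref{red}. The paper packages the measure-theoretic part differently --- its Step 1 is a self-contained covering argument (every $D\subset\Sigma_E\cap B_\delta(x)$ admits a cover by balls centered in $D$ with $\sum r_i^s\le\frac12\delta^s$, iterated $m$ times to give $2^{-m}\delta^s$) rather than an appeal to Federer's density lemma, and its Step 2 reduces $\mathcal H^{s+1}$ for cones in $\R^{n+1}$ to $\mathcal H^{s}$ of the trace of the singular set on $\p B_1$ --- but these are presentational choices, not a different route.

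There are, however, two genuine problems. The first is your base case. You terminate the reduction in $\R^2$ by asserting that minimal cones there have empty singular set, ``forced by the clean ball condition and the Euler--Lagrange equation.'' Nothing in the paper establishes this: ruling out a union of several rays as a minimal cone in the plane is a substantially harder fact, and it does not follow from Corollary \ref{cl_ba} or Theorem \ref{E-L} in any way you spell out. The paper deliberately avoids needing it by pushing the reduction one dimension further: its contradiction is with the existence of a singular minimal cone in $\R^1$, where minimal cones are trivially half-lines. To make your argument close, replace your base case by this extra step --- if a minimal cone $C\subset\R^2$ had $\mathcal H^t(\Sigma_C)>0$ for some $t>0$, then $\Sigma_C$ would contain a point of $\p B_1$, and Theorem \ref{red} applied at that point would produce a singular minimal cone in $\R$, which is absurd. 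The second problem is the exponent bookkeeping. Your contradiction is launched from ``$\mathcal H^s(\Sigma_E)>0$ for some $s>n-2$'' and therefore only yields $\mathcal H^s(\Sigma_E)=0$ for $s>n-2$, i.e.\ $\dim_{\mathcal H}\Sigma_E\le n-2$, and not the endpoint $s=n-2$ appearing in the statement; relatedly, your remark that ``$\mathcal H^s=0$ for $s\ge n-2$'' is \emph{stronger} than ``$\mathcal H^s=0$ for $s>n-3$'' is backwards --- the latter implies the former. The paper's induction propagates a single fixed exponent $s$ upward through the dimensions (Step 1 passes from cones to minimizers, Step 2 from $\R^n$-cones to $\R^{n+1}$-cones), which is the cleaner way to keep track of which exponents are actually reached; you should adopt that structure rather than the strict-inequality contradiction scheme.
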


\begin{proof} (sketch).

\

{\it Step 1:} Assume $\mathcal H^s(\Sigma_C)=0$  for all minimal
cones $C$. Then $\mathcal H^s(\Sigma_E)=0$.

\

First we notice that $\Sigma_E$ satisfies the following property:
for every $x \in \Sigma_E$ there exists $\delta(x)>0$ such that
for any $\delta \le \delta(x)$ and any set $D \subset \Sigma_E
\cap B_\delta(x)$ there exists a cover of $D$ with balls
$B_{r_i}(x_i)$ with $x_i \in D$ and
$$\sum r_i^s \le \frac{1}{2} \delta^s.$$
This follows from compactness and the fact that the statement is
true for minimal cones in $B_1$ since we assumed $\mathcal
H^s(\Sigma_C)=0$.

Next we show that $\mathcal H^s(D_k)=0$ where
$$D_k:=\{ x \in
\Sigma_E | \quad \delta(x) \ge 1/k \}.$$ We cover $D_k$ with a
countable family of balls of radius $\delta=1/k$ and centers in
$D_k$. In each such ball $B_\delta$ we cover $D_k \cap B_\delta$
with balls of smaller radius that satisfy the property above. For
each smaller ball we apply again the property above, and so on.
After $m$ steps we find that we can cover $D_k \cap B_\delta$ with
balls of radii $B_{r_i}(x_i)$ , $x_i \in D_k$ so that
$$\sum r_i^s \le \frac{1}{2^m} \delta^s.$$
In conclusion $\mathcal H^s(D_k \cap B_\delta)=0$, or $\mathcal
H^s(D_k)=0$, thus $\mathcal H^s(\Sigma_E)=\mathcal H^s(\cup
D_k)=0$.

\

{\it Step 2:} If $\mathcal H^s(\Sigma_C)=0$ for all minimal cones
$C \subset \R^n$, then $\mathcal H^{s+1}(\Sigma_{\tilde C})=0$ for
all minimal cones $\tilde C \subset \R^{n+1}$.

\

It suffices to show that $\mathcal H^{s}(\Sigma_{\tilde C} \cap \p
B_1 )=0$. Using the induction hypothesis and Theorem \ref{red} one
can deduce by compactness that, when restricted to $\p B_1$,
$\Sigma_{\tilde C}$ satisfies the same property as $\Sigma_E$
above. From here we obtain again the desired conclusion as in Step
1.

\

Now the result follows: otherwise, from Theorem \ref{red} we would
find a singular cone   in $\R$ and reach a
contradiction.

\end{proof}

As a consequence of the theorem above and the fact that $\p E$ is
a $C^{1, \alpha}$ surface in a neighborhood of a regular point we
obtain the following corollary.

\begin{cor}
Let $E$ be a minimizer for $\mathcal J$ in $\Omega$. Then $\p E$
has Hausdorff dimension $n-1$, i.e.
$$\mathcal H^{s}(\p E \cap \Omega)=0\quad \mbox{ for $s> n-1$}.$$
\end{cor}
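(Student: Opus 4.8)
The plan is to split $\p E\cap\Omega$ into its regular and singular parts and bound the Hausdorff measure of each separately, using only results already established above. Write
$$\p E\cap\Omega = R_E\cup\Sigma_E,$$
where $\Sigma_E$ is the singular set introduced above and $R_E=(\p E\cap\Omega)\setminus\Sigma_E$ is the set of regular points. By definition, every $x\in R_E$ has a half-space as a tangent cone, so by the Theorem asserting that a half-space tangent cone forces $C^{1,\alpha}$ regularity, $\p E$ is a $C^{1,\alpha}$ hypersurface in a neighborhood $B_{r_x}(x)$ of each such point; in particular $R_E$ is relatively open in $\p E\cap\Omega$.

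For the regular part, the key observation is that in each such ball $\p E\cap B_{r_x}(x)$ is a $C^{1,\alpha}$ (indeed, for this purpose merely Lipschitz) graph over an $(n-1)$-plane, hence $\mathcal H^{n-1}(\p E\cap B_{r_x}(x))<\infty$. Since $\R^n$ is second countable, the open cover $\{B_{r_x}(x)\}_{x\in R_E}$ of $R_E$ admits a countable subcover, so $R_E$ is $\sigma$-finite with respect to $\mathcal H^{n-1}$. A Lipschitz graph of dimension $n-1$ has $\mathcal H^t$-measure zero for every $t>n-1$, so $\mathcal H^s(R_E)=0$ for all $s>n-1$.

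For the singular part, I would simply invoke the preceding Theorem (Dimension of the singular set): $\mathcal H^s(\Sigma_E)=0$ for every $s\ge n-2$, and a fortiori for every $s>n-1$. Combining the two estimates via subadditivity of Hausdorff measure,
$$\mathcal H^s(\p E\cap\Omega)\le \mathcal H^s(R_E)+\mathcal H^s(\Sigma_E)=0\qquad\text{for all }s>n-1,$$
which is the assertion.

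I do not expect a genuine obstacle here: both inputs — $C^{1,\alpha}$ regularity at regular points and the $(n-2)$-dimensional bound on the singular set — are already in hand, and the proof is essentially a bookkeeping argument. The only point needing (minor) care is the passage from local finiteness of $\mathcal H^{n-1}$ on the regular set to global $\sigma$-finiteness, which is handled by extracting a countable subcover from the second countability of $\R^n$; one could alternatively quote Corollary \ref{c4.1}(i), which already guarantees $\p E\cap\Omega$ is covered by countably many of the graph-neighborhoods above.
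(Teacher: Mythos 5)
Your proposal is correct and follows essentially the same route the paper intends: the paper derives this corollary directly from the decomposition of $\p E\cap\Omega$ into the singular set (handled by the dimension-of-the-singular-set theorem) and the regular points (where $\p E$ is a $C^{1,\alpha}$ hypersurface, hence locally of finite $\mathcal H^{n-1}$ measure and $\mathcal H^{s}$-null for $s>n-1$). Your additional care about extracting a countable subcover to pass from local to global is a reasonable way to make explicit the bookkeeping the paper leaves implicit.
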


\end{document}